\theoremstyle{plain}
\newtheorem{theorem}{Theorem}[section]
\newtheorem{cor}[theorem]{Corollary}
\newtheorem{conj}[theorem]{Conjecture}
\newtheorem{prop}[theorem]{Proposition}
\newtheorem{lemma}[theorem]{Lemma}
\newtheorem{question}[theorem]{Question}
\newtheorem*{maintheorem}{Theorem~\ref{thm:main}}
\newtheorem*{ribbonfusion}{Theorem~\ref{thm:murasugisums}}
\newtheorem*{subgraphTRP}{Theorem~\ref{thm:subgraphTRP}}
\theoremstyle{definition}
\newtheorem{defn}[theorem]{Definition}
\newtheorem{remark}[theorem]{Remark}
\newcommand{\calF}{\ensuremath{{\mathcal F}}}
\newcommand{\comment}[1]{}
\newcommand{\bdry}{\ensuremath{\partial}}
\newcommand{\R}{\ensuremath{\mathbb{R}}}
\newcommand{\Z}{\ensuremath{\mathbb{Z}}}
\newcommand{\cut}{\ensuremath{\backslash}}
\newcommand{\tb}{\ensuremath{{\mbox{\tt tb}}}}
\newcommand{\rot}{\ensuremath{{\mbox{\tt rot}}}}
\definecolor{amaranth}{rgb}{0.9, 0.17, 0.31} 
\definecolor{carrotorange}{rgb}{0.93, 0.57, 0.13} 
\definecolor{citrine}{rgb}{0.89, 0.82, 0.04} 
\definecolor{dartmouthgreen}{rgb}{0.05, 0.5, 0.06} 
\definecolor{ballblue}{rgb}{0.13, 0.67, 0.8} 
\definecolor{ceruleanblue}{rgb}{0.16, 0.32, 0.75} 
\definecolor{amethyst}{rgb}{0.6, 0.4, 0.8} 
\definecolor{amber}{rgb}{1.0, 0.75, 0.0} 
\definecolor{burlywood}{rgb}{0.87, 0.72, 0.53} 
\title{Non-looseness of boundaries of Legendrian ribbons}
\author{Kenneth L. Baker}
\address{Department of Mathematics, 
University of Miami}   
\email{k.baker@math.miami.edu}  
\author{Sinem Onaran}
\address{Department of Mathematics,
Hacettepe University}   
\email{sonaran@hacettepe.edu.tr} 
\begin{document}

\begin{abstract}
Every null-homologous link in an oriented $3$-manifold is isotopic to the boundary of a ribbon of a Legendrian graph for any overtwisted contact structure.  However this is not the case if the boundary is required to be non-loose.   
Here, we define the `Tight Reattachment Property' for a Legendrian graph and show that it implies the boundary of its ribbon is non-loose.  We also discuss the applicability of this property and examine examples and constructions of Legendrian graphs with this property. 
\end{abstract}

\maketitle

\section{Introduction}

The notion that a link in $S^3$ is strongly quasipositive, SQP, has been extended to other $3$-manifolds via open books by several sets of authors in essentially the same way \cite{BEHKVHM, ItoKawamuro, Hayden-QPlinksandSteinsfces}. In turn, such links correspond to boundaries of ribbons of Legendrian graphs in the contact structure supported by the open book.  

In a contact $3$-manifold $(M,\xi)$, a graph $\Lambda$ is {\em Legendrian} if it is everywhere tangent to $\xi$.  Its {\em (Legendrian) ribbon}  $R(\Lambda)$ is an embedded compact surface that is tangent to $\xi$ along $\Lambda$, is otherwise transverse to $\xi$, and retracts to $\Lambda$ under a flow tangent to $\xi\vert_{R(\Lambda)}$.  As such $\bdry R(\Lambda)$, the boundary of the ribbon, is a transverse link.

Baader-Ishikawa show that quasipositive surfaces in $S^3$ correspond to ribbons of connected Legendrian graphs in the standard tight contact structure $(S^3, \xi_{std})$ \cite{BaaderIshikawa}. 
Consequently, strongly quasipositive links in $S^3$ are exactly those links that arise as the boundary of the ribbon of a connected Legendrian graph in $(S^3, \xi_{std})$.  
(See \cite{rudolph} for an overview of quasipositive surfaces and strongly quasipositive links.)

Generalizing this to other closed manifolds, Hayden showed that a link is SQP with respect to an open book if and only if it is the boundary of a ribbon of a Legendrian graph in the contact structure supported by the open book \cite[Theorem 1]{Hayden-LegribbonsandSQP}.  
However, while the property of being SQP  places significant restrictions on links in tight contact structures, every null-homologous link is the boundary of a Legendrian ribbon in an overtwisted contact structure on a closed $3$-manifold \cite{BCV-LegRibbonsinOTctctstr}.
Thus this notion of SQP for overtwisted contact structures is no longer discriminating the way it is for tight contact structures.

What enables this failure of discrimination is that the complement of the Legendrian graph is permitted to be overtwisted.  
To regain some of the strength of the theory of strongly quasipositive links in tight manifolds, we shift focus to  Legendrian graphs and their ribbons with tight complement.

\medskip
The term `non-loose' is typically reserved for subsets of overtwisted contact manifolds with tight complement, but we find it convenient to relax this constraint.
Throughout this article, let us say a subset of a contact $3$-manifold is {\em non-loose} if its complement is tight, even if the ambient contact structure is not overtwisted.  
Also, following \cite{stipsicz-vertesi}, we say a subset is {\em strongly non-loose} if its complement is tight and has no Giroux torsion.
With this terminology, subsets of tight manifolds are necessarily strongly non-loose.

As a starting point, 
one readily observes that a Legendrian graph 
$\Lambda$ and its ribbon $R(\Lambda)$ have regular neighborhoods with contactomorphic exteriors.  Consequently, if $\Lambda$ is loose if and only if $R(\Lambda)$ is too, and these further imply that $\bdry R(\Lambda)$ is loose as well.  Therefore we have:
\begin{lemma}\label{lem:loosenessofribbonsandspines}
For a Legendrian graph $\Lambda$,\\
\phantom{xxxxxxxx}
$\bdry R(\Lambda)$ is non-loose $\implies$ $\Lambda$ is non-loose $\iff$ $R(\Lambda)$ is non-loose.
\qed
\end{lemma}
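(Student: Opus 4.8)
The plan is to produce a single neighborhood that serves at once as a regular neighborhood of $\Lambda$ and of $R(\Lambda)$, and then to choose a tubular neighborhood of the transverse link $\bdry R(\Lambda)$ inside it. Once this is set up, the statement reduces to two elementary facts about tight contact structures: tightness is a contactomorphism invariant, and tightness passes to codimension-zero submanifolds (an overtwisted disk in a submanifold is also an overtwisted disk in the ambient manifold).

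To prove the equivalence $\Lambda$ non-loose $\iff$ $R(\Lambda)$ non-loose, I would argue as follows. By definition the ribbon $R(\Lambda)$ is a compact surface, tangent to $\xi$ along $\Lambda$, that retracts onto $\Lambda$ along a flow tangent to $\xi\vert_{R(\Lambda)}$; in particular it deformation retracts onto its spine $\Lambda$, and (being homotopy equivalent to a $1$-complex) it has nonempty boundary. I would take a closed regular neighborhood $N = \nu(R(\Lambda))$, the total space of an $I$-bundle over the compact surface $R(\Lambda)$, hence a disjoint union of handlebodies. Then $N$ collapses onto $R(\Lambda)$ and so onto $\Lambda$; being a compact $3$-manifold neighborhood of $\Lambda$ that collapses onto it, $N$ is equally a regular neighborhood of $\Lambda$. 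Thus $\Lambda$ and $R(\Lambda)$ share the exterior $E := M \setminus \Int N$, so the assertions ``the exterior of $\Lambda$ is tight'' and ``the exterior of $R(\Lambda)$ is tight'' are statements about the very same manifold $E$ and hence coincide. Since any two regular neighborhoods of the same subset are ambient isotopic, and thus have contactomorphic exteriors, this is independent of the choice of $N$, and the equivalence follows.

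For the implication $\bdry R(\Lambda)$ non-loose $\implies$ $\Lambda$ non-loose, I would use that $\bdry R(\Lambda) \subset R(\Lambda) \subset \Int N$, so that a tubular neighborhood $\nu(\bdry R(\Lambda))$ of the transverse link $\bdry R(\Lambda)$ may be chosen with $\nu(\bdry R(\Lambda)) \subseteq \Int N$. Then $E = M \setminus \Int N$ is a compact codimension-zero submanifold of the exterior $E' := M \setminus \nu(\bdry R(\Lambda))$ of $\bdry R(\Lambda)$. If $\bdry R(\Lambda)$ is non-loose then $E'$ is tight, hence its submanifold $E$ is tight, so $R(\Lambda)$ is non-loose, and therefore so is $\Lambda$ by the equivalence above. (Equivalently, in contrapositive form: an overtwisted disk in the complement of $\Lambda$ lies in $E \subseteq E'$ and so also witnesses looseness of $\bdry R(\Lambda)$.)

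I expect the only step with any genuine content to be the claim that one neighborhood works simultaneously for $\Lambda$ and $R(\Lambda)$, and this is precisely where the defining property of a ribbon --- retraction onto $\Lambda$ along a flow tangent to $\xi$ --- enters: it forces $R(\Lambda)$ to collapse onto $\Lambda$ inside every sufficiently small neighborhood, so that ``the exterior of $\Lambda$'' and ``the exterior of $R(\Lambda)$'' genuinely coincide up to ambient isotopy. The remaining ingredients --- nesting of neighborhoods, monotonicity of tightness under inclusion of codimension-zero submanifolds, and contactomorphism-invariance of tightness --- are all standard, so I foresee no real obstacle. (Since the statement concerns only non-looseness, Giroux torsion need not be discussed; the same argument nevertheless applies verbatim to strong non-looseness, a Giroux torsion domain being compact and hence embedding in $E' \supseteq E$ whenever it embeds in $E$.)
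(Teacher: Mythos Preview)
Your proposal is correct and follows essentially the same approach as the paper, which merely records the observation (in the sentence preceding the lemma) that $\Lambda$ and $R(\Lambda)$ have regular neighborhoods with contactomorphic exteriors, and that looseness of $\Lambda$ therefore implies looseness of $\bdry R(\Lambda)$. You have simply spelled out in detail what the paper leaves as a one-line remark.
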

However, the first implication doesn't immediately go the other way.
To what extent do we have the missing implication?
\begin{question}\label{ques:nonlooseboundaryofribbon}
If $\Lambda$ is a non-loose Legendrian graph, is the transverse link $\bdry R(\Lambda)$ also non-loose? 
\end{question}

As evidence for the possibility that there may exist an example of a non-loose Legendrian graph $\Lambda$ with $\bdry R(\Lambda)$ loose, we observe that there are non-loose Legendrian {\em knots} $\Lambda$ with positive transverse push-offs $T_+(\Lambda)$ that are loose.  However in this situation $\bdry R(\Lambda)$ is the double transverse push-off $T_+(\Lambda) \cup T_-(\Lambda)$ which is non-loose in all the cases we know.  Basic examples of this behavior come from the non-loose Legendrian unknots in an overtwisted $S^3$, see \cite{EliFr} (also presented in Theorem~\ref{thm:unknotclassification}) and the discussion in \cite{Etnyre-knotsinOTctctstr}. Similar examples can be found for other connected Legendrian graphs $\Lambda$ in which $\bdry R(\Lambda)$ is a transverse link of more than one component and some proper sublink is loose.

If the Legendrian graph $\Lambda$ is more than just non-loose but actually has universally tight exterior, then there is an affirmative answer to Question~\ref{ques:nonlooseboundaryofribbon}.  It follows from Convex Decomposition Theory and Theorem 2.7 of \cite{HKM-convexdecomptheory} that the Legendrian approximations of $\bdry R(\Lambda)$ all have universally tight exterior.  Hence $\bdry R(\Lambda)$ is non-loose.

Hedden and Tovstopyat-Nelip  give another sort of answer, showing that every null-homologous link in a closed oriented $3$-manifold is the non-loose boundary of a ribbon for some contact structure. 
\begin{theorem}[\cite{HTN}]
For any null-homologous link $L$ in a closed oriented $3$-manifold $M$, there is some contact structure $\xi$ on $M$ and Legendrian graph $\Lambda$ so that $\bdry R(\Lambda)$ is non-loose and isotopic to $L$.
\end{theorem}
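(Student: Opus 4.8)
The plan is to realize $L$ as the boundary of the ribbon of a Legendrian spine of a Seifert surface placed on a page of a suitably chosen open book of $M$, and then to exploit the freedom in the monodromy to make the exterior of $L$ tight. Since $L$ is null-homologous it bounds a connected Seifert surface $F\subset M$ with $\bdry F=L$. A standard fact --- obtained by starting from any open book of $M$ and performing positive stabilizations to enlarge the page until $F$ isotopes into it --- produces an open book $(\Sigma,\phi)$ of $M$ whose page $\Sigma$ is obtained from $F$ by attaching bands, so that in particular $F\subset\Sigma$. Let $\xi$ be the supported contact structure and $\Lambda\subset F$ a spine of $F$, so that $F$ is a regular neighborhood of $\Lambda$. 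Pushed onto $\Sigma$, the graph $\Lambda$ is Legendrian, and by the Baader--Ishikawa description of ribbons \cite{BaaderIshikawa} and its generalization by Hayden \cite{Hayden-LegribbonsandSQP}, the ribbon $R(\Lambda)$ is a collar of $\Lambda$ inside the page; hence $R(\Lambda)$ is isotopic to $F$ and $\bdry R(\Lambda)$ is isotopic to $\bdry F=L$. This much recovers the existence statement of \cite{BCV-LegRibbonsinOTctctstr}; the content of the theorem is entirely the non-looseness, i.e.\ the tightness of $M\setminus\nbhd(L)$, which is exactly the exterior of $\bdry R(\Lambda)$.

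To arrange that tightness we choose the monodromy $\phi$ carefully. The subsurface $F$ meets only one page, and a tubular neighborhood of the transverse link $L=\bdry F$ inside $M$ is a standard transverse neighborhood, hence automatically tight, so the only danger lies in the complementary region carried by $\Sigma\setminus\Int(F)$ and the monodromy. Precomposing $\phi$ with a sufficiently large product of right-handed Dehn twists along curves supported in $\Sigma\setminus\Int(F)$ --- equivalently, performing further positive stabilizations away from $F$ --- forces the open book to look, away from $\nbhd(L)$, like a piece of a Stein-fillable open book; the induced contact structure on $M\setminus\nbhd(L)$ is then tight (indeed universally tight), while whatever overtwistedness $\xi$ must carry is confined to $\nbhd(L)$. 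Hence $\bdry R(\Lambda)\simeq L$ is non-loose by definition; and since the exterior is in fact universally tight one may instead, if desired, invoke Theorem~2.7 of \cite{HKM-convexdecomptheory} as in the discussion before Question~\ref{ques:nonlooseboundaryofribbon}. The one global input this argument needs is that $M\setminus\nbhd(L)$ admit a tight contact structure at all, which holds, for instance, whenever $M$ itself does.

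The main obstacle is precisely this monodromy/tightness step: building or extending a contact structure across $\nbhd(F)\cong F\times[-1,1]$ is exactly where overtwisted disks can be created, and one must certify that every such disk is trapped inside $\nbhd(L)$. Making this rigorous calls for a partial-open-book or bypass description of $\xi$ relative to the dividing data imprinted on $\bdry\nbhd(F)$ by the tight complementary region, chosen so that a neighborhood of the convex surface $\bdry\nbhd(F)$ inside $\nbhd(F)\setminus L$ is essentially $I$-invariant and universally tight. A secondary bookkeeping point is to check the page framing of $\Lambda$ against the Seifert framing of $F$, so that $R(\Lambda)$ is genuinely isotopic to $F$ rather than to a twisted pushoff; this is routine once the stabilizations in the first step are chosen compatibly.
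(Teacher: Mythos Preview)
Your proposal has a genuine gap at exactly the point you yourself flag as ``the main obstacle.'' Precomposing the monodromy $\phi$ with right-handed Dehn twists along curves in $\Sigma\setminus\Int(F)$ changes the underlying manifold $M$, not just the contact structure, so you are no longer proving the theorem for the given $M$. If instead you mean positive \emph{stabilizations} away from $F$, those preserve $(M,\xi)$ exactly and therefore cannot convert an overtwisted complement into a tight one. There is no free monodromy parameter here: once $M$ and the embedding $F\subset\Sigma$ are fixed, the contact structure on $M\setminus\nbhd(L)$ is determined up to the choices already made, and you have given no mechanism for producing tightness. Relatedly, the sentence ``whatever overtwistedness $\xi$ must carry is confined to $\nbhd(L)$'' cannot be right, since a standard neighborhood of a transverse link is always tight; any overtwisted disk necessarily lives in the complement, which is precisely what you need to rule out. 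Finally, your closing caveat that the argument needs $M$ (or $M\setminus\nbhd(L)$) to admit a tight contact structure is an extra hypothesis absent from the statement.

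The paper's proof (following \cite{HTN}) takes a completely different route that actually produces the tight complement rather than hoping for it. One chooses a \emph{minimal genus} Seifert surface $F$ so that the complementary sutured manifold $(M_F,\gamma_F)$ is taut; Gabai then supplies a sutured manifold hierarchy, which Honda--Kazez--Mati\'c convert into a convex decomposition hierarchy with boundary-parallel dividing curves. Tightness of the terminal product pieces pulls back along the hierarchy to a tight contact structure on $M_F$ with the correct convex boundary, and regluing $R_+(\gamma_F)$ to $R_-(\gamma_F)$ yields the contact structure $\xi$ on $M$ together with the Legendrian graph $\Lambda$ whose ribbon is $F$. The minimal-genus hypothesis and the Gabai/HKM machinery are doing the essential work your sketch lacks.
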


\begin{proof}[Sketch of proof]
Given a minimal genus Seifert surface $F$ for $L$, the complementary sutured manifold $(M_F, \gamma_F)$ is taut and therefore admits a sutured manifold hierarchy \cite{gabai}. This hierarchy can be turned into a convex decomposition hierarchy \cite{HKM-convexdecomptheory} in which the convex decomposing surfaces all have non-nested boundary parallel dividing curves. Then tightness of the resulting product pieces can be pulled back to a tight contact structure on $M_F$ with convex boundary corresponding to the sutured structure.  Finally the positive and negative regions of the boundary, $R_+(\gamma_F)$ and $R_-(\gamma_F)$, may be viewed as ribbons of Legendrian graphs $\Lambda_+$ and $\Lambda_-$ that can be reglued to a Legendrian graph $\Lambda$ whose ribbon $R(\Lambda)$ is isotopic to $F$.  The tightness also pulls back through this gluing giving a tight contact structure in the complement of $\bdry R(\Lambda)$.  As $F$ is isotopic to $R(\Lambda)$, $L$ is isotopic to $\bdry R(\Lambda)$.
\end{proof}

There are potentially many choices in the above sketch, leading to the link $L$ being isotopic to a non-loose link $\bdry R(\Lambda)$  for some Legendrian graph $\Lambda$ in many different contact structures $\xi$.   
However, not every Legendrian ribbon with tight complement arises from this construction. Hedden and Tovstopyat-Nelip further show that the transverse link $\bdry R(\Lambda)$ constructed in their proof always has non-zero $\hat{t}$ invariant \cite{HTN}, extending a previous result for the transverse bindings of open books \cite{TN2020Transverse}. 
See \cite{BVVV} for the definition of the transverse invariant $\hat{t}$. 

\begin{remark}\label{rem:trivialthat}
Non-loose transverse links $\bdry R(\Lambda)$ with trivial $\hat{t}$ invariant do exist. Examples may be constructed by taking connected sums with tight contact manifolds that have trivial contact invariant.

More specifically,
Let $\Lambda_1$ be a Legendrian graph in $(M_1,\xi_1)$ with $\bdry R(\Lambda_1)$ non-loose.  Let $(M_0,\xi_0)$ be a tight contact manifold with trivial contact invariant.  Then, letting $\Lambda$ be the image of $\Lambda_1$ in the connected sum $(M_1\#M_0,\xi_1\#\xi_0)$, 
the transverse link $\bdry R(\Lambda_1)$ is non-loose since its exterior is the connected sum of tight manifolds (eg.\ Lemma~\ref{lem:connectedsum}).  However $\hat{t}(\bdry R(\Lambda_1))=0$ due to the behavior of these contact invariants under connected sum.
\end{remark}

\medskip

\subsection{The Tight Reattachment Property}
Our main theorem gives an affirmative answer to Question~\ref{ques:nonlooseboundaryofribbon}
in a different setting, when the Legendrian graph satisfies a condition that we call ``the Tight Reattachment Property''.  To state it, we need a few definitions.

In a contact $3$-manifold $(M,\xi)$, let $\Lambda$ be a Legendrian graph with regular closed neighborhood $N(\Lambda)$. That is, $N(\Lambda)$ is a thickening of a ribbon $R(\Lambda)$ to a handlebody with convex boundary whose dividing curves are $\bdry R(\Lambda)$.  Then, letting $\xi_\Lambda$ be the restriction of $\xi$ to $M_\Lambda = M\cut int N(\Lambda)$, the contact manifold $(M_\Lambda,\xi_\Lambda)$ is the {\em exterior} of $\Lambda$ and has convex boundary $\bdry N(\Lambda)$.

We may reglue $\bdry_- N(\Lambda)$ to $\bdry_+ N(\Lambda)$ by any orientation preserving diffeomorphism $\psi$ that is the identity in a collar of the dividing curves $\bdry R(\Lambda)$ to make a new contact $3$-manifold $(M_{\Lambda,\psi},\xi_{\Lambda,\psi})$.   Say $(M_{\Lambda,\psi},\xi_{\Lambda,\psi})$ is obtained from $(M,\xi)$ by a {\em reattachment along $\Lambda$}. If some reattachment along $\Lambda$ is tight, then we say $\Lambda$ has the {\em Tight Reattachment Property}, TRP for short.  Observe that a Legendrian graph with the TRP necessarily has tight complement as its exterior embeds in a tight manifold.    

As these reattachments are really about the ribbon, we may also speak of a ribbon having the TRP.  Indeed, the ribbon $R(\Lambda)$ has the same exterior as the Legendrian graph $\Lambda$.   To that point, Lemma~\ref{lem:legspinesofribbons} shows that any spine of a ribbon can be Legendrian realized to have an isotopic ribbon.
Furthermore, one may view the exterior $(M_\Lambda,\xi_\Lambda)$ of $\Lambda$ as being supported by a partial open book \cite{HKM-ctctinvtSFH}. In the language of \cite{licatamathews}, the TRP asks that such a partial open book extends to an open book supporting a tight contact structure.



With the definition of TRP, we may now state our main theorem.
\begin{maintheorem}
    Let $\Lambda$ be a connected Legendrian graph in a closed contact $3$-manifold.  If $\Lambda$ has the Tight Reattachment Property,
    then the transverse link $\bdry R(\Lambda)$ is non-loose.
\end{maintheorem}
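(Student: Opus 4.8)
The plan is to argue by contradiction. Suppose the transverse link $\bdry R(\Lambda)$ is loose, so that $M_T := M\setminus\nu(\bdry R(\Lambda))$ is overtwisted; the goal is to manufacture an overtwisted disk inside a tight reattachment $(M_{\Lambda,\psi},\xi_{\Lambda,\psi})$, contradicting the TRP. The first step is to describe $M_T$ concretely. Since $\bdry R(\Lambda)$ sits on the convex boundary $\bdry N(\Lambda)$ as a transverse push-off of its dividing set, $M_T$ is the union $M_\Lambda\cup_S P$, where $P := N(\Lambda)\setminus\nu(\bdry R(\Lambda))$ is a thickened ribbon, homeomorphic to $R(\Lambda)\times[0,1]$, glued to $M_\Lambda$ along $S = R_+'\sqcup R_-'$, two parallel copies of the ribbon sitting in $\bdry M_\Lambda$ (the positive and negative regions of $\bdry N(\Lambda)$ with a collar of the dividing set removed); the slab $P$ realizes the ``product'' gluing of $R_+'$ to $R_-'$, so $M_T$ is nothing but the complement of the transverse link in the trivial reattachment $M = M_{\Lambda,\mathrm{id}}$. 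Both pieces of this decomposition are tight: $N(\Lambda)$ is a standard neighborhood of a Legendrian graph, hence tight, so $P$ is too, while $M_\Lambda$ embeds in the tight manifold furnished by the TRP. Thus the assumed overtwistedness of $M_T$ is forced entirely by the way these two tight pieces are glued along $S$.

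Next I would fix an overtwisted disk $D\subset M_T$ and, using general position and convex surface theory, ambiently isotope $D$ into efficient position with respect to the surface $S$, so that $D\cap S$ is a disjoint union of circles. A circle of $D\cap S$ bounding a disk in $S$ can be removed: the innermost such one cobounds, together with a subdisk of $D$ lying in one of the tight pieces $M_\Lambda$ or $P$, a sphere bounding a ball there, and the subdisk is pushed across $S$ through that ball. Since each level $R(\Lambda)\times\{t\}$ is incompressible in $P$, any surviving circle of $D\cap S$ is essential in the ambient copy of $R(\Lambda)$, and the innermost such circle bounds a subdisk of $D$ inside $M_\Lambda$ — so these essential curves compress $S$ in $M_\Lambda$. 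If $D\cap S$ can be emptied, then $D$ lies entirely in $M_\Lambda$ or in $P$, contradicting tightness of each; so it remains to handle the case in which $D$ genuinely threads through $S$, with $D\cap P$ (after isotopy) a union of essential vertical annuli joining $R_+'$ to $R_-'$.

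This last case is the crux, and the step where the TRP must genuinely enter. A reattachment map $\psi$ differs from the product gluing by a surface automorphism of $R(\Lambda)$ that is the identity near $\bdry R(\Lambda)$ but need not be isotopic to the identity elsewhere, so an overtwisted disk for the product gluing does not automatically re-close for $\psi$; one cannot simply conclude that $M_\Lambda$ itself is overtwisted — that would answer Question~\ref{ques:nonlooseboundaryofribbon} in complete generality. I anticipate two possible routes. Route~(i): leverage the compressions of the essential curves of $D\cap S$ through disks of $D\cap M_\Lambda$ to simplify and slide $D\cap P$ until the configuration is localized in a collar of $\bdry R(\Lambda)$, the locus where every reattachment map agrees with the identity — a delicate point, since overtwistedness must be preserved under these modifications; the overtwisted disk would then persist into $M_{\Lambda,\psi}$ for the tight $\psi$ given by the TRP. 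Route~(ii): recast the decomposition $M_T = M_\Lambda\cup_S P$ in the language of sutured contact manifolds and partial open books — the exterior $(M_\Lambda,\xi_\Lambda)$ is supported by a partial open book with pages copies of $R(\Lambda)$ — and invoke a gluing/tightness-transfer theorem of Colin, Honda--Kazez--Matić type, where the obstruction to propagating tightness across $S$ (no overtwisted bypass straddling $S$, equivalently a tautness condition on the dividing data) depends only on $(M_\Lambda,\xi_\Lambda)$ and the dividing configuration of $S$, not on the gluing map, so that the existence of \emph{one} tight reattachment certifies that this obstruction vanishes and hence that $M_T$ is tight. Either way, Lemma~\ref{lem:legspinesofribbons} and the freedom in the choice of $\Lambda$ and $N(\Lambda)$ serve to arrange $S$, or the partial open book, conveniently; extracting from ``some reattachment is tight'' precisely the local control needed along $S$ is the heart of the proof, with the remainder being convex-surface bookkeeping.
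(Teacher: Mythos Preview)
Your setup is sound: the decomposition $M_T = M_\Lambda \cup_S P$ with both pieces tight is correct, and you are right that the whole difficulty is concentrated in the case where the overtwisted disk threads essentially through $S$. But neither of your proposed routes closes that gap, and in fact the gap cannot be closed along these lines.

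Route~(ii) is the more serious problem. You are hoping for a gluing theorem in which the obstruction to tightness of $M_\Lambda \cup_\psi P$ depends only on $(M_\Lambda,\xi_\Lambda)$ and the dividing data on $S$, not on the gluing map $\psi$. No such theorem exists, and it cannot: the tightness of a reattachment genuinely depends on $\psi$. The paper itself points this out (Remark~\ref{rem:massotex}): there are strongly non-loose Legendrian knots for which \emph{no} reattachment is tight, while other non-loose knots with contactomorphic exteriors do admit tight reattachments. So the map $\psi$ carries essential information, and ``one tight reattachment'' cannot certify anything about all the others via a $\psi$-independent obstruction. Route~(i) is not really a strategy: there is no mechanism for isotoping the essential annuli $D\cap P$ into a collar of $\bdry R(\Lambda)$ while preserving overtwistedness, and you yourself note that succeeding here would answer Question~\ref{ques:nonlooseboundaryofribbon} in full, which the TRP hypothesis is specifically meant to avoid.

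The paper's proof takes a completely different route and uses substantially heavier machinery. It places $\Lambda$ in a page of a supporting open book $(S,\phi)$, so that the TRP becomes the statement that some factorization $\phi = hg$ has $(S,h)$ tight and $g$ supported near $\Lambda$. In $(M_{S,g},\xi_{S,g})$ the link $\bdry R(\Lambda)$ is (up to connected sums with $\#(S^1\times S^2,\xi_{std})$) the binding of $(F,g\vert_F)$, whose complement is always tight. One then forms the connected sum $(M_{S,h},\xi_{S,h})\#(M_{S,g},\xi_{S,g})$, in which $\bdry R(\Lambda)$ still has tight complement, and invokes Baldwin's cobordism: $(M_{S,hg},\xi_{S,hg})$ is obtained from this connected sum by $(-1)$ Legendrian surgery on a link disjoint from $\bdry R(\Lambda)$. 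Wand's theorem that $(-1)$ contact surgery preserves tightness (after an auxiliary embedding into a closed tight manifold via Colin's universally-tight gluing) then finishes the argument. The TRP enters not as a local gluing obstruction but as the hypothesis $c$-style input ``$(S,h)$ is tight'' needed to run the comultiplication/surgery argument.
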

This theorem developed from and generalizes an initial approach for determining non-looseness of boundaries of Legendrian ribbons using Heegaard Floer contact invariants which we present is Section~\ref{sec:LOSS}.

To show this property applies somewhat broadly, give two constructions of Legendrian graphs with the TRP.   
Observe that one may express Murasugi sums of ribbons as ribbons of `Legendrian fusions' of Legendrian graphs, see section~\ref{sec:fusion}.  
\begin{ribbonfusion}
    Suppose $\Lambda$ is the Legendrian fusion $\Lambda_+ \ast \Lambda_-$ of two Legendrian graphs.
    If each $\Lambda_+$ and $\Lambda_-$ have the Tight Reattachment Property, then $\Lambda$ has the Tight Reattachment Property.
\end{ribbonfusion}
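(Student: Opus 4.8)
The plan is to produce, from tight reattachments of $\Lambda_+$ and of $\Lambda_-$, an explicit tight reattachment of $\Lambda=\Lambda_+\ast\Lambda_-$, exploiting that a Murasugi sum of open books supports the connected sum of the contact manifolds they support. Throughout I will use the reformulation noted in the introduction: by \cite{licatamathews}, a Legendrian graph has the TRP if and only if the partial open book supporting the exterior of its regular neighborhood (as in \cite{HKM-ctctinvtSFH}) extends to an honest open book supporting a tight closed contact manifold, each such extension being realized by a reattachment $(M_{\Lambda,\psi},\xi_{\Lambda,\psi})$.

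The first step is to see how the fusion organizes these data. Since $R(\Lambda)=R(\Lambda_+)\ast_P R(\Lambda_-)$ is a Murasugi sum of ribbons along a $2n$-gon $P$, the plumbing is supported in a ball $B$ meeting $R(\Lambda)$ exactly in $P$ and separating $R(\Lambda_+)$ from $R(\Lambda_-)$; correspondingly, the partial open book supporting the exterior $(M_\Lambda,\xi_\Lambda)$ should be the Murasugi sum of those supporting $(M_{\Lambda_+},\xi_{\Lambda_+})$ and $(M_{\Lambda_-},\xi_{\Lambda_-})$ --- the pages plumb along $P$ and the partial monodromies compose with support disjoint from $P$. This is the partial-open-book analogue of Gabai's naturality of Murasugi sums \cite{gabai}.

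Now invoke the hypotheses. As $\Lambda_+$ and $\Lambda_-$ have the TRP, their partial open books extend to open books $\mathcal{O}_+$ and $\mathcal{O}_-$ supporting tight closed contact manifolds $(Y_+,\zeta_+)$ and $(Y_-,\zeta_-)$. Form the Murasugi sum $\mathcal{O}:=\mathcal{O}_+\ast_P\mathcal{O}_-$ along the same polygon. Because Murasugi summing commutes with extending a partial open book, $\mathcal{O}$ extends the partial open book of $M_\Lambda$; and because a Murasugi sum of open books supports the connected sum of the supported manifolds, $\mathcal{O}$ supports $(Y_+,\zeta_+)\#(Y_-,\zeta_-)$, which is tight by Lemma~\ref{lem:connectedsum}. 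Hence the partial open book of $M_\Lambda$ admits a tight extension, so $\Lambda$ has the TRP.

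The main obstacle I anticipate is the first step: verifying that the partial open book of the fusion's exterior is genuinely the Murasugi sum of the two pieces' partial open books --- that the partial monodromies and the ``partial'' subsurfaces assemble correctly with no interference from the plumbing polygon, and that the Murasugi sum of the full extensions restricts to the Murasugi sum on the partial sub-open-books. If one prefers to avoid the partial-open-book formalism, the argument can be run directly on reattachments: choose the reattaching diffeomorphism $\psi$ for $\Lambda$ to agree with $\psi_+$ and $\psi_-$ off a collar of $P$ and to be standard near $P$, and then use a separating sphere in $B$ to identify $(M_{\Lambda,\psi},\xi_{\Lambda,\psi})$ with the contact connected sum $(M_{\Lambda_+,\psi_+},\xi_{\Lambda_+,\psi_+})\#(M_{\Lambda_-,\psi_-},\xi_{\Lambda_-,\psi_-})$, concluding again by Lemma~\ref{lem:connectedsum}. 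The delicate point there is the same one: normalizing $\psi_+$ and $\psi_-$ near $P$ without affecting tightness, and confirming that the result is honestly a connected sum rather than a twisted variant that could introduce overtwisting near the plumbing sphere.
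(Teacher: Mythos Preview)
Your proposal is correct, and the ``alternative'' you sketch at the end is exactly the route the paper takes: it works directly with reattaching maps, identifies $\bdry_\pm N(\Lambda_+)$ and $\bdry_\pm N(\Lambda_-)$ with subsurfaces of $\bdry_\pm N(\Lambda)$, extends each $\psi_i$ by the identity to $\hat\psi_i$ on all of $\bdry_- N(\Lambda)$, sets $\psi=\hat\psi_+\circ\hat\psi_-$, and observes that the resulting reattachment is the contact connected sum of the two tight reattachments. Your partial-open-book formulation is a valid repackaging of the same content (the paper itself notes this equivalence in the introduction), but it is a detour, not a shortcut.

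Regarding the obstacle you flag---normalizing $\psi_+$ and $\psi_-$ near the plumbing polygon $P$---the paper's definition of a reattachment already requires $\psi_i$ to be the identity in a collar of the dividing curves $\bdry R(\Lambda_i)$. The arcs of $\bdry P$ lie alternately in $\bdry R(\Lambda_+)$ and $\bdry R(\Lambda_-)$, so each $\psi_i$ is automatically the identity near $P$; extension by the identity is therefore immediate and no tightness-preserving normalization is needed. With that in hand, the separating-sphere/connected-sum identification is straightforward, and Lemma~\ref{lem:connectedsum} (or rather Colin's theorem that connected sums of tight manifolds are tight, which is the converse direction) finishes the argument.
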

Restated for ribbons, together these give the following:
\begin{cor}
    Suppose $R_i$ is a connected Legendrian ribbon  with the Tight Reattachment Property in $(M_i,\xi_i)$ for each $i=1,2$.  Then any Murasugi sum $R_1 \ast R_2$ may be regarded as a connected Legendrian ribbon $R$ with the Tight Reattachment Property so that 
    the transverse link $\bdry R$ is non-loose. \qed
\end{cor}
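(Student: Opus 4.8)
The plan is to deduce this corollary by combining Lemma~\ref{lem:legspinesofribbons}, Theorem~\ref{thm:murasugisums}, and Theorem~\ref{thm:main}, once the Murasugi sum is recognized as a Legendrian fusion of ribbons. First, for each $i=1,2$ I would choose a spine of the ribbon $R_i$ and invoke Lemma~\ref{lem:legspinesofribbons} to Legendrian realize it as a connected Legendrian graph $\Lambda_i$ in $(M_i,\xi_i)$ with ribbon isotopic to $R_i$. Since a ribbon and such a Legendrian realization of one of its spines have the same exterior, and the Tight Reattachment Property of a ribbon is precisely a property of that exterior together with the convex boundary structure having dividing set $\bdry R_i$, each $\Lambda_i$ inherits the TRP from $R_i$.

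Next I would appeal to the construction of Section~\ref{sec:fusion}: given the plumbing data defining the Murasugi sum $R_1 \ast R_2$ --- the distinguished $2n$-gon patch in each ribbon and the identification of their pages --- one arranges the Legendrian fusion $\Lambda = \Lambda_1 \ast \Lambda_2$ so that its ribbon $R(\Lambda)$ is isotopic, as a convex surface with dividing set its boundary, to $R_1 \ast R_2$. The graph $\Lambda$ is connected because the fusion joins the two spines along the plumbing polygon, hence $R = R(\Lambda)$ is a connected Legendrian ribbon isotopic to the given Murasugi sum. With each $\Lambda_i$ having the TRP, Theorem~\ref{thm:murasugisums} gives that $\Lambda$ has the TRP as well, so $R$ has the TRP.

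Finally, $\Lambda$ is a connected Legendrian graph in a closed contact $3$-manifold with the Tight Reattachment Property, so Theorem~\ref{thm:main} applies and shows that $\bdry R(\Lambda) = \bdry R$ is non-loose, which is the claim. The only real work is the middle step, and it is organizational rather than contact-geometric: matching the combinatorial recipe for a Murasugi sum with that for a Legendrian fusion so that the identification $R_1 \ast R_2 \cong R(\Lambda_1 \ast \Lambda_2)$ is literal, including compatibility of the reattachment regions $\bdry_\pm N$ used to define the TRP. Once Section~\ref{sec:fusion} establishes this dictionary, the corollary is immediate from the two theorems.
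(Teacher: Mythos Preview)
Your proposal is correct and matches the paper's intended argument: the corollary is stated with a \qed\ immediately after Theorem~\ref{thm:murasugisums} as a direct restatement for ribbons of that theorem together with Theorem~\ref{thm:main}, using Lemma~\ref{lem:legspinesofribbons} and Lemma~\ref{lem:ribbonmurasugi} to pass between ribbons and Legendrian spines and to identify the Murasugi sum with the ribbon of a Legendrian fusion. The paper later spells out exactly this reasoning in the proof of the theorem on $\mathcal{H}$ in the Examples section, so your unpacking is precisely what the authors have in mind.
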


We further observe that the TRP is inherited from subgraphs.
\begin{subgraphTRP}
    Let $\Lambda'$ be a connected subgraph of a connected Legendrian graph $\Lambda$ in a closed contact $3$-manifold.  If $\Lambda'$ has the TRP, then $\Lambda$ does too.
\end{subgraphTRP}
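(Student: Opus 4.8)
The plan is to realize the exterior of $\Lambda$ as a sub-contact-manifold of the exterior of $\Lambda'$, so that a tight reattachment along $\Lambda'$ restricts to a tight reattachment along $\Lambda$. First I would arrange the neighborhoods compatibly: since $\Lambda'$ is a connected subgraph of $\Lambda$, after an isotopy we may take the regular neighborhood $N(\Lambda')$ to sit inside $N(\Lambda)$, with $R(\Lambda')$ a subsurface of $R(\Lambda)$. The key observation is that passing from $\Lambda'$ to $\Lambda$ amounts to attaching the extra edges (and vertices) of $\Lambda$ as Legendrian $1$-handles to $N(\Lambda')$; dually, $N(\Lambda)$ is obtained from $N(\Lambda')$ by attaching contact $1$-handles, and therefore the exterior $M_\Lambda$ is obtained from the exterior $M_{\Lambda'}$ by removing a collection of disjoint convex contact $1$-handles $H_1,\dots,H_k$ glued along disk pairs in $\bdry_\pm N(\Lambda')$. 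In particular there is a contact embedding $(M_\Lambda,\xi_\Lambda) \hookrightarrow (M_{\Lambda'},\xi_{\Lambda'})$ whose complement is $\bigcup_j H_j$, a standard (tight) piece.

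Next I would use the hypothesis: choose a reattaching diffeomorphism $\psi'$ along $\Lambda'$ so that $(M_{\Lambda',\psi'},\xi_{\Lambda',\psi'})$ is tight. The goal is to produce a reattaching diffeomorphism $\psi$ along $\Lambda$ with $(M_{\Lambda,\psi},\xi_{\Lambda,\psi})$ tight, and the natural candidate is to build $(M_{\Lambda,\psi},\xi_{\Lambda,\psi})$ as a codimension-$0$ subset of $(M_{\Lambda',\psi'},\xi_{\Lambda',\psi'})$ — namely remove from $M_{\Lambda',\psi'}$ the same handles $H_1,\dots,H_k$. Concretely, $\psi'$ restricted to the part of $\bdry_- N(\Lambda')$ that survives in $\bdry_- N(\Lambda)$ should be (a modification of) $\psi$, and the handles $H_j$ simply are not reglued. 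The point to verify is that what one gets this way genuinely is a reattachment along $\Lambda$ in the sense of the paper: that $\psi$ can be taken orientation-preserving, the identity near the dividing set $\bdry R(\Lambda)$, and matching the convex structure on $\bdry N(\Lambda)$ that comes from thickening $R(\Lambda)$. This should follow because the handle-attaching regions can be chosen disjoint from a collar of $\bdry R(\Lambda')$ and because the dividing curves behave correctly under contact $1$-handle attachment.

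Once $(M_{\Lambda,\psi},\xi_{\Lambda,\psi})$ embeds into the tight manifold $(M_{\Lambda',\psi'},\xi_{\Lambda',\psi'})$ as a codimension-$0$ submanifold, tightness is inherited: an overtwisted disk in $(M_{\Lambda,\psi},\xi_{\Lambda,\psi})$ would be an overtwisted disk in the ambient tight manifold. Hence $\Lambda$ has the TRP. I expect the main obstacle to be the bookkeeping in the previous paragraph — carefully checking that attaching the edges of $\Lambda\setminus\Lambda'$ really corresponds to contact $1$-handle attachment on $N(\Lambda')$ (including the case of extra vertices, which one first absorbs by subdividing or by attaching $0$-handles that are then cancelled), and that the restriction of $\psi'$ together with "don't reglue the handles" assembles into a legitimate reattaching map $\psi$ that is the identity near $\bdry R(\Lambda)$. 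The connectedness hypotheses on both $\Lambda'$ and $\Lambda$ are used to ensure the relevant neighborhoods are handlebodies and that the Legendrian realization/thickening in Lemma~\ref{lem:legspinesofribbons} applies; I would invoke that lemma to normalize the ribbons before starting the handle analysis.
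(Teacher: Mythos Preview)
Your setup is right --- nesting $N(\Lambda') \subset N(\Lambda)$ and hence $M_\Lambda \subset M_{\Lambda'}$ is exactly how to start --- but the construction of $\psi$ is where the argument goes off. A reattachment along $\Lambda$ glues $\bdry_- N(\Lambda)$ to $\bdry_+ N(\Lambda)$ and produces a \emph{closed} contact $3$-manifold. So $(M_{\Lambda,\psi},\xi_{\Lambda,\psi})$ cannot be a proper codimension-$0$ submanifold of the closed connected manifold $(M_{\Lambda',\psi'},\xi_{\Lambda',\psi'})$; the picture ``remove the handles $H_j$ from $M_{\Lambda',\psi'}$'' leaves boundary and is not a reattachment at all. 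Relatedly, you speak of \emph{restricting} $\psi'$ to the part of $\bdry_- N(\Lambda')$ that survives in $\bdry_- N(\Lambda)$, but the inclusion goes the other way: identifying $\bdry_\pm N(\Lambda') \cong R(\Lambda')$ and $\bdry_\pm N(\Lambda) \cong R(\Lambda)$, the surface $\bdry_\pm N(\Lambda')$ is a \emph{subsurface} of $\bdry_\pm N(\Lambda)$, so what you need is an \emph{extension} of $\psi'$, not a restriction.

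The paper's proof makes exactly this correction and is then essentially one line: extend $\psi'$ by the identity across $R(\Lambda)\setminus R(\Lambda')$ to obtain $\psi$. On the extra bands the identity gluing simply reassembles the handles $H_j$ as they were, so the reattachment along $\Lambda$ with this $\psi$ literally rebuilds $M_{\Lambda'}$ and then performs the $\psi'$-gluing. Hence $(M_{\Lambda,\psi},\xi_{\Lambda,\psi}) = (M_{\Lambda',\psi'},\xi_{\Lambda',\psi'})$, which is tight by hypothesis. No embedding-into-tight argument, no handle bookkeeping beyond recognizing that ``identity on the new bands'' undoes the handle removal; your anticipated obstacles (extra vertices, collar conditions, Lemma~\ref{lem:legspinesofribbons}) simply do not arise.
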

In general there is no reason that the non-looseness of $\bdry R(\Lambda')$ should confer the non-looseness of $\bdry R(\Lambda)$.
However, with Theorem~\ref{thm:main}, Theorem~\ref{thm:subgraphTRP} shows the non-looseness of $\bdry R(\Lambda)$ follows from this stronger sense of  non-looseness of $\bdry R(\Lambda')$.

\medskip

If a Legendrian graph $\Lambda$ is actually a Legendrian knot, then $\Lambda$ has the TRP if and only if some sequence of Legendrian surgeries (either positive or negative) on $\Lambda$ yields a tight manifold.  Thus we immediately have the following corollary.

\begin{cor}
    Let $L$ be a Legendrian knot in a closed contact $3$-manifold.  If some sequence of $(\pm 1)$ Legendrian surgeries on $L$ yields a tight contact $3$-manifold, then the transverse link $\bdry R(L)$ is non-loose. \qed
\end{cor}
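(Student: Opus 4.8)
The plan is to obtain this as a direct consequence of Theorem~\ref{thm:main}. A Legendrian knot $L$ is in particular a connected Legendrian graph, so it suffices to verify that the surgery hypothesis forces $L$ to have the Tight Reattachment Property; then Theorem~\ref{thm:main} applies verbatim and yields that $\bdry R(L)$ is non-loose. In fact I would establish the full equivalence stated above (that $L$ has the TRP if and only if some finite sequence of $(\pm 1)$-Legendrian surgeries on $L$ produces a tight closed contact manifold), although only the ``if'' direction is needed for the corollary.

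For that direction, recall the standard local model: a regular neighborhood $N(L)$ of a Legendrian knot is a convex solid torus whose dividing set $\bdry R(L)$ consists of two parallel longitudinal curves, so that $\bdry_+N(L)$ and $\bdry_-N(L)$ are annuli, each a copy of the ribbon annulus $R(L)$. A reattachment along $L$ reglues $\bdry_-N(L)$ to $\bdry_+N(L)$ by an orientation-preserving diffeomorphism $\psi$ that is the identity near $\bdry R(L)$; up to the isotopies of $\psi$ that do not alter the resulting contact manifold, such a regluing of the exterior $(M_L,\xi_L)$ is recorded by a single integer (a framing/twist parameter). The key step is to identify these regluings with contact surgeries: the ``identity near the dividing set'' normalization is precisely what forces the piece being glued back to carry the tight model contact structure compatible with a contact surgery on $L$, and the $\pm 1$ cases are exactly the positive and negative Legendrian surgeries. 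Iterating reattachments --- equivalently, reattaching along Legendrian push-offs of the surgery-dual knot inside the reglued neighborhood, or extending the partial open book supporting $(M_L,\xi_L)$ one handle at a time --- realizes arbitrary finite sequences of $(\pm 1)$-Legendrian surgeries on $L$. Hence, if some such sequence yields a tight closed contact manifold, that manifold is (contactomorphic to) $(M_{L,\psi},\xi_{L,\psi})$ for an appropriate reattachment $\psi$, so $L$ has the TRP.

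Applying Theorem~\ref{thm:main} then completes the proof. The main obstacle is the identification in the middle paragraph: one must carefully match the ``identity near the dividing set'' reattachments with contact $(\pm 1)$-surgeries and check that composing reattachments (on push-offs of the dual knot) reproduces exactly the finite sequences of Legendrian surgeries --- in particular that mixed-sign sequences are captured and that no spurious regluings are introduced. Everything outside that bookkeeping is formal, since connectivity of $L$ as a graph is automatic and Theorem~\ref{thm:main} does the rest.
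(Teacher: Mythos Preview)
Your proposal is correct and follows exactly the route the paper intends: the corollary is stated with a bare \qed, preceded by the sentence ``If a Legendrian graph $\Lambda$ is actually a Legendrian knot, then $\Lambda$ has the TRP if and only if some sequence of Legendrian surgeries (either positive or negative) on $\Lambda$ yields a tight manifold,'' and you are simply supplying the justification for that assertion and then invoking Theorem~\ref{thm:main}. Your identification of reattachments along $L$ with powers of a Dehn twist on the ribbon annulus (hence with iterated $(\pm1)$-surgeries on $L$ and its push-offs) is the right mechanism; the only cosmetic point is that a single reattachment by $k$ twists already \emph{is} the result of $|k|$ same-sign surgeries, so there is no need to frame this as ``iterating reattachments'' --- mixed-sign sequences collapse since twists along parallel copies of $L$ commute and cancel in pairs.
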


\begin{remark}\label{rem:massotex}
    Not every non-loose Legendrian knot has the TRP.  
    Since any contact surgery on a non-loose Legendrian knot with boundary parallel full Giroux torsion will be overtwisted, such knots cannot have the TRP. 
    Furthermore there are even examples of {\em strongly} non-loose Legendrian knots in closed contact manifolds for which any sequence of $(\pm1)$ Legendrian surgeries is overtwisted; therefore such knots do not have the TRP.  
    
    One may observe that the Legendrian knots of \cite[Theorem 1]{massot} actually are such examples of strongly non-loose Legendrian knots without the TRP.  Moreover, for one orientation, the transverse pushoff is non-loose  (but with boundary parallel full Giroux torsion).  Consequently, the boundary of a ribbon of such Legendrian knots is non-loose, but that fact doesn't follow from our Theorem~\ref{thm:main}.

    More generally, one should be able to create such examples by, say, starting with a transverse knot in a tight contact structure, inserting full Giroux torsion along it, and then taking a parallel Legendrian curve on the Giroux torsion torus out where the planes have rotated by $\pi/2$.  Such a Legendrian knot cannot have the TRP and we expect it to also be strongly non-loose.  
\end{remark}

\begin{question}
    Let $\Lambda$ be a non-loose connected Legendrian graph.  If $\bdry R(\Lambda)$ is non-loose but $\Lambda$ does not have the TRP, then is $\Lambda$ a knot?
\end{question}

\begin{question}
        A Legendrian graph with the TRP is necessarily non-loose.  Since some sequence of stabilizations will loosen a Legendrian graph in an overtwisted contact structure (cf.\ \cite{BO-nonlooseness}), the TRP cannot be preserved by stabilization.  
        However, is the TRP preserved by Legendrian destabilization?
\end{question}

\subsection{The Bennequin bound}
The Bennequin bound was originally stated by Bennequin for transverse knots in $(S^3, \xi_{std})$ \cite{bennequin} and extended by Eliashberg to null-homologous transverse links in any tight $3$-manifold \cite{eliashberg}.  Its extension to non-loose null-homologous transverse links in any contact $3$-manifold is attributed to \'Swiatowski; see \cite[Proposition 1.3]{Etnyre-knotsinOTctctstr} and \cite[Theorem 1.5]{ItoKawamuro}.  
\begin{prop}[The Bennequin bound]\label{prop:bennybound}
   Let $T$ be a non-loose, null-homologous transverse link.  Then for any Seifert surface $\Sigma$ we have  $sl(T, [\Sigma]) \leq -\chi(\Sigma)$. \qed
\end{prop}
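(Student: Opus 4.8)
The plan is to adapt the proof of the Bennequin inequality of Bennequin and Eliashberg, carried out inside the tight exterior $M\setminus T$ rather than in the ambient manifold $M$. First I would isotope the Seifert surface $\Sigma$, rel its transverse boundary $T$, so that the characteristic foliation $\Sigma_\xi$ is Morse--Smale: finitely many elliptic and hyperbolic singularities, all in the interior, no saddle connections, and $\Sigma_\xi$ transverse to $\partial\Sigma$. Writing $e_\pm$ and $h_\pm$ for the numbers of elliptic and hyperbolic singularities of each sign, Poincar\'e--Hopf (using $\chi(\partial\Sigma)=0$) gives $\chi(\Sigma)=(e_++e_-)-(h_++h_-)$, while the standard identification of $sl(T,[\Sigma])$ with a relative Euler number of $\xi|_\Sigma$ gives $sl(T,[\Sigma])=(e_-+h_+)-(e_++h_-)$. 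Combining these yields $sl(T,[\Sigma])=-\chi(\Sigma)+2(e_--h_-)$, so it suffices to prove the inequality $e_-\le h_-$.

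To get $e_-\le h_-$, I would repeatedly apply Giroux's elimination lemma to cancel a negative elliptic point against a negative hyperbolic point joined to it by a single leaf. Each such move is an isotopy of $\Sigma$ supported near the connecting arc in the interior, hence stays inside $M\setminus T$, and it lowers both $e_-$ and $h_-$ by one. If after all available cancellations a negative elliptic point still survived, then Eliashberg's analysis of the negative part of $\Sigma_\xi$ --- following the leaves emanating from the surviving singularity and exploiting the Morse--Smale structure --- would produce a limit cycle and, after a further $C^0$ modification of $\Sigma$, an overtwisted disk. As the interior of $\Sigma$, and any disk obtained from it by such a modification, lies in $M\setminus T$, this contradicts the tightness of $M\setminus T$. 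Hence all negative elliptic points can be removed, so $e_-\le h_-$ held originally, and $sl(T,[\Sigma])\le-\chi(\Sigma)$.

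The step I expect to be the main obstacle is precisely this last one: Eliashberg's argument is normally run for surfaces sitting in a tight manifold, whereas here only the \emph{interior} of $\Sigma$ lies in the tight region, so one must check carefully that every overtwisted disk produced stays within $M\setminus T$ and never crosses $T$. The cleanest way to make this airtight, I think, is to recast everything in terms of convex surfaces: drill out a standard convex solid-torus neighborhood $N(T)$, arranged so that each component of $\partial N(T)$ carries exactly two dividing curves of the slope recording $sl(T,[\Sigma])$, and work with $\Sigma_0=\Sigma\setminus N(T)$, which has $\chi(\Sigma_0)=\chi(\Sigma)$ and lies in the genuinely tight manifold $M\setminus N(T)$. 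After Legendrian-realizing $\partial\Sigma_0$ on the convex torus $\partial N(T)$ and making $\Sigma_0$ convex rel boundary, Giroux's tightness criterion forbids a homotopically trivial component of its dividing set, and the Euler-class count $\langle e(\xi),[\Sigma_0]\rangle=\chi(\Sigma_0^+)-\chi(\Sigma_0^-)$, combined with the boundary data on $\partial N(T)$, then delivers the inequality. In both routes the real content is the same bookkeeping: controlling how $\chi$ and $sl$ transform when one excises a collar of the transverse link and passes to the tight complement.
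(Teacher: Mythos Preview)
The paper does not actually give its own proof of this proposition: it is stated with a \qed\ as a known result, attributed to \'Swi\k{a}towski with pointers to \cite[Proposition 1.3]{Etnyre-knotsinOTctctstr} and \cite[Theorem 1.5]{ItoKawamuro}, and later the paper refers to ``the proof of Proposition~\ref{prop:bennybound} (see eg.\ proof of \cite[Theorem 4.15]{Etnyre-IntroContactLect})'' when invoking its method. Your proposal is exactly this standard argument --- the Eliashberg characteristic-foliation count and Giroux elimination, run in the tight complement $M\setminus T$ rather than in $M$ --- so it is correct and matches what the cited references do. Your identification of the only delicate point (ensuring the overtwisted disk produced lies in $M\setminus T$) and your suggested convex-surface workaround are both on target; indeed the observation that the interior of $\Sigma$ and all the local modifications stay away from the transverse boundary is precisely what makes \'Swi\k{a}towski's extension go through.
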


As observed in \cite[Lemma 2.2]{BCV-LegRibbonsinOTctctstr}, one readily sees that
for a Legendrian graph $\Lambda$ we have 
\[sl(\bdry R(\Lambda), [R(\Lambda)]) = -\chi(R(\Lambda)).\]
As such, for null-homologous transverse links in tight manifolds, it has been conjectured by many that the Bennequin bound is always realized by a such a ribbon, see for example {\cite[Conjecture 4.1]{Hayden-LegribbonsandSQP}} and the conjectures in \cite{ItoKawamuro}.

\begin{conj}
    Suppose a transverse null-homologous link $T$ in a tight $3$-manifold $(M,\xi)$ realizes the Bennequin bound.  
    Then there is a Legendrian graph $\Lambda$ so that $T = \bdry R(\Lambda)$.  
\end{conj}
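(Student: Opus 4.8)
One direction is already recorded: the identity $sl(\bdry R(\Lambda),[R(\Lambda)]) = -\chi(R(\Lambda))$ shows the ribbon of any Legendrian graph realizes the Bennequin bound, with $R(\Lambda)$ then automatically of minimal complexity in its homology class. So the content of the conjecture is the realization direction (a positive answer would characterize Legendrian-ribbon boundaries in tight manifolds as precisely the Bennequin-sharp transverse links), and the plan is to upgrade the characteristic-foliation analysis in Eliashberg's proof of Proposition~\ref{prop:bennybound} \cite{eliashberg} to a geometric statement when the bound is sharp. First I would fix a Seifert surface $\Sigma$ for $T$ with $sl(T,[\Sigma]) = -\chi(\Sigma)$ (necessarily of minimal complexity, hence incompressible, in its class) and isotope $\Sigma$ rel $T$ so that its characteristic foliation is Morse--Smale, with $e_\pm$ elliptic and $h_\pm$ hyperbolic points of each sign and no closed leaves (the latter possible since $\xi$ is tight). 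Poincar\'e--Hopf gives $\chi(\Sigma) = (e_+ + e_-) - (h_+ + h_-)$ and the standard index count gives $sl(T,[\Sigma]) = (h_+ - h_-) - (e_+ - e_-)$, so the defect in the Bennequin inequality equals $2(h_- - e_-)$ and realizing the bound is exactly the balance condition $e_- = h_-$.

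The goal is then to isotope $\Sigma$ further in $(M,\xi)$ so as to remove all negative singularities, keeping the foliation Morse--Smale with no closed leaves. Once the characteristic foliation has only positive elliptic (source) points, positive hyperbolic (saddle) points, and no closed leaves, the characteristic flow retracts $\Sigma$ onto the embedded $1$-complex $\Lambda$ consisting of the positive elliptic and hyperbolic points together with the separatrices joining them. Since the leaves of the characteristic foliation are Legendrian, $\Lambda$ is a Legendrian graph, $\Sigma$ is isotopic to its ribbon $R(\Lambda)$, and hence $T = \bdry R(\Lambda)$; moreover $\Lambda$ is connected whenever $\Sigma$ is. (This ``only positive singularities and no closed leaves $\Rightarrow$ ribbon'' characterization is the one underlying \cite{BaaderIshikawa}.)

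It is convenient to phrase the target state in convex-surface language. Perturb so that $T = T_+(L)$ for a Legendrian approximation $L$ of $T$ and $\Sigma$ is convex, with dividing set $\Gamma$ of which $a$ components are arcs and with positive and negative regions $R_\pm$. Then $tb(L,\Sigma) = -\tfrac{1}{2}\#(\Gamma \cap L) = -a$, $rot(L,\Sigma) = \chi(R_+) - \chi(R_-)$, $\chi(\Sigma) = \chi(R_+) + \chi(R_-) - a$, and $sl(T) = tb(L,\Sigma) - rot(L,\Sigma)$, so realizing the Bennequin bound becomes the equality $\chi(R_-) = a$. The favorable normal form is that $\Gamma$ consists of $a$ boundary-parallel arcs, each cutting off a half-disk component of $R_-$, with no closed dividing curves and with $R_+$ connected; this is precisely the dividing set carried by a Legendrian ribbon, and from it a Legendrian realization of a spine of $\Sigma$ furnishes the desired $\Lambda$. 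Thus the task reduces to reaching this normal form by bypass moves on the convex surface $\Sigma$.

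The main obstacle --- and the reason the statement remains conjectural --- is exactly the passage from the numerical equality ($e_- = h_-$, equivalently $\chi(R_-) = a$) to this geometric normal form. The equality does not by itself present the negative singularities in a configuration to which the Giroux elimination lemma applies: a negative elliptic point may be screened by positive singularities so that no negative hyperbolic separatrix reaches it, and $R_-$ may a priori have non-disk components or be accompanied by closed dividing curves essential in $\Sigma$ that merely cancel numerically against hidden negative singularities. Clearing these requires genuine isotopies of $\Sigma$ in $M$ (sequences of bypass attachments), and although tightness of $\xi$ rules out the obvious local obstructions --- overtwisted disks, and closed dividing curves bounding disks on $\Sigma$ --- it is not known to suffice for this global simplification. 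Indeed the statement is open even in $(S^3,\xi_{std})$, where it asserts that every transverse link realizing the Bennequin bound on a minimal-genus Seifert surface bounds a quasipositive surface (cf.\ \cite[Conjecture 4.1]{Hayden-LegribbonsandSQP} and the conjectures of \cite{ItoKawamuro}). By Hayden's theorem the conclusion is equivalent to $T$ being strongly quasipositive with respect to some open book supporting $\xi$, so one could instead try to construct such an open book directly; this appears no easier, the crux being the same --- producing a positive-band (ribbon) presentation from the sole hypothesis that the Bennequin bound is attained.
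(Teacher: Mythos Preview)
The statement is a \emph{conjecture} in the paper, not a theorem; the paper offers no proof and explicitly presents it as open (attributing it to \cite[Conjecture 4.1]{Hayden-LegribbonsandSQP} and \cite{ItoKawamuro}). There is therefore nothing in the paper to compare your argument against.

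Your write-up is appropriately framed: you label it a ``sketch of a possible approach,'' you correctly isolate the content as the realization direction, and you honestly identify the gap --- passing from the numerical equality $e_-=h_-$ (equivalently $\chi(R_-)=a$) to the geometric normal form in which $\Gamma$ consists only of boundary-parallel arcs cutting off half-disks of $R_-$. You are right that tightness alone is not known to furnish the bypasses needed for this simplification, and that the problem is open even in $(S^3,\xi_{std})$. So your proposal is not a proof and does not purport to be one; as a discussion of where the difficulty lies, it is accurate and consonant with the paper's own framing of the conjecture.
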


In general, the presence of Giroux torsion in a link exterior can prevent this from holding true for non-loose links.  However, perhaps the conjecture continues to hold for strongly non-loose links.

\begin{conj}
    Suppose a transverse null-homologous link $T$ in a contact $3$-manifold $(M,\xi)$ is strongly non-loose and realizes the Bennequin bound.  Then there is a Legendrian graph $\Lambda$ so that $T = \bdry R(\Lambda)$.
\end{conj}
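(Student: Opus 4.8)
The plan is to run convex surface theory on a Bennequin-realizing Seifert surface inside the exterior of $T$ and to show that the equality $sl(T,[\Sigma])=-\chi(\Sigma)$, together with tightness and the absence of Giroux torsion in that exterior, forces the surface into the normal form of a Legendrian ribbon. First I would pass to the exterior $E=M\cut\Int N(T)$ with $\xi_E=\xi\vert_E$; by hypothesis $(E,\xi_E)$ is tight and has no Giroux torsion. Choose a connected Seifert surface $\Sigma$ with $sl(T,[\Sigma])=-\chi(\Sigma)$; because the Bennequin bound (Proposition~\ref{prop:bennybound}) holds for every Seifert surface of $T$, such a $\Sigma$ is automatically of minimal genus, so the complementary sutured manifold $(E,\gamma_\Sigma)$ is taut and $\Sigma_E:=\Sigma\cap E$ is incompressible and $\bdry$-incompressible in $E$. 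Take $\bdry N(T)$ to be a standard convex torus, isotope $\Sigma$ so that $\bdry\Sigma_E$ is a union of Legendrian ruling curves meeting the dividing set of $\bdry N(T)$ minimally, and make $\Sigma_E$ convex relative to its boundary. Writing $\Sigma_E=R_+\cup_{\Gamma_{\Sigma_E}}R_-$, the Euler-class formula $\langle e(\xi),[\Sigma_E,\bdry\Sigma_E]\rangle=\chi(R_+)-\chi(R_-)$ together with $\tb(\bdry\Sigma_E)=-\tfrac12\lvert\bdry\Sigma_E\cap\Gamma_{\Sigma_E}\rvert$ translate the hypothesis into $\chi(R_-)$ being equal to the number of dividing arcs and $\chi(R_+)=\chi(\Sigma)$.

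Next I would normalize the dividing set. Since $E$ is tight, Giroux's criterion applied to a neighborhood of $\Sigma_E$ shows that $\Gamma_{\Sigma_E}$ has no component bounding a disk in $\Sigma_E$; combined with $\chi(R_-)=\#\{\text{dividing arcs}\}$ and a short Euler-characteristic count, this forces $R_-$ to be a disjoint union of ``bite'' half-disks, one per dividing arc, together with a (possibly empty) collection of annuli cobounded by parallel essential dividing circles. Now the absence of Giroux torsion enters: using the imbalance principle and the $\bdry$-incompressibility of $\Sigma_E$, each such family of parallel essential circles can be peeled off $\bdry\Sigma_E$ by a bypass cancelling a pair of them, the bypass existing because $E$ is tight and torsion-free; iterating, $\Gamma_{\Sigma_E}$ reduces to exactly the $\bdry$-parallel arcs, $R_-$ becomes a union of bite half-disks, and $R_+$ deformation retracts onto a connected spine graph $\Lambda$ with $\chi(\Lambda)=\chi(\Sigma)$.

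It then remains to reconstruct the ribbon. Every component of $\bdry R_+$ meets $\Gamma_{\Sigma_E}$, so $\Lambda$ is non-isolating on $\Sigma_E$, and by the Legendrian realization principle $\Sigma_E$ may be isotoped through convex surfaces so that $\Lambda$ is Legendrian and $\Sigma_E\cut\Lambda$ is a collar of $\bdry\Sigma_E$. Pushing $\bdry\Sigma_E$ back out to $T$, the surface $\Sigma$ is then tangent to $\xi$ exactly along $\Lambda$, transverse to $\xi$ elsewhere, and retracts to $\Lambda$ along $\xi\vert_\Sigma$; by the convex characterization of Legendrian ribbons (cf.\ Lemma~\ref{lem:legspinesofribbons}) this identifies $\Sigma$, up to isotopy rel $T$, with $R(\Lambda)$, so $T=\bdry R(\Lambda)$.

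The main obstacle is the step invoking torsion-freeness --- that the parallel essential dividing circles on $\Sigma_E$ can always be bypass-reduced. Tightness of $E$ alone does not suffice: this is precisely the boundary-parallel Giroux torsion of Remark~\ref{rem:massotex}, where $\Sigma_E$ carries a rigid family of essential dividing circles, $\Sigma$ fails to be a ribbon, and the merely non-loose version of the statement is false. I would attempt the reduction by building, as in the Hedden--Tovstopyat-Nelip construction \cite{HTN}, a convex decomposition hierarchy \cite{HKM-convexdecomptheory} of the taut sutured manifold $(E,\gamma_\Sigma)$ all of whose decomposing surfaces have non-nested $\bdry$-parallel dividing curves, and then showing that every tight, torsion-free contact structure on $E$ with the prescribed convex boundary is isotopic to one compatible with such a hierarchy --- a uniqueness-up-to-torsion statement for tight fillings of the partial open book carried by $(E,\xi_E)$. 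The product regions of that hierarchy would display $\Sigma_E$ in the required normal form, and establishing this compatibility is where I expect essentially all the difficulty to lie.
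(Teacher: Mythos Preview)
The statement you are attempting to prove is a \emph{conjecture} in the paper, not a theorem: the authors state it as an open problem and offer no proof. So there is nothing to compare your argument against; any complete proof would be a new result.

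Your outline is a natural strategy, and you are honest about where it breaks down. The step you flag --- reducing essential closed dividing circles on $\Sigma_E$ using torsion-freeness of $E$ --- is genuinely the whole problem, not just the hard part. The difficulty is that bypasses along $\Sigma_E$ need to exist in $(E,\xi_E)$, and neither tightness nor absence of Giroux torsion produces them for free: the imbalance principle gives bypasses along annuli with boundary on convex tori, not along an arbitrary incompressible Seifert surface, and there is no general mechanism that converts ``no Giroux torsion in $E$'' into ``every convex minimal-genus surface admits enough bypasses to strip its essential dividing circles.'' Your proposed workaround --- showing that every tight, torsion-free contact structure on $E$ with the given convex boundary is isotopic to one compatible with an HKM hierarchy with $\bdry$-parallel dividing sets --- is itself a classification statement of the sort that is only known in very restricted topological settings. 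Absent that, the argument is a reformulation of the conjecture rather than a proof.

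There is also a smaller wrinkle earlier: the Euler-class bookkeeping that you summarize as ``$\chi(R_-)$ equals the number of dividing arcs'' needs the boundary contribution handled carefully (one is comparing $sl$ of the transverse link $T$ with $tb$ and $rot$ of a Legendrian ruling $\bdry\Sigma_E$ on a convex torus), and the conclusion that $R_-$ is a union of bigons plus annuli does not follow from $\chi(R_-)$ alone without first excluding higher-genus pieces of $R_-$. These are fixable, but they should be written out; the torsion step is the actual gap.
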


\begin{lemma}
    If the transverse link $\bdry R(\Lambda)$ is non-loose then $R(\Lambda)$ minimizes genus among Seifert surfaces for $\bdry R(\Lambda)$ in the same relative homology class.  
\end{lemma}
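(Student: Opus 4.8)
The plan is to derive the statement directly from the Bennequin bound of Proposition~\ref{prop:bennybound} together with the identity $sl(\bdry R(\Lambda), [R(\Lambda)]) = -\chi(R(\Lambda))$ recorded just above, the key point being that the self-linking number of a transverse link depends on the chosen Seifert surface only through its class in $H_2(M, \bdry R(\Lambda))$.

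First I would note that $\bdry R(\Lambda)$ is null-homologous, since it bounds the compact surface $R(\Lambda)$ (which is connected when $\Lambda$ is). Invoking the hypothesis that $\bdry R(\Lambda)$ is non-loose, Proposition~\ref{prop:bennybound} then gives $sl(\bdry R(\Lambda), [\Sigma]) \le -\chi(\Sigma)$ for every Seifert surface $\Sigma$ of $\bdry R(\Lambda)$. Next I would restrict to those $\Sigma$ lying in the same relative homology class as $R(\Lambda)$. Because $sl(T, [\Sigma])$ is computed as a relative Euler-number pairing and hence depends only on $[\Sigma]$, for such $\Sigma$ one has $sl(\bdry R(\Lambda), [\Sigma]) = sl(\bdry R(\Lambda), [R(\Lambda)]) = -\chi(R(\Lambda))$. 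Substituting into the Bennequin inequality yields $-\chi(R(\Lambda)) \le -\chi(\Sigma)$, i.e. $\chi(\Sigma) \le \chi(R(\Lambda))$. As this holds for every Seifert surface $\Sigma$ in the class $[R(\Lambda)]$, the surface $R(\Lambda)$ maximizes Euler characteristic---equivalently minimizes genus---among them, which is the assertion.

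The one step needing a careful word is the homological invariance of $sl$; I would make it precise by recalling that $sl(\bdry R(\Lambda), \Sigma)$ is the obstruction to extending over $\Sigma$ a trivialization of $\xi$ along $\bdry R(\Lambda)$ determined by the contact framing, so it is literally a pairing with the class $[\Sigma]$. The remaining care is purely bookkeeping: when $\bdry R(\Lambda)$ has several components (so $\Sigma$ may be disconnected) I would phrase ``minimizes genus'' through Euler characteristic, or equivalently through the Thurston norm of the class $[R(\Lambda)]$, to keep the comparison unambiguous. I do not expect a substantive obstacle beyond this, as the argument is essentially a two-line consequence of the cited facts.
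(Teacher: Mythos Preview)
Your proposal is correct and follows essentially the same approach as the paper: combine the identity $sl(\bdry R(\Lambda), [R(\Lambda)]) = -\chi(R(\Lambda))$ with the Bennequin bound of Proposition~\ref{prop:bennybound} applied to any Seifert surface $\Sigma$ in the same relative homology class, then conclude $\chi(\Sigma) \le \chi(R(\Lambda))$. Your additional remarks on the homological invariance of $sl$ and the multi-component case are helpful elaborations but not departures from the paper's argument.
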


\begin{proof}
    Since $sl(\bdry R(\Lambda), [R(\Lambda)]) = -\chi(R(\Lambda))$ as observed above, 
    any other Seifert surface $\Sigma$ for $L$ that is homologous to $R(\Lambda)$ would have to satisfy $sl(\bdry R(\Lambda), [R(\Lambda)]) \leq -\chi(\Sigma)$ by the Bennequin bound of Proposition~\ref{prop:bennybound}.  Hence $R(\Lambda)$ minimizes genus.
\end{proof}

\begin{question}
    If a Legendrian graph $\Lambda$ is non-loose but $\bdry R(\Lambda)$ is loose, must $R(\Lambda)$ still minimize genus in its homology class? 
\end{question}

\begin{remark}
    Note that if $R(\Lambda)$ were compressible, then one may find an overtwisted disk in a convex product neighborhood over $R(\Lambda)$.
    Furthermore, if $\bdry R(\Lambda)$ is loose while $R(\Lambda)$ is not minimal genus, then the proof of Proposition~\ref{prop:bennybound} (see eg.\ proof of \cite[Theorem 4.15]{Etnyre-IntroContactLect}) adapts to show that any minimal genus Seifert surface for $\bdry R(\Lambda)$ is loose.
\end{remark}

\section{Legendrian graphs and non-looseness}

For the fundamentals of contact topology including the theory of Legendrian and transverse links as well as convex surface theory, we refer the reader to  \cite{GeiBook}.
For the basics of the theory of non-loose knots in overtwisted contact manifolds, we refer the reader to \cite{Etnyre-knotsinOTctctstr}.

Let us also recall the following elementary lemma.

\begin{lemma}\label{lem:connectedsum}
    Let $(M_1, \xi_1)$ and $(M_2,\xi_2)$ be contact $3$--manifolds.  If the contact connected sum $(M_1 \# M_2, \xi_1 \# \xi_2)$ is tight, then both $(M_1, \xi_1)$ and $(M_2,\xi_2)$ are tight.
\end{lemma}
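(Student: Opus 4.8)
The plan is to prove the contrapositive: if either summand is overtwisted, then so is the contact connected sum. By the symmetry of the construction it suffices to treat the case that $(M_1,\xi_1)$ is overtwisted and show $(M_1 \# M_2, \xi_1 \# \xi_2)$ is overtwisted.

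First I would recall the construction of the contact connected sum: one removes the interiors of Darboux balls $B_i \subset (M_i,\xi_i)$ and glues $(M_1 \setminus \Int B_1, \xi_1)$ to $(M_2 \setminus \Int B_2, \xi_2)$ along an orientation-reversing diffeomorphism of their boundary $2$--spheres that is a contactomorphism for the induced convex structures (each boundary sphere being standard, with a single dividing curve). Under this construction the restriction of $\xi_1 \# \xi_2$ to the image of $M_1 \setminus \Int B_1$ agrees with $\xi_1\vert_{M_1 \setminus \Int B_1}$; that is, $(M_1 \setminus \Int B_1, \xi_1)$ contact-embeds in $(M_1 \# M_2, \xi_1 \# \xi_2)$. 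I would take as known that the contact connected sum is well defined up to contactomorphism, so exhibiting overtwistedness of a single model of it suffices.

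Next, suppose $D \subset M_1$ is an overtwisted disk for $\xi_1$. Since $D$ is compact, I can choose a point $p \in M_1 \setminus D$ and, by Darboux's theorem, a Darboux ball $B_1$ around $p$ small enough to be disjoint from $D$. Forming the connected sum using this $B_1$, the disk $D$ lies in $M_1 \setminus \Int B_1$ and hence persists as an overtwisted disk in $(M_1 \# M_2, \xi_1 \# \xi_2)$. Therefore the connected sum is overtwisted. Running the same argument with the roles of $M_1$ and $M_2$ exchanged completes the proof.

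The only point requiring care — the ``main obstacle,'' such as it is — is the claim that the Darboux ball used to form the connected sum may be chosen disjoint from the overtwisted disk. This is immediate from compactness of $D$ together with Darboux's theorem, which furnishes arbitrarily small Darboux balls about any chosen point; everything else is formal, relying only on the observation that an overtwisted disk in a contact submanifold remains an overtwisted disk in the ambient contact manifold.
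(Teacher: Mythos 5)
Your proof is correct and takes essentially the same approach as the paper: the contrapositive, with an overtwisted disk in one summand persisting into the connected sum. The only cosmetic difference is that you choose the Darboux ball disjoint from the given overtwisted disk and invoke well-definedness of the connected sum, whereas the paper asserts directly that an overtwisted disk exists in the complement of any Darboux ball; these amount to the same thing.
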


\begin{proof}
    Say $(M_1, \xi_1)$ is not tight.  Then it contains an overtwisted disk. Moreover, it contains an overtwisted disk in the complement of any Darboux ball.   Thus the overtwisted disk persists in the connected sum with $(M_2,\xi_2)$ making $(M_1 \# M_2, \xi_1 \# \xi_2)$ overtwisted.
\end{proof}

\subsection{Surfaces, Ribbons, and Legendrian Realization}

Here we present versions of Legendrian Realization for graphs, cf.\ \cite[Theorem 2.1]{HKM-tightctctstrfiberedhyp3mfld}.
For this, we say a graph $G$ embedded in a connected surface $S$ with boundary  is {\em non-isolating} if every component of $S-G$ contains a component of $\bdry S$.  Observe that this implies that $G$ is a subgraph of a {\em spine} $\bar{G}$ of $S$, a graph onto which $S$ deformation retracts.
When \cite{HKM-tightctctstrfiberedhyp3mfld} defines `non-isolating' for graphs in convex surfaces, they require that the graph transversally intersects the dividing curves and any univalent leaves must lie on the dividing curves.  However, this constraint on univalent leaves is unnecessary.

Recall that an oriented {\em convex} surface $\Sigma$ in a contact $3$-manifold $(M,\xi)$ 
has a dividing set $\Gamma$, an (isotopy class of) embedded multicurve that chops $\Sigma$ into surfaces $\Sigma_+$ and $\Sigma_-$ so that all tangencies with $\xi$ of sign $\pm$ are in $\Sigma_\pm$.   Furthermore there is 
 a vector field $v$ transverse to $\Sigma$ whose flow preserves $\xi$.  

Any singular foliation $\calF$ on $\Sigma$ is also said to be {\em divided by} the multicurve $\Gamma$ if there is an $I$-invariant contact structure $\xi'$ on $\Sigma \times I$ where $\calF = \chi\vert_{\Sigma \times \{0\}}$.
By Giroux Flexibility \cite{girouxconvex} (see also \cite[Theorem 4.8.11]{GeiBook} or \cite[Theorem 3.4]{honda-classificationI}), if $\calF$ is divided by $\Gamma$, then there is an isotopy $\phi_t$ for $t \in [0,1]$ of $\Sigma$ in a neighborhood of $\Sigma$ that fixes $\Gamma$ so that $\phi_0(\Sigma)=0$, $\xi\vert_{\phi_1(\Sigma)} = \calF$, and $\phi_t(\Sigma)$ is transverse to $v$ for all $t$.

\begin{lemma}\label{lem:legendrianrealization}
    Let $\Sigma$ be a closed convex surface with dividing set $\Gamma$. Then for any choice of spines $G$ of the components of $\Sigma \cut \Gamma$, $\Sigma$ is isotopic rel-$\Gamma$ through convex surfaces to a surface $\Sigma'$ so that the union of the spines $G$ is now a Legendrian graph $\Lambda'$ and each component $\Sigma'_0$ of $\Sigma' \cut \Gamma$ is a ribbon of the Legendrian graph $\Lambda'_0=\Lambda' \cap \Sigma'_0$.
\end{lemma}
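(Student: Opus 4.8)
The plan is to reduce everything to the Giroux Flexibility statement quoted above, which permits us to realize on $\Sigma$, after an isotopy rel $\Gamma$ through convex surfaces, any singular foliation divided by $\Gamma$. So it suffices to construct a singular foliation $\calF$ on $\Sigma$ such that: (i) $\calF$ is divided by $\Gamma$; (ii) the union $G$ of the chosen spines is a union of leaves and singular points of $\calF$; and (iii) for each component $\Sigma_0$ of $\Sigma \cut \Gamma$, the leaves of $\calF\vert_{\Sigma_0}$ deformation retract $\Sigma_0$ onto $G_0 := G \cap \Sigma_0$. Given such an $\calF$, let $\phi_t$ be the Giroux Flexibility isotopy with $\phi_0 = \Id$, $\xi\vert_{\phi_1(\Sigma)} = \calF$, and each $\phi_t(\Sigma)$ transverse to $v$; put $\Sigma' = \phi_1(\Sigma)$ and $\Lambda' = \phi_1(G)$. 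Then $\Sigma'$ is convex with dividing set $\Gamma$; by (ii) the graph $\Lambda'$ is everywhere tangent to $\xi$, hence Legendrian; and by (iii) each $\Sigma'_0 = \phi_1(\Sigma_0)$ deformation retracts onto $\Lambda'_0 := \Lambda' \cap \Sigma'_0$ along its characteristic foliation, so $\Sigma'_0$ is a ribbon of $\Lambda'_0$. (If one insists that a ribbon be tangent to $\xi$ along all of $\Lambda'_0$ rather than merely retract to it along $\xi\vert_{\Sigma'_0}$, a further $C^0$-small isotopy supported near $\Lambda'$, changing neither the dividing set nor the exterior, arranges this.)

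To construct $\calF$: since each $G_0$ is a spine of $\Sigma_0$, the complement $\Sigma_0 \cut G_0$ is a disjoint union of half-open collars of the components of $\bdry \Sigma_0 \subset \Gamma$; in particular $G$ is non-isolating in the sense recalled before the lemma. On a component $\Sigma_0 \subset \Sigma_+$ with spine $G_0$ having vertex set $V_0$ and edge set $E_0$, place a positive elliptic singularity at each vertex, a positive hyperbolic singularity at the midpoint of each edge, declare each edge of $G_0$ to be the union of the two stable separatrices of its hyperbolic point, and extend $\calF$ over the collar $\Sigma_0 \cut G_0$ so that the remaining (unstable) separatrices together with all other leaves exit $\Sigma_0$ monotonically and transversally across $\bdry\Sigma_0$, with no closed leaves. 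On a component $\Sigma_0 \subset \Sigma_-$ perform the mirror construction, with negative elliptic and hyperbolic singularities and every leaf entering $\Sigma_0$ monotonically from $\bdry\Sigma_0$. The index bookkeeping is automatic: the total index $\#V_0 - \#E_0 = \chi(G_0) = \chi(\Sigma_0)$ is exactly what Poincar\'e--Hopf requires of a line field transverse to $\bdry\Sigma_0$. Finally choose a vector field directing $\calF$, together with an area form, so that its divergence is positive at the singular points of $\Sigma_+$ (scaling the hyperbolic saddles appropriately) and throughout $\Sigma_+$, negative throughout $\Sigma_-$, and so that the vector field points from $\Sigma_-$ into $\Sigma_+$ across $\Gamma$; by Giroux's criterion this shows $\calF$ is divided by $\Gamma$, giving (i), while (ii) and (iii) hold by construction.

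The step that requires care is the construction of $\calF$ on the collars $\Sigma_0 \cut G_0$: one must interpolate between the prescribed germ of $\calF$ along $G_0$ and the prescribed transverse behavior along $\Gamma$ while (a) introducing no closed leaves, so that the retraction in (iii) exists, and (b) keeping the divergence single-signed on each of $\Sigma_\pm$, so that Giroux's criterion applies. As each such collar is an annulus joining a component of $\Gamma$ to the boundary of a regular neighborhood of $G_0$, this is the same gradient-like interpolation used in the usual proof of Legendrian Realization (cf.\ \cite{HKM-tightctctstrfiberedhyp3mfld} and \cite{girouxconvex}), here also arranged to record the retraction onto $G_0$ and with the divergence sign fixed by inserting the appropriate expansion on $\Sigma_+$ and contraction on $\Sigma_-$. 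This produces the desired $\calF$, and hence $\Sigma'$ and $\Lambda'$, completing the proof.
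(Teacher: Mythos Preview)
Your proof is correct and follows the same overall strategy as the paper: reduce to Giroux Flexibility by exhibiting a singular foliation $\calF$ on $\Sigma$ divided by $\Gamma$ which, once realized as the characteristic foliation of $\Sigma'$, makes $G$ Legendrian and each $\Sigma'_0$ a ribbon.  The difference lies in the choice of $\calF$.  You build a Morse--Smale type foliation with isolated singularities (elliptic at vertices of $G$, hyperbolic at edge midpoints) so that the edges of $G$ become separatrices; this is the construction appearing in the standard proofs of Legendrian Realization for graphs.  The paper instead takes the non-generic foliation whose singular locus is \emph{all} of $G$, with flow lines the spanning arcs of the annuli of $\Sigma \cut G$ running transversely across $\Gamma$ from $G \cap \Sigma_+$ to $G \cap \Sigma_-$.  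The paper's choice makes each $\Sigma'_0$ tangent to $\xi$ along $\Lambda'_0$ immediately, so the ribbon condition (tangency along the spine plus retraction along the characteristic foliation) holds on the nose.  Your Morse foliation instead leaves $\Sigma'_0$ transverse to $\xi$ along the interiors of the edges, which is exactly why you need the parenthetical extra $C^0$-small isotopy near $\Lambda'$ to obtain the tangency and with it a clean leafwise retraction.  That extra step is legitimate, so both arguments succeed; the paper's is simply more direct for the ribbon conclusion.
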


\begin{proof}
    This is basically just an application to the spines of $\Sigma \cut \Gamma$ of the Legendrian Realization for graphs \cite[Theorem 2.1]{HKM-tightctctstrfiberedhyp3mfld} (a straightforward extension of Legendrian realization for curves \cite[Theorem 3.7]{honda-classificationI} which itself is a consequence of Giroux Flexibility).  The only thing to observe is that a foliation indeed exists on $\Sigma$ realizable as a characteristic foliation $\calF$ that has singular set equal to $G$, where the flow lines are the union of spanning arcs of the annuli of $\Sigma \cut G$ flowing transverse to $\Gamma$ from $G \cap \Sigma_+$ to $G \cap \Sigma_-$. Having isotoped $\Sigma$ rel-$\Gamma$ to $\Sigma'$ so that $\xi\vert_{\Sigma'} = \calF$, $G$ is realized as the Legendrian graph $\Lambda'$ and each component of $\Sigma' \cut \Gamma$ is a ribbon of its component of $\Lambda'$.
\end{proof}

\begin{lemma}\label{lem:legspinesofribbons}
    Let $R$ be the ribbon of a Legendrian graph.  Then any spine of $R$ may be realized as a Legendrian graph $\Lambda$ so that $R(\Lambda)$ is isotopic to $R$ rel-$\bdry$.
\end{lemma}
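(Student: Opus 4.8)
The plan is to reduce this to Lemma~\ref{lem:legendrianrealization}, applied not to $R$ itself (which is not convex, being tangent to $\xi$ along a spine) but to the convex boundary of a standard neighborhood of $R$, and then to string the resulting isotopies together so that everything stays relative to $\partial R$.

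First I would choose a Legendrian graph $\Lambda_0$ with $R = R(\Lambda_0)$ and pass to a standard contact neighborhood $N = N(\Lambda_0)$: a contact handlebody with convex boundary $S = \partial N$ whose dividing set is $\Gamma = \partial R$, the transverse boundary link of the ribbon. The structural input I need here is that, because the ribbon foliation determines the germ of $\xi$ along $R$, such an $N$ may be taken so that $N \cong R \times [-1,1]$ with $R$ the slice $R \times \{0\}$, with $\partial R = \partial R \times \{0\}$ the original transverse link, and with $\Gamma = \partial R \times \{0\}$ sitting on the annular part $\partial R \times [-1,1]$ of $S$. Then $S \cut \Gamma$ has exactly two components, the positive region $S_+ = (R \times \{1\}) \cup (\partial R \times [0,1])$ and the negative region $S_- = (R \times \{-1\}) \cup (\partial R \times [-1,0])$, each diffeomorphic to $R$ and each with boundary $\partial R \times \{0\} = \partial R$. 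The isotopy $s \mapsto (R \times \{s\}) \cup (\partial R \times [0,s])$, with $s$ running from $1$ down to $0$, is an ambient isotopy of $N$, relative to $\partial R$, carrying $S_+$ to $R$; so $R$ is isotopic rel-$\partial R$ inside $N$ to $S_+$.

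Next I would transport the given spine $G$ of $R$ along the inverse of this isotopy to a spine $G_+$ of $S_+$, pick any spine $G_-$ of $S_-$, and apply Lemma~\ref{lem:legendrianrealization} to the closed convex surface $S$ with dividing set $\Gamma$ and these chosen spines of the components of $S \cut \Gamma$. This yields an isotopy of $S$ (ambient, supported near $S$), through convex surfaces and relative to $\Gamma$, carrying $S$ to a convex surface $S'$ on which $\Lambda' := G_+ \cup G_-$ is realized as a Legendrian graph and each component of $S' \cut \Gamma$ is the ribbon of its part of $\Lambda'$. Set $\Lambda := \Lambda' \cap S'_+$; this is the Legendrian realization of $G_+$, hence (up to isotopy) of $G$, and $R(\Lambda) = S'_+$.

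It remains to compose. The second isotopy fixes $\Gamma = \partial R$ pointwise, hence is relative to $\partial S_+ = \partial R$ and carries $S_+$ to $S'_+ = R(\Lambda)$; together with the first isotopy ($R \simeq S_+$ rel-$\partial R$ inside $N$) this gives an ambient isotopy carrying $R$ to $R(\Lambda)$ relative to $\partial R$, and simultaneously carrying $G$ to $\Lambda$. Thus $\Lambda$ realizes the chosen spine and $R(\Lambda)$ is isotopic to $R$ rel-$\bdry$, as required. I expect the only genuine obstacle to be the first step — producing a contact handlebody neighborhood of the ribbon with $\partial R$ as its dividing set and $R$ boundary-parallel rel-$\partial R$ to the positive region $S_+$ — which is part of the standard local theory of neighborhoods of Legendrian graphs and partial open books (cf.\ \cite{HKM-tightctctstrfiberedhyp3mfld, HKM-ctctinvtSFH}); once that model is in hand, the remaining steps are just careful bookkeeping to keep each isotopy relative to $\partial R$.
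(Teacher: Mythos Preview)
Your proposal is correct and follows essentially the same route as the paper's proof: pass to the convex boundary $S=\partial N(\Lambda_0)$ with dividing set $\partial R$, push the given spine of $R$ onto $S_+$ via the rel-$\partial R$ isotopy $R\simeq S_+$, and then invoke Lemma~\ref{lem:legendrianrealization}. Your write-up is more explicit about the product model for $N$ and the composition of isotopies, but the argument is the same.
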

\begin{proof} 
Suppose $R$ is the ribbon $R(\Lambda_0)$.  Then $S = \bdry N(\Lambda_0)$ is a convex surface divided by $\bdry R$ and $R$ is isotopic to $S_+$ rel-$\bdry R$. Given a spine of $R$, let $G$ be its image in $S_+$ under the isotopy.  The isotopy  of Lemma~\ref{lem:legendrianrealization} takes $S$ rel-$\bdry R$ to a convex surface $S'$ in which $G$ is now a Legendrian graph $\Lambda$ so that $S'_+$ is the ribbon $R(\Lambda)$.
\end{proof}

\begin{lemma}\label{lem:LegRealizationGraphs}
Given an open book supporting the contact $3$-manifold $(M,\xi)$ and containing a non-isolating connected graph $G$ in the interior of a page,
the open book may be isotoped so that it supports $(M,\xi)$ with the graph $G$ now Legendrian in its page. 

Conversely, for any connected Legendrian graph $\Lambda$ in a closed contact $3$-manifold $(M,\xi)$, there is a supporting open book that contains $\Lambda$ as a non-isolating graph in a page.
\end{lemma}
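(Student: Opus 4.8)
The plan is to treat the two directions separately: for the forward direction I would deduce a Legendrian realization statement for non-isolating graphs on pages from the convex-surface version already established in Lemma~\ref{lem:legendrianrealization}, and for the converse I would run Giroux's construction of a supporting open book from a contact cell decomposition (cf.\ \cite{GeiBook}), arranged so that the given Legendrian graph sits in the $1$-skeleton.

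For the forward direction, let $(S,\phi)$ support $(M,\xi)$ and let $G$ be a non-isolating connected graph in the interior of a page $S_0$. I would form the closed surface $\hatS = S_0 \cup_B S_\pi$ obtained by gluing two opposite pages along the binding $B$, and recall the standard fact that, after a preliminary isotopy, $\hatS$ is convex with dividing set $B$, with the two pages $S_0$ and $S_\pi$ as the two components of $\hatS \cut B$, and with a product neighborhood in which the open book near $\hatS$ is modeled explicitly. Since $G$ is non-isolating it lies in a spine $\bar G$ of $S_0$. Applying Lemma~\ref{lem:legendrianrealization} to $\hatS$, with $\bar G$ chosen as the spine of the $S_0$-component, produces an isotopy of $\hatS$ -- supported in that product neighborhood and fixing $B$ -- to a convex surface $\hatS'$ in which $\bar G$, hence $G$, is realized as a Legendrian graph lying in the interior of the image $S_0'$ of $S_0$, with $S_0'$ a ribbon of that Legendrian graph. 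Because the isotopy is localized near $\hatS$ and fixes the binding, it is carried by an ambient isotopy that drags the whole open book fibration along, so $S_0'$ becomes a page of the resulting open book; that open book still supports $(M,\xi)$ in the usual sense, and $G$ is Legendrian on the page $S_0'$.

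For the converse, let $\Lambda$ be a connected Legendrian graph in the closed contact manifold $(M,\xi)$. I would first enlarge $\Lambda$ to a connected Legendrian graph $G \supseteq \Lambda$ that is the $1$-skeleton of a contact cell decomposition of $(M,\xi)$: begin with a cell decomposition of $M$ in which $\Lambda$ is a subcomplex of the $1$-skeleton, refine until the $1$-skeleton is non-isolating in the $2$-skeleton, and then run Giroux's perturbations -- Legendrian realize the $1$-skeleton while leaving the already-Legendrian $\Lambda$ untouched, make the $2$-cells convex, and arrange $\tb = -1$ along each attaching curve. Giroux's construction applied to this cell decomposition yields an open book supporting $(M,\xi)$ whose page is a ribbon $R(G)$ of the $1$-skeleton $G$. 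Since $\Lambda \subseteq G$ and $G$ is a spine of $R(G)$, the graph $\Lambda$ lies in the interior of this page as a subgraph of a spine -- hence is non-isolating there -- and it is Legendrian because $G$ is.

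I expect the main obstacle to be making precise the final step of the forward direction: certifying that after the convex-surface isotopy of Lemma~\ref{lem:legendrianrealization} one genuinely has an open book supporting $(M,\xi)$ with $G$ Legendrian on its page, rather than merely a convex surface splitting $M$ into two handlebodies. The delicate point is that the Legendrian-realization isotopy is not a contact isotopy, so its effect on the open book has to be controlled by hand; this is done by keeping the isotopy confined to the product neighborhood of $\hatS$, where the open book is modeled explicitly and the contact structure is $I$-invariant, so that the pages are isotoped along with $\hatS$ while the compatibility conditions ($d\alpha > 0$ on pages, $\alpha > 0$ on the binding) persist, and by using the standard convention that ``supports $\xi$'' is meant up to isotopy of the contact structure. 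Equivalently, one may phrase the whole forward argument directly as a Legendrian realization of non-isolating graphs on pages -- adjusting the contact structure within its isotopy class, keeping it adapted to $(S,\phi)$, so that $G$ falls into the Legendrian skeleton of the characteristic foliation induced on $S_0$ -- which repackages the same flexibility input. A secondary, routine point is that a prescribed Legendrian graph can always be completed to the Legendrian $1$-skeleton of a contact cell decomposition, which is just Giroux's existence argument carried out with $\Lambda$ in place from the start.
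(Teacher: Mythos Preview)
Your forward direction is essentially the paper's own argument: doubling two pages into a closed convex surface with dividing set the binding, extending $G$ to a spine $\bar G$, and invoking Legendrian realization (Lemma~\ref{lem:legendrianrealization}) to isotope the page so that $\bar G$ becomes Legendrian and the page a ribbon; the paper likewise notes that this isotopy carries the entire open book along while still supporting $(M,\xi)$.

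Your converse, however, takes a genuinely different route. The paper argues via partial open books: the exterior $(M_\Lambda,\xi_\Lambda)$ has convex boundary, hence is supported by a partial open book $(S,P,\phi_P)$ in the sense of Honda--Kazez--Mati\'c, and since that exterior is also the exterior of the ribbon $R(\Lambda)$, the partial open book extends across $R(\Lambda)$ to an honest open book for $(M,\xi)$ containing $\Lambda$ non-isolatingly in a page. You instead run Giroux's contact-cell-decomposition construction with $\Lambda$ built into the $1$-skeleton from the start, obtaining a page that is a ribbon of a Legendrian graph $G \supseteq \Lambda$. Your approach is more classical and self-contained (it needs only Giroux's original existence argument, not the HKM relative theory), whereas the paper's approach is shorter and meshes directly with the partial-open-book framework used elsewhere in the article --- in particular, it makes the connection to reattachments and the Tight Reattachment Property immediate, since $(M_\Lambda,\xi_\Lambda)$ is already packaged as a partial open book. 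Both are valid; the one care point in yours is the ``routine'' claim that a prescribed Legendrian graph can be completed to the Legendrian $1$-skeleton of a contact cell decomposition without disturbing it, which is indeed standard but should be stated rather than merely asserted.
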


Etnyre also discusses the Legendrian realization of a spine of a page of an open book in \cite[Remark 30]{Etnyre-LecturesOpenBooks} and thereabouts.

\begin{proof}
By \cite[Lemma 2.5]{LevQPDecompLagrang}, using Legendrian Realization for graphs \cite[Theorem 2.1]{HKM-tightctctstrfiberedhyp3mfld}, if $G$ is a non-isolating graph in a page $S$ of an open book supporting a contact structure $(M, \xi)$, then $S$ may be isotoped to make $G$ Legendrian.  In particular $G$ extends to a spine $\bar{G}$ of $S$, and $S$ may be isotoped to make the spine Legendrian so that $S$ is a ribbon, cf.\ Lemma~\ref{lem:legspinesofribbons}.
Indeed, as the above isotopy is induced from the Legendrian realization of this spine as a graph in the closed convex surface made from two pages (where the dividing set corresponds to the binding $\bdry S$) the entire open book may be isotoped so that it supports $(M,\xi)$ while the spine $\bar{G}$ is Legendrian and the page $S$ is a ribbon.  Since $G$ is now a Legendrian subgraph of the spine $\bar{G}$, its Legendrian framing is the framing by $S$. 
Moreover there is a regular neighborhood of $G$ in the interior of $S$ that is nearly a ribbon of $G$, only needing slight modification it encounters $\bar{G}-G$.  (A slight isotopy of the interior of $S$ would allow for the ribbon of $G$ to be contained in $S$ at the expense of maintaining that $S$ is a ribbon of $\bar{G}$.)

On the other hand, given a connected Legendrian graph $\Lambda$ in a closed contact $3$-manifold $(M,\xi)$, its exterior $(M_\Lambda, \xi_\Lambda)$ is a contact $3$-manifold with convex boundary. As such, there is a partial open book supporting $(M_\Lambda, \xi_\Lambda)$.  Yet since the exterior $(M_\Lambda, \xi_\Lambda)$ may also be viewed as the exterior of a ribbon $R(\Lambda)$, the partial open book extends to an open book supporting $(M,\xi)$ that contains $\Lambda$ and $R(\Lambda)$ in a page.  Furthermore, by construction of the partial open books, $\Lambda$ is necessarily non-isolating in its page.
\end{proof}

\begin{remark}
 By \cite[Theorem 1.8]{LevQPDecompLagrang}, for any Seifert surface $S$ of a link in a $3$-manifold $Y$, there is a contact structure on $Y$ so that $S$ is the ribbon of some Legendrian graph.  However the Legendrian graph may have overtwisted complement. 
\end{remark}

\subsection{Proof of main theorem}

\begin{theorem}\label{thm:main}
    Let $\Lambda$ be a connected Legendrian graph in a closed contact $3$-manifold.  If $\Lambda$ has the Tight Reattachment Property,
    then the transverse link $\bdry R(\Lambda)$ is non-loose.
\end{theorem}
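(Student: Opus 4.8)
The plan is to prove that the exterior $M\cut N(\bdry R(\Lambda))$ of the transverse link is contactomorphic to a codimension-$0$ submanifold of the tight contact manifold supplied by the Tight Reattachment Property. Since any subset of a tight contact manifold is tight (an overtwisted disk in the subset is one in the ambient manifold), this shows $M\cut N(\bdry R(\Lambda))$ is tight, i.e.\ that $\bdry R(\Lambda)$ is non-loose.

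First I would arrange $\bdry R(\Lambda)$ in a convenient position: isotope the transverse link slightly off the convex surface $\bdry N(\Lambda)$ into $\Int N(\Lambda)$ (an isotopy that does not change the contactomorphism type of the exterior). This gives
\[ M\cut N(\bdry R(\Lambda)) \;\cong\; (M_\Lambda,\xi_\Lambda)\ \cup_{\bdry N(\Lambda)}\ \big(N(\Lambda)\cut N(\bdry R(\Lambda))\big), \]
where the second piece is a contact handlebody with a solid tube removed that deformation retracts onto $\Lambda$; call it $N^{\circ}$. Both pieces and the gluing along $\bdry N(\Lambda)$ depend only on $(M_\Lambda,\xi_\Lambda)$ together with the ribbon $R(\Lambda)$, not on anything about how $N(\Lambda)$ sits in $M$.

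The main step is to realize this same manifold inside a tight reattachment $(M_{\Lambda,\psi},\xi_{\Lambda,\psi})$, which exists by the TRP. By construction the locus of the regluing is a surface $\widehat R\subset M_{\Lambda,\psi}$ whose boundary is the transverse link $\widehat T$, the image of $\bdry R(\Lambda)$; and, because $\psi$ is the identity in a collar of the dividing curves, $\widehat R$ is a copy of $R(\Lambda)$ as a framed ribbon surface. Applying Legendrian Realization (Lemmas~\ref{lem:legendrianrealization} and~\ref{lem:legspinesofribbons}) to a spine of $\widehat R$, I would take $\widehat R$ to be the ribbon of a Legendrian graph $\Lambda_\psi\subset M_{\Lambda,\psi}$. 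Cutting $M_{\Lambda,\psi}$ along $\widehat R$ undoes the regluing and recovers $(M_\Lambda,\xi_\Lambda)$, so the exterior of $\Lambda_\psi$ is $(M_\Lambda,\xi_\Lambda)$, and after pushing $\widehat T$ into $\Int N(\Lambda_\psi)$ as before,
\[ M_{\Lambda,\psi}\cut N(\widehat T) \;\cong\; (M_\Lambda,\xi_\Lambda)\ \cup_{\bdry N(\Lambda_\psi)}\ \big(N(\Lambda_\psi)\cut N(\widehat T)\big). \]
It then remains to identify the two handlebody pieces, $N(\Lambda_\psi)\cut N(\widehat T)\cong N^{\circ}$, compatibly with the identifications $\bdry N(\Lambda_\psi)\cong\bdry N(\Lambda)$ of the boundaries carrying the dividing curves $\widehat T\cong\bdry R(\Lambda)$; granting this, the two displayed manifolds are contactomorphic, and hence $M\cut N(\bdry R(\Lambda))\cong M_{\Lambda,\psi}\cut N(\widehat T)\subset(M_{\Lambda,\psi},\xi_{\Lambda,\psi})$ is tight.

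The main obstacle is exactly that last identification. It amounts to a relative version of Lemma~\ref{lem:legspinesofribbons}: that the standard contact neighborhood of a Legendrian graph, together with the dividing-curve data it induces on its convex boundary, is determined up to contactomorphism by the framed isotopy type of its ribbon — so that the regluing map $\psi$, being the identity near $\bdry R(\Lambda)$, leaves no trace once the solid tube around the transverse link is excised. Care is needed near $\widehat T$, where one must check the contactomorphism matches the standard transverse-neighborhood models and therefore glues to the contactomorphism of exteriors; the connectedness hypothesis on $\Lambda$ enters here and in the applicability of Lemma~\ref{lem:legspinesofribbons}. This is the step I expect to require the most care, and it is the point at which the flexibility in the choice of reattachment $\psi$ is absorbed.
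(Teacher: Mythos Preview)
Your approach has a genuine gap: the two exteriors you aim to identify are \emph{not} contactomorphic in general --- they need not even be homeomorphic. Both $M\cut N(\bdry R(\Lambda))$ and $M_{\Lambda,\psi}\cut N(\widehat T)$ contain the open ribbon as a properly embedded convex surface, and cutting along it recovers the same piece (essentially $M_\Lambda$); but the two manifolds are obtained from that piece by regluing with maps that differ exactly by $\psi$. Concretely, take $\Lambda$ so that $R(\Lambda)$ is a once-punctured torus page of an open book (e.g.\ the Seifert surface of a trefoil). Then $\bdry R(\Lambda)$ is the binding, its exterior is the mapping torus of the monodromy $\phi$, while the exterior of $\widehat T$ in $M_{\Lambda,\psi}$ is the mapping torus of $\phi\psi$. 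For generic $\psi\in\mathrm{MCG}(R(\Lambda),\bdry R(\Lambda))$ these fibered $3$--manifolds are not diffeomorphic. So the ``compatibility'' you flag as the main obstacle is not merely delicate --- it fails. (The annulus case you might test first is misleading: there all the mapping tori happen to be $T^2\times I$.) The underlying reason is that contactomorphisms of the standard handlebody $N(\Lambda)$ act diagonally on $\bdry_+\times\bdry_-$, so no contactomorphism of $N^\circ$ can absorb a twist by $\psi$ on one side only.

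The paper's proof is accordingly much less direct. It places $\Lambda$ in a page of a supporting open book $(S,\phi)$, uses the TRP to factor $\phi=hg$ with $(S,h)$ tight and $g$ supported on $R(\Lambda)$, and then proves a separate Theorem~\ref{thm:tightfactorization}: starting from the non-looseness of the binding of $(R(\Lambda),g\vert_{R(\Lambda)})$, it passes through a Murasugi sum and a connected sum with the tight $(M_{S,h},\xi_{S,h})$, then applies Baldwin's cobordism to realize $(M_{S,hg},\xi_{S,hg})$ via Legendrian $(-1)$--surgeries on the connected sum, and finally invokes Wand's theorem that $(-1)$--surgery preserves tightness (after an auxiliary embedding into a closed tight manifold via Colin's gluing). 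The hard input is Wand's result; there does not appear to be a soft embedding argument of the kind you propose.
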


\begin{proof}
    Say $\Lambda$ is a connected Legendrian graph in $(M,\xi)$.
    By Lemma~\ref{lem:LegRealizationGraphs}, there is an open book $(S,\phi)$ supporting $(M,\xi)$ that contains $\Lambda$ as a non-isolating Legendrian graph in a page.
    In particular,  the exterior $(M_\Lambda, \Gamma_\Lambda, \xi_\Lambda)$ of $\Lambda$ is supported by a partial open book $(S,P,\phi_P)$
    that extends to an open book $(S, \phi)$ for $(M,\xi)$.  Here, $P$ is a subsurface of $S$ and $\phi_P \colon P \to S$ is a homeomorphism to its image that is the identity on $\bdry P \cap \bdry S$ that extends across $S\cut P$ to the homeomorphism $\phi \colon S \to S$.

    By the Tight Reattachment Property there is a map $\psi \colon S \to S$ with support in $S \cut P$ so that the open book $(S,\phi\psi)$ supports a tight contact structure.  Setting $h=\phi\psi$ and $g= \psi^{-1}$ so that $hg=\phi$, an application of Theorem~\ref{thm:tightfactorization} below then further shows that the transverse link $\bdry R(\Lambda)$ is also non-loose.    
\end{proof}

\begin{theorem}\label{thm:tightfactorization}
    Let $\Lambda$ be a connected Legendrian graph in $(M_{S,hg},\xi_{S,hg})$ that is non-isolating in a page of the supporting open book $(S,hg)$.  
    Suppose $(M_{S,h},\xi_{S,h})$ is tight and the support of $g$ is contained in a regular neighborhood of $\Lambda$ in $S$.
    Then the transverse link $\bdry R(\Lambda)$ has tight complement. 
\end{theorem}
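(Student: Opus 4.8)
The plan is to leverage the factorization $hg=\phi$ together with the observation that $\bdry R(\Lambda)$ is the binding-like transverse link associated to the partial open book structure on the exterior $(M_\Lambda,\Gamma_\Lambda,\xi_\Lambda)$, which is the common exterior of $\Lambda$ and $R(\Lambda)$. The key point is that since the support of $g$ lies in a regular neighborhood of $\Lambda$ in the page $S$, the complement $M \setminus \bdry R(\Lambda)$ can be reconstructed from the tight manifold $(M_{S,h},\xi_{S,h})$ by a contact cut-and-paste operation supported away from an overtwisted disk, so that any overtwisted disk in the complement of $\bdry R(\Lambda)$ would persist into $(M_{S,h},\xi_{S,h})$, contradicting its tightness.

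First I would set up the geometry carefully: the complement of a transverse neighborhood of $\bdry R(\Lambda)$ retracts onto $M_\Lambda$, so it suffices to show $(M_\Lambda,\xi_\Lambda)$ — viewed now as the complement of the transverse link rather than of a handlebody neighborhood — is tight, or rather that the contact manifold obtained by gluing back an appropriate tight neighborhood of $\bdry R(\Lambda)$ is tight. Since $\bdry R(\Lambda)$ is a transverse link, its standard transverse neighborhood is tight; the subtlety is that the contact structure on $M \setminus \bdry R(\Lambda)$ is $\xi_{S,hg}$ restricted, and I want to compare it to $\xi_{S,h}$. Because $g$ is supported in a regular neighborhood $\nbhd(\Lambda)$ in $S$, and $\Lambda$ is non-isolating, the pages $S$ used in the open book $(S,hg)$ and the open book $(S,h)$ agree outside $\nbhd(\Lambda)$, and the monodromies $hg$ and $h$ differ only by a diffeomorphism supported in $\nbhd(\Lambda)$. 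The open book $(S,h)$ thus contains $\Lambda$ (and its ribbon, hence $\bdry R(\Lambda)$) sitting in a page in exactly the same way, up to the monodromy twist localized near $\Lambda$.

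The main step is then to argue that the complement of $\bdry R(\Lambda)$ in $(M_{S,hg},\xi_{S,hg})$ is contactomorphic to the complement of $\bdry R(\Lambda)$ in $(M_{S,h},\xi_{S,h})$. Here is where I expect the real work: cutting the open book $(S,hg)$ along the ribbon $R(\Lambda)$ in a page produces the partial open book for $(M_\Lambda,\xi_\Lambda)$, and the portion of the monodromy that gets removed in this cutting is precisely $\phi_P$, which is the part of the monodromy supported on $P \subset \nbhd(\Lambda)$. Since $g$ is supported in $\nbhd(\Lambda)$ as well, after cutting out the ribbon the residual data — the partial monodromy on $S \setminus P$ — is the same whether we started from $hg$ or from $h$, up to a diffeomorphism supported in the region that has been cut away. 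Thus $(M_\Lambda,\xi_\Lambda)$ sits inside $(M_{S,h},\xi_{S,h})$ as the complement of $\bdry R(\Lambda)$, so its tightness is inherited from the tightness of $(M_{S,h},\xi_{S,h})$ by the standard argument (an overtwisted disk in $M_\Lambda$ would be disjoint from a Darboux ball and hence survive into the closed tight manifold). I would make "the same up to a diffeomorphism supported in the cut-away region" precise by tracking exactly how the partial open book for the exterior is extracted from the full open book, using that $R(\Lambda)$ can be taken to be the ribbon of the spine containing $\Lambda$ (Lemma~\ref{lem:legspinesofribbons}, Lemma~\ref{lem:LegRealizationGraphs}) so that the relevant "flipped" subsurface $P$ is contained in a thin neighborhood of $\Lambda$. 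The hard part will be verifying that the identification of exteriors respects the transverse link $\bdry R(\Lambda)$ — i.e.\ that under this contactomorphism the dividing curves $\Gamma_\Lambda$ on $\bdry N(\Lambda)$ correspond correctly — so that gluing back a standard tight neighborhood of the transverse link on both sides yields genuinely contactomorphic closed (or link-complement) manifolds, not merely diffeomorphic ones.
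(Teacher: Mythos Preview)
Your central claim --- that the complement of $\bdry R(\Lambda)$ in $(M_{S,hg},\xi_{S,hg})$ is contactomorphic to the complement of $\bdry R(\Lambda)$ in $(M_{S,h},\xi_{S,h})$ --- is false, and this is the gap.

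You correctly observe that the handlebody exterior $(M_\Lambda,\xi_\Lambda)$ is supported by the partial open book $(S,P,\phi_P)$ with $\phi_P = (hg)|_P = h|_P$, so $M_\Lambda$ is literally the same whether we start from $(S,hg)$ or $(S,h)$.  But $M_\Lambda$ is \emph{not} the complement of the transverse link $\bdry R(\Lambda)$; it is the complement of the handlebody $N(\Lambda)$, which is strictly smaller.  The complement of $\bdry R(\Lambda)$ is obtained from $M_\Lambda$ by gluing back $N(\Lambda)\setminus \bdry R(\Lambda)$ along $\bdry N(\Lambda)\setminus \Gamma$, and the gluing map is exactly what distinguishes $M_{S,hg}$ from $M_{S,h}$: it changes by the reattachment $\psi$ supported on $\bdry_\pm N(\Lambda)$ away from the dividing set.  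So your argument only recovers the trivial implication (already noted in the paper) that TRP $\Rightarrow$ $\Lambda$ non-loose, not that $\bdry R(\Lambda)$ is non-loose.

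For a concrete counterexample to the contactomorphism claim, take $\Lambda$ to be a Legendrian spine of the page $S$.  Then $R(\Lambda)$ is the page and $\bdry R(\Lambda)$ is transversally isotopic to the binding, so the complement of $\bdry R(\Lambda)$ is the mapping torus $S_\phi$.  The mapping tori $S_{hg}$ and $S_h$ are in general not even homeomorphic (e.g.\ $S$ a once-punctured torus, $h=\mathrm{id}$, $g$ pseudo-Anosov).  The paper's proof avoids this obstruction entirely: it builds the complement of $\bdry R(\Lambda)$ in $(M_{S,hg},\xi_{S,hg})$ from the tight binding complement of $(F,g|_F)$ via Murasugi sum, contact connected sum with the tight $(M_{S,h},\xi_{S,h})$, and then Baldwin's Legendrian $(-1)$--surgery link together with Wand's theorem and Colin's gluing to pass from the connected sum to $(M_{S,hg},\xi_{S,hg})$ while preserving tightness.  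None of this machinery can be bypassed by the partial-open-book bookkeeping you propose.
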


\begin{proof} 
    First let $F$ be a regular neighborhood of $\Lambda$ in $S$ that contains the support of $g$. Since $\Lambda$ is connected, $F$ is connected. Then observe that the binding of the open book $(F, g \vert_{F})$ supporting $(M_{F, g \vert_{F}}, \xi_{F, g \vert_{F}})$ is a transverse link with tight complement (see proof of \cite[Lemma 3.1]{EtnyreVelaVick}).  
    Furthermore, the supported contact structure $\xi_{F, g \vert_{F}}$ may be taken so that any chosen spine of any page is a Legendrian graph for which the page is a ribbon.  As such, we may regard the page $F \times \{1/2\}$ in $(M_{F, g \vert_{F}}, \xi_{F, g \vert_{F}})$ as the ribbon of a Legendrian graph $\Lambda_F$.
    Letting $R(\Lambda_F)$ be a thinner ribbon contained in the interior of the page $F \times \{1/2\}$, the transverse link $\bdry R(\Lambda_F)$ is transversally isotopic to the binding of $(F, g \vert_{F})$.  Thus $\bdry R(\Lambda_F)$ is a transverse link with tight complement in the interior of the page $F \times \{1/2\}$.

    Since $g \vert_{S \cut F}$ is trivial and $F$ is non-isolating, 
    the extension of $(F, g\vert_{F})$ to $(S,g)$ may be viewed as a Murasugi sum of an open book $(S', id)$ with trivial monodromy upon $(F, g \vert_{F})$.   
    Moreover this Murasugi sum may be done within a neighborhood of the page $F \times \{0\}$ so that it is disjoint from $\Lambda_F$ and the copy $R(\Lambda_F)$ of $F$ that it bounds in $F \times \{1/2\}$. So after this Murasugi sum, 
    we have a natural inclusion $F \times \{1/2\} \hookrightarrow S \times \{1/2\}$ giving an inclusion of the Legendrian graph $\Lambda_F$, its ribbon $R(\Lambda_F)$, and transverse boundary $\bdry R(\Lambda_F)$ into the page $S \times \{1/2\}$ of the supported contact manifold $(M_{S,g}, \xi_{S,g})$.  We denote their images as $\Lambda_g$, $R(\Lambda_g)$, and $\bdry R(\Lambda_g)$.   
    The complement of $\bdry R(\Lambda_g)$ in $(M_{S,g}, \xi_{S,g})$ is then the connected sum of  $(M_{S',id}, \xi_{S',id})$ and the tight complement of the transverse link $\bdry R(\Lambda_F)$ in $(M_{F, g \vert_{F}}, \xi_{F, g \vert_{F}})$.
    Since $(M_{S',id}, \xi_{S',id})$ is a connected sum of several copies of $(S^1 \times S^2, \xi_{std})$, it is tight.    
    Since the two summands are tight, the complement of $\bdry R(\Lambda_g)$ tight too by Lemma~\ref{lem:connectedsum}.
    
    Now consider the contact connected sum of $(M_{S,h},\xi_{S,h})$ with $(M_{S,g},\xi_{S,g})$ along a neighborhood of a point in the interior of the page $S \times \{0\}$ of each open book. Since $\Lambda_g$ and $R(\Lambda_g)$ are contained in the page $S\times \{1/2\}$ of its open book, they are disjoint from the summing sphere.  So let $\Lambda_{h\#g}$ be the Legendrian graph that is the image of $\Lambda_g$ in this sum and let $R(\Lambda_{h\#g})$ be the corresponding ribbon.  Since $(M_{S,h},\xi_{S,h})$ is tight by hypothesis and the complement of $\bdry R(\Lambda_g)$ is tight, we now have that the transverse link $\bdry R(\Lambda_{h\#g})$ has tight complement  by Lemma~\ref{lem:connectedsum} as well.
    
    Finally, Baldwin \cite{baldwin-steincobordisms} exhibited a Legendrian link $\mathbb{L}$ in $(M_{S,hg},\xi_{S,hg})$ upon which $(+1)$ contact surgery yields the connected sum  $(M_{S,h},\xi_{S,h})\#(M_{S,g},\xi_{S,g})$.  In fact, the surgery dual link $\mathbb{L}^*$ may be regarded as a Legendrian link in (a neighborhood of) the connected sum of the two  pages $S\times \{0\}$ from each summand so that $(-1)$ contact surgery on $\mathbb{L}^*$ yields $(M_{S,hg},\xi_{S,hg})$. In particular, for $t \in (\epsilon, 1-\epsilon)$ with $0 < \epsilon < 1/2$, the pages $S \times \{t\}$ of $(M_{S,h},\xi_{S,h})$ are identified with the the pages $S \times \{t/2\}$ of $(M_{S,hg},\xi_{S,hg})$, and the pages $S \times \{t\}$ of $(M_{S,g},\xi_{S,g})$ are identified with the the pages $S \times \{t/2+1/2\}$ of $(M_{S,hg},\xi_{S,hg})$.  Thus after the $(-1)$ contact surgery on $\mathbb{L}^*$, the Legendrian graph $\Lambda_{h\#g}$ and ribbon $R(\Lambda_{h\#g})$ become the Legendrian graph $\Lambda_{hg}$ and ribbon $R(\Lambda_{hg})$
    in the page $S \times \{3/4\}$.  As the complement of $\bdry R(\Lambda_{hg})$ is obtained by $(-1)$ contact surgery on the link $\mathbb{L}^*$ in the tight complement of $\bdry R(\Lambda_{h\#g})$, we would like to say that this implies it is tight as well.
    
    While Wand shows that $(-1)$ contact surgery on a closed manifold preserves tightness \cite{wand}, the complement of $\bdry R(\Lambda_{h\#g})$ is not closed.  Nevertheless, we may adapt the above construction so that the complement of $\bdry R(\Lambda_{h\#g})$ is embedded in a closed tight manifold.  Then by \cite{wand}, the $(-1)$ contact surgery on $\mathbb{L}^*$ now in this closed manifold will yield a closed tight manifold in which the complement of $\bdry R(\Lambda_{hg})$ is embedded.

    To do this, we first observe that, from the argument in the 2nd paragraph of the proof of \cite[Theorem 1.6]{Etnyre-knotsinOTctctstr}, the contact complement of our initial transverse link as the binding $B$ of an open book $(F,g\vert_F)$ embeds in a universally tight contact manifold as a component of the complement of pre-Lagrangian tori $T$.  Let $N'$ be the union of the complement of $B$ and these tori $T$, the `closed complement' of $B$. Hence $N'$ is universally tight and contains the complement of $B$ as its interior.

    Let $N''$ be a second copy of $N'$ (or other similarly constructed universally tight manifold with incompressible boundary) and glue it to the original $N'$ so that the contact structures match up along collars of $\bdry N'$ and $\bdry N''$.  Colin shows that this resulting manifold is also universally tight \cite{colin}.  For short, we may regard this as replacing a neighborhood of $B$ by $N''$.  Carry this replacement through the above construction to where we eventually have a neighborhood of $\bdry R(\Lambda_{h\#g})$ replaced by $N''$.  The construction now gives the exterior of $\bdry R(\Lambda_{h\#g})$ embedded in a tight manifold as desired.  Thus we may apply \cite{wand} to obtain the exterior of $\bdry R(\Lambda_{hg})$ embedded in a tight manifold.  Hence the transverse link $\bdry R(\Lambda_{hg})$ is tight.
\end{proof}

\begin{remark}
An alternative approach would be to use the (Stein) cobordism from $(M_{S,h},\xi_{S,h}) \sqcup (M_{S,g}, \xi_{S,g})$ to $(M_{S,hg}, \xi_{S,hg})$ of \cite[Section 8.2]{BEVHM} instead of \cite{baldwin-steincobordisms}.  However, the exposition would take a bit more clarity on the required Hopf deplumbings involved.  
\end{remark}

\subsection{Fusions of Legendrian graphs and Murasugi sums}\label{sec:fusion}

Given the standard contact structure $\xi =\ker (dz-y\,dx)$ on $\R^3$, let $\xi^n$ be the contact structure on $\R^3$ induced from the $n$-fold cyclic branched cover along the $z$-axis for each positive integer $n$.  Observe that $(\R^3, \xi^n)$ is in fact contactomorphic to $(\R^3,\xi)$. Let $\lambda$ be the non-negative $x$-axis and let $\lambda^n$ be its lift to $(\R^3,\xi^n)$.   Any valence $n$ vertex of a Legendrian graph has a neighborhood contactomorphic to $(\R^3, \xi^n,\lambda^n)$.

Next, let $\lambda_+$ be a Legendrian arc starting at the origin whose front projection continues along the positive $x$-axis to $x=1$ and thereafter has positive slope. Similarly, let $\lambda_-$ be a Legendrian arc starting at the origin whose front projection continues along the negative $x$-axis to $x=-1$ and thereafter has negative slope.  Note that  $\lambda_+$ is Legendrian isotopic to $\lambda$ rel-$\bdry \lambda$ and we may choose $\lambda_-$ to be a reflection of $\lambda_+$ through the origin.
Define $\lambda^n_+$ and $\lambda^n_-$ similarly to $\lambda^n$ via cyclic branched covers along the $z$-axis.  Hence any valence $n$ vertex of a Legendrian graph has a neighborhood contactomorphic to each $(\R^3, \xi^n,\lambda^n_+)$ and $(\R^3, \xi^n,\lambda^n_-)$.

\begin{defn}[Fusions of Legendrian graphs]
Suppose for each $i=+,-$ that $v_i$ is a valence $n$ vertex of a Legendrian graph $\Lambda_i$ in $(M_i,\xi_i)$.  Choose a contactomorphism $\phi_i$ of a neighborhood $B_i$ of $\Lambda_i$ at $v_i$ with $(\R^3, \xi^n,\lambda^n_i)$.
The  {\em Legendrian fusion (of order $n$)} $\Lambda_+ \ast \Lambda_-$ of $\Lambda_+$ and $\Lambda_-$ along vertices $v_+$ and $v_-$ with respect to these chosen contactomorphisms $\phi_+$ and $\phi_-$ is defined follows.
Form the connected sum $(M_+ \# M_-, \xi_+ \# \xi_-)$ by identifying the complements of $\phi_+^{-1}(\R^3\vert_{z<0}, \xi^n,\lambda^n_+)$ and $\phi_-^{-1}(\R^3\vert_{z>0}, \xi^n,\lambda^n_-)$ along their boundaries so that 
$\phi_+^{-1}(\R^3\vert_{z\geq0}, \xi^n,\lambda^n_i) \cup \phi_-^{-1}(\R^3\vert_{z\leq0}, \xi^n,\lambda^n_i)$ is contactomorphic to $(\R^3, \xi^n,\lambda^n_+ \cup \lambda^n_-)$.  From the inclusions of $(M_+, \xi_+)-\phi_+^{-1}(\R^3\vert_{z<0}, \xi^n,\lambda^n_+)$ and $(M_, \xi_-)- \phi_-^{-1}(\R^3\vert_{z>0}, \xi^n,\lambda^n_-)$ into $(M_+ \# M_-, \xi_+ \# \xi_-)$, we may regard $\Lambda_+$ and $\Lambda_-$ as Legendrian graphs in $(M_+ \# M_-, \xi_+ \# \xi_-)$ whose intersection is a single point $v$, the image of the points $v_+$ and $v_-$.  Then the fusion $\Lambda_+ \ast \Lambda_-$ is the union of $\Lambda_+$ with $\Lambda_-$ in $(M_+ \# M_-, \xi_+ \# \xi_-)$.  Figure~\ref{fig:LegendrianFusion}
illustrates this Legendrian fusion operation.
\end{defn}

\begin{figure}
    \centering
    \includegraphics[width=.7\textwidth]{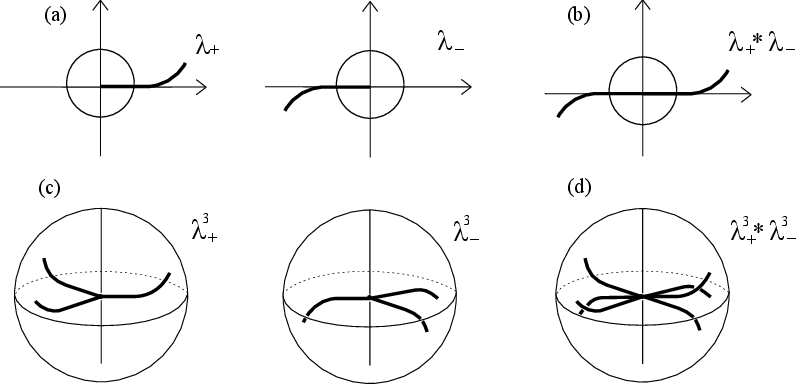}
    \caption{(a) The front projection of the Legendrian arcs $\lambda_+$, and $\lambda_-$.  (b) The Legendrian fusion $\lambda_+ \ast \lambda_-$. (c) Legendrian graphs $\lambda_+^n$ and $\lambda_-^n$  for $n=3$.  (d) The Legendrian fusion $\lambda_+^n \ast \lambda_-^n$ for $n=3$.}
    \label{fig:LegendrianFusion}
\end{figure}

\begin{defn}[Murasugi sums of ribbons of Legendrian graphs]
At the point of fusion, $\Lambda_+ \ast \Lambda_-$ is locally contactomorphic to $\lambda_+^n \ast \lambda_-^n$ which is the $n$-fold branched cover along the $z$-axis of $\lambda_+ \ast \lambda_- = \lambda_+ \cup \lambda_-$ in $(\R^3,\xi)$.  We may choose ribbons $R(\lambda_+)$, $R(\lambda_-)$, and $R(\lambda_+ \ast \lambda_-)$
so that for some ball neighborhood $B$ of the origin, $R(\lambda_+)-B$ and $R(\lambda_-)-B$ are disjoint with union equalling $R(\lambda_+ \ast \lambda_-)-B$. Observe that within $B$, the surfaces $R(\lambda_+)$ and $R(\lambda_-)$ form a clasp intersection and project to subsurfaces of $R(\lambda_+ \ast \lambda_-) \cap B$. Nevertheless, we may regard $R(\lambda_+ \ast \lambda_-)$ as obtained from trimming $R(\lambda_+)$ and $R(\lambda_-)$ near their vertices and then joining to form $R(\lambda_+ \ast \lambda_-)$. Topologically, we obtain $R(\lambda_+ \ast \lambda_-)$ as a $2$-Murasugi sum of $R(\lambda_+)$ and $R(\lambda_-)$, and we define this to be what's meant by the $2$-Murasugi sum of the ribbons $R(\lambda_+)$ and $R(\lambda_-)$. Figure~\ref{fig:RibbonSums}(Top) shows this operation.  As $R(\lambda^n_+\ast \lambda^n_-)$ is the $n$-fold branched cover of $R(\lambda_+ \ast \lambda_-)$, we define $R(\lambda^n_+\ast \lambda^n_-)$ to be the {\em $2n$-Murasugi sum} of the ribbons $R(\lambda^n_+)$ and $R(\lambda^n_-)$, shown in Figure~\ref{fig:RibbonSums}(Bottom) for $n=3$.   For the topological definition of a Murasugi sum of Seifert surfaces, see \cite{gabai-murasugi} for example.  Thus we immediately have the following.
\begin{lemma}\label{lem:ribbonmurasugi}
        Let $\Lambda = \Lambda_+ \ast \Lambda_-$ be a Legendrian fusion of $\Lambda_+$ and $\Lambda_-$.
    Then $R(\Lambda)$ is a Murasugi sum of $R(\Lambda_+)$ and $R(\Lambda_-)$. \qed
\end{lemma}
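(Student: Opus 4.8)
The plan is to read the statement off directly from the local model at the fusion vertex together with the two definitions just given (of the Legendrian fusion $\Lambda_+\ast\Lambda_-$ and of the $2n$-Murasugi sum of ribbons); the only genuine task is to confirm that the global assembly of $R(\Lambda_+)$ and $R(\Lambda_-)$ into $R(\Lambda)$ is a Murasugi sum of Seifert surfaces in the sense of \cite{gabai-murasugi}.

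First I would fix the local picture. By definition of the order-$n$ fusion $\Lambda_+\ast\Lambda_-$, the fusion vertex $v$ has a ball neighborhood $B$ in $(M_+\#M_-,\xi_+\#\xi_-)$ contactomorphic to $(\R^3,\xi^n,\lambda_+^n\cup\lambda_-^n)$, itself the $n$-fold cyclic branched cover along the $z$-axis of $(\R^3,\xi,\lambda_+\cup\lambda_-)$; under this identification $\Lambda_\pm\cap B$ becomes $\lambda_\pm^n$, while outside $B$ the graphs $\Lambda_+$ and $\Lambda_-$ are disjoint. Next I would choose the three ribbons compatibly: pick $R(\Lambda_\pm)$ so that inside $B$ it is the chosen lift of $R(\lambda_\pm)$ and so that $R(\Lambda_+)\cut B$ and $R(\Lambda_-)\cut B$ are disjoint, and pick $R(\Lambda)$ to equal $(R(\Lambda_+)\cup R(\Lambda_-))\cut B$ outside $B$ and the chosen lift of $R(\lambda_+\ast\lambda_-)$ inside $B$. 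This is legitimate because the ribbon of a Legendrian graph is well defined up to contact isotopy rel boundary (cf.\ Lemma~\ref{lem:legspinesofribbons}), so it suffices to realize the sum for one such choice, and the explicit front-projection models of Figures~\ref{fig:LegendrianFusion} and~\ref{fig:RibbonSums} supply compatible ribbons for $\lambda_\pm$ and for $\lambda_+\ast\lambda_-$, hence by branched covering also for $\lambda_\pm^n$ and $\lambda_+^n\ast\lambda_-^n$.

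With these choices the decomposition is visibly that of a Murasugi sum. Taking $\bdry B$ (lifted to the branched cover) as the separating sphere, $R(\Lambda_+)$ lies on one side and $R(\Lambda_-)$ on the other; after a small isotopy trading the clasp inside $B$ for a clean intersection on $\bdry B$ — the standard local picture of a Murasugi sum along a disk — the two surfaces meet $\bdry B$ in a common $2n$-gon $D$, with $R(\Lambda)=R(\Lambda_+)\cup_D R(\Lambda_-)$ and the $2n$ boundary arcs of $D$ lying alternately on $\bdry R(\Lambda_+)$ and on $\bdry R(\Lambda_-)$; outside $B$ the two summands are disjoint with union $R(\Lambda)$, so the gluing there is trivial. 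Matching the separating sphere, the $2n$-gon, and the alternation of its boundary arcs with the definition of a Murasugi sum of Seifert surfaces in \cite{gabai-murasugi} then gives the claim — the case $n=1$ being the $2$-Murasugi sum of Figure~\ref{fig:RibbonSums}(Top) and the general case its $n$-fold branched cover as in Figure~\ref{fig:RibbonSums}(Bottom).

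The only point needing care — and the reason the lemma is stated without a separate proof — is making the three ribbon choices simultaneously honest: each $R(\Lambda_\pm)$ must be a genuine Legendrian ribbon (tangent to $\xi$ along $\Lambda_\pm$, otherwise transverse, retracting under a $\xi$-tangent flow) while also restricting near $v$ to the prescribed lift of $R(\lambda_\pm)$, and one must check that passing to the $n$-fold branched cover of the local $2$-Murasugi picture indeed produces a $2n$-gon glued along arcs of the standard form. Both are immediate from the explicit local models.
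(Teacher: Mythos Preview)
Your proposal is correct and follows essentially the same approach as the paper. The paper states this lemma with a \qed\ and no separate proof, treating it as immediate from the preceding definition of the $2n$-Murasugi sum of ribbons via the local branched-cover model; your write-up simply unpacks those definitions and verifies explicitly that the resulting decomposition matches Gabai's definition of a Murasugi sum, which is exactly what the paper leaves implicit. (One minor quibble: the well-definedness of the ribbon up to isotopy is more elementary than Lemma~\ref{lem:legspinesofribbons}, which concerns Legendrian realization of spines, but this does not affect the argument.)
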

\end{defn}

\begin{figure}
    \centering
    \includegraphics[width=.9\textwidth]{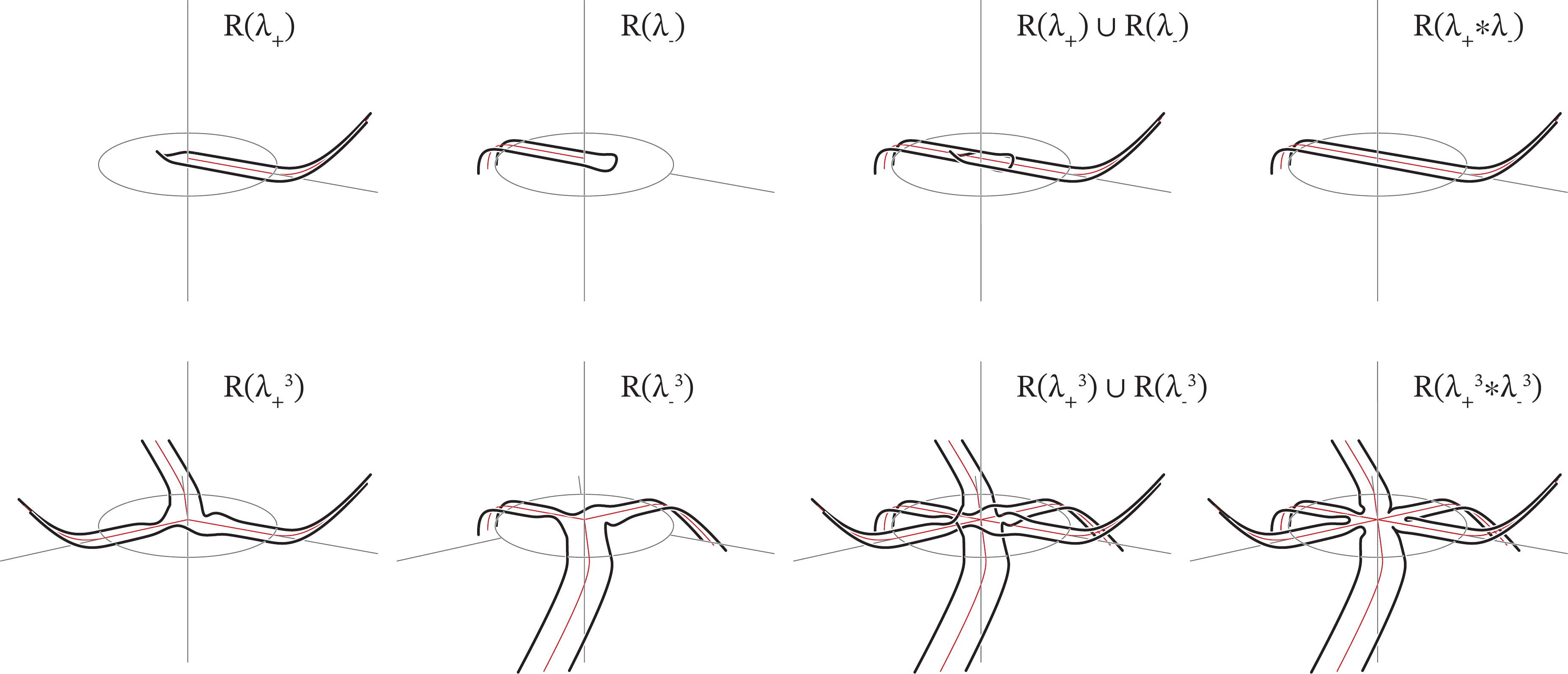}
    \caption{  (Top) Ribbons of $\lambda_+$ and $\lambda_+$, the union of these ribbons $R(\lambda_+) \cup R(\lambda_-)$ and the ribbon of their fusion $R(\lambda_+ \ast \lambda_-)$ are illustrated in the standard $(\R^3,\xi)$.  Also shown are the $z$-axis, a circle in the $xy$-plane, and a porition of the positive $x$-axis outside the circle.   (Bottom) The above, but with $\lambda_+^n$ and $\lambda_-^n$ for $n=3$, in the cyclic branched cover $(\R^3,\xi^n)$.  \\
    ``The ribbon of a fusion is a Murasugi sum of ribbons.''}
    \label{fig:RibbonSums}
\end{figure}

Note that we cannot have either $R(\lambda_+)$ or $R(\lambda_-)$ as a subsurface of $R(\lambda_+ \ast \lambda_-)$. Were, say $R(\lambda_+)$ contained in $R(\lambda_+ \ast \lambda_-)$, then generically the portion of $R(\lambda_+)$ that flows to its vertex (at the origin) would flow to other points of $\lambda_-$ in $R(\lambda_+ \ast \lambda_-)$. However, this would contradict the non-integrability of the contact structure.

\begin{theorem}\label{thm:murasugisums}
    Suppose $\Lambda$ is the Legendrian fusion $\Lambda_+ \ast \Lambda_-$ of two Legendrian graphs.
    If each $\Lambda_+$ and $\Lambda_-$ have the Tight Reattachment Property, then $\Lambda$ has the Tight Reattachment Property.
\end{theorem}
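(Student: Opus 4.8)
The plan is to carry out the fusion at the level of open books, using the paper's reformulation (after \cite{licatamathews}) that a reattachment of a Legendrian graph exterior is the same data as an extension of that exterior's partial open book to a full open book. First I would build compatible open books for the two pieces. Since $\Lambda_+$ has the TRP, there is a tight closed contact manifold $(M_+',\xi_+')$ gotten by regluing $N(\Lambda_+)$ to the exterior $(M_{\Lambda_+},\xi_{\Lambda_+})$; by Lemma~\ref{lem:LegRealizationGraphs}, after a preliminary isotopy of the open book, I may choose a supporting open book $(S_+,\phi_+)$ for $(M_+',\xi_+')$ in which $\Lambda_+$ is a non-isolating Legendrian graph in a page and $\phi_+$ is the identity on a $2n$-gon disk $D_+\subseteq S_+$ that is a small neighborhood of the fusion vertex $v_+$, meeting $\Lambda_+$ in the $n$ half-edges at $v_+$ (here $n$ is the order of the fusion). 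Repeating this for $\Lambda_-$ yields $(S_-,\phi_-)$, a $2n$-gon $D_-$, and a tight $(M_-',\xi_-')$.

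Next I would perform the $2n$-Murasugi sum of these open books, modeled on the clasp-at-the-vertex picture of Figure~\ref{fig:RibbonSums}: identifying $D_+$ with $D_-$ gives the page $S=S_+\ast S_-$, and setting $\phi=\widetilde{\phi_+}\circ\widetilde{\phi_-}$ --- where $\widetilde{\phi_\pm}$ extends $\phi_\pm$ by the identity across $S\setminus S_\pm$, legitimately because $\phi_\pm$ is the identity on $D_\pm$ --- the pair $(S,\phi)$ is again an open book, by the open-book form of the Stallings--Gabai description of Murasugi sums \cite{gabai-murasugi}. Because the plumbing is concentrated near the fusion vertices, $\Lambda:=\Lambda_+\cup\Lambda_-$ is a non-isolating Legendrian graph in the page $S$: every complementary region of $\Lambda$ in $S$ is assembled from complementary regions of the $\Lambda_\pm$ in the $S_\pm$, each of which met the binding, and the plumbing only glues such regions along arcs of the new binding. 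By construction $\Lambda$ realizes the Legendrian fusion $\Lambda_+\ast\Lambda_-$, with $R(\Lambda)=R(\Lambda_+)\ast R(\Lambda_-)$ by Lemma~\ref{lem:ribbonmurasugi}. Moreover, deleting the interior of a neighborhood of $\Lambda$ from $(M_{S,\phi},\xi_{S,\phi})$ returns the exterior $(M_\Lambda,\xi_\Lambda)$ of the given fusion --- assembled from $(M_{\Lambda_+},\xi_{\Lambda_+})$ and $(M_{\Lambda_-},\xi_{\Lambda_-})$ by Gabai's gluing for Murasugi sums \cite{gabai-murasugi} --- so $(M_{S,\phi},\xi_{S,\phi})$ is a reattachment along $\Lambda$. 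It remains only to show this closed manifold is tight.

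For tightness I would rerun the factorization of Theorem~\ref{thm:tightfactorization}, which is cleaner here because every manifold involved is closed. Write $\phi=hg$ with $h=\widetilde{\phi_+}$ and $g=\widetilde{\phi_-}$. The monodromy $h$ is supported in $S_+\subseteq S$, so $(S,h)$ is the $2n$-Murasugi sum of $(S_+,\phi_+)$ with a trivial-monodromy open book, which amounts to attaching to the page of $(S_+,\phi_+)$ finitely many $1$-handles carrying trivial monodromy; hence $(M_{S,h},\xi_{S,h})\cong (M_+',\xi_+')\,\#\,\#^{k}(S^1\times S^2,\xi_{std})$, which is tight as a connected sum of tight manifolds (Colin \cite{colin}). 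Symmetrically $(M_{S,g},\xi_{S,g})$ is tight. By Baldwin \cite{baldwin-steincobordisms}, exactly as in the proof of Theorem~\ref{thm:tightfactorization}, $(M_{S,hg},\xi_{S,hg})$ is obtained from $(M_{S,h},\xi_{S,h})\#(M_{S,g},\xi_{S,g})$ --- tight again by Colin --- by $(-1)$-contact surgery on a Legendrian link; since this surgery is performed on a closed manifold, it preserves tightness by Wand \cite{wand}. Thus $(M_{S,\phi},\xi_{S,\phi})=(M_{S,hg},\xi_{S,hg})$ is tight, so $\Lambda=\Lambda_+\ast\Lambda_-$ has the TRP.

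I expect the main obstacle to be the second paragraph: pinning down that assembling the two tight fillings by a $2n$-Murasugi sum near the fusion vertices and then removing a neighborhood of $\Lambda$ returns precisely the exterior $(M_\Lambda,\xi_\Lambda)$ together with its dividing set $\partial R(\Lambda)$ --- i.e.\ that the Murasugi-summed open book genuinely witnesses a reattachment along the fusion --- along with the compatible choice of the open books $(S_\pm,\phi_\pm)$ whose monodromies are the identity on the $2n$-gons $D_\pm$. Once those identifications are in place, the tightness argument is a streamlined version of the closed-manifold core of Theorem~\ref{thm:tightfactorization}, and in particular avoids the delicate step there of embedding a manifold-with-boundary into a closed tight manifold before invoking \cite{wand}.
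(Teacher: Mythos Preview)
Your approach is correct but takes a genuinely different and considerably heavier route than the paper's.

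The paper works directly with the reattaching maps, never passing to open books. It observes that $\bdry_\pm N(\Lambda_\pm)$ sit as subsurfaces of $\bdry_\pm N(\Lambda)$, extends each tight reattaching map $\psi_\pm$ by the identity to $\hat\psi_\pm\colon \bdry_-N(\Lambda)\to\bdry_+N(\Lambda)$, and sets $\psi=\hat\psi_+\circ\hat\psi_-$. The key observation is then one line: the reattachment $(M_{\Lambda,\psi},\xi_{\Lambda,\psi})$ is simply the contact connected sum $(M_{\Lambda_+,\psi_+},\xi_{\Lambda_+,\psi_+})\#(M_{\Lambda_-,\psi_-},\xi_{\Lambda_-,\psi_-})$, hence tight. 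No open books, no Baldwin, no Wand.

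Your open-book construction ultimately encodes the same fact --- indeed, it is classical (Torisu; see also Etnyre's lectures) that the Murasugi sum of open books supports the connected sum of the contact manifolds, so your $(M_{S,\phi},\xi_{S,\phi})$ is already $(M_+',\xi_+')\#(M_-',\xi_-')$ and you could stop there. Routing instead through the factorization $\phi=hg$, Baldwin's cobordism, and Wand's theorem is valid but entirely unnecessary for this statement; you are essentially re-proving that a connected sum of tight manifolds is tight via heavy machinery. The identification you flag in your second paragraph (that removing $N(\Lambda)$ from the summed open book returns the fusion exterior) is exactly what the paper's direct argument makes transparent: since the fusion is a connected sum performed in a ball where both $\psi_\pm$ restrict to the identity, the composite reattachment visibly decomposes along the summing sphere. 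What your approach buys is a concrete open-book model for the tight reattachment, which could be useful elsewhere; what the paper's approach buys is a two-paragraph proof requiring nothing beyond the definition of the TRP and tightness of connected sums.
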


\begin{proof}
    By Lemma~\ref{lem:ribbonmurasugi}, $R(\Lambda)$ is a Murasugi sum of $R(\Lambda_+)$ and $R(\Lambda_-)$.
    Indeed, the natural inclusions of $\Lambda_+$ and $\Lambda_-$ into $M_+\# M_-$ extend to inclusions of their ribbons $R(\Lambda_+)$ and $R(\Lambda_-)$ as {\em nearly} subsurfaces of the ribbon $R(\Lambda) = R(\Lambda_+ \ast \Lambda_-)$.

    Let $N(\Lambda_i)$ be a thickening of $R(\Lambda_i)$ so that $\bdry N(\Lambda_i)$ is convex with dividing curves $\bdry R(\Lambda_i)$.  
    Again, the natural inclusions allow us to regard $N(\Lambda_i)$ as also a thickening of $\Lambda_i$ in $M_+ \# M_-$.  These can further be arranged so that $N(\Lambda_+)$ and $N(\Lambda_-)$ intersect within a ball neighborhood of the fusion vertex of $\Lambda = \Lambda_+ \cap \Lambda_-$
    so that the components $\bdry_\pm N(\Lambda_+)$ and the components $\bdry_\pm N(\Lambda_-)$ are naturally identified with subsurfaces of  $\bdry_\pm N(\Lambda)$.

    Suppose for each $i=+,-$ that $\Lambda_i$ has the TRP.  Then choose reattaching maps $\psi_i\colon \bdry_-N(\Lambda_i) \to \bdry_+N(\Lambda_i)$  so that $(M_{\Lambda_i,\psi_i}, \xi_{\Lambda_i,\psi_i})$, the resulting reattachment along $\Lambda_i$, is a tight manifold. With the natural inclusions of $N(\Lambda_i)$ into $M_+ \# M_-$, we may regard $\psi_i$ as a map from $\bdry_-N(\Lambda_i)$ to $\bdry_+N(\Lambda_i)$ there.  
    Then, after the identifications of $\bdry_\pm N(\Lambda_i)$ with subsurfaces of $\bdry_\pm N(\Lambda)$, let $\hat{\psi}_i\colon \bdry_-N(\Lambda) \to \bdry_+N(\Lambda)$ be the extension of $\psi_i$ by the identity. 
    Now define $\psi \colon \bdry_-N(\Lambda) \to \bdry_+N(\Lambda)$ as $\hat{\psi}_+ \circ \hat{\psi}_-$.  We then observe that the rettachement along $\Lambda$ by $\psi$ is the connected sum of the rettachments along $\Lambda_i$ by $\psi_i$ for $i=+,-$.  Since the connected sum of tight manifolds is tight, this shows that $\Lambda$ has the TRP.
\end{proof}

\subsection{Subgraphs and the TRP}

In general, non-looseness of a subset is conferred to any other subset that contains it.  If $T$ is a transverse link containing a a non-loose sub-link $T'$, then $T$ is non-loose as well.  Similarly, if $\Lambda$ is a Legendrian graph containing a non-loose subgraph $\Lambda'$, then $\Lambda$ is non-loose.  Here we observe that the TRP is also passed upwards from $\Lambda'$ to $\Lambda$.  With Theorem~\ref{thm:main}, this shows how this stronger sense of non-looseness of $T'=\bdry R(\Lambda')$ is passed to the non-looseness of $T=\bdry R(\Lambda)$ even though $T'$ is not contained in $T$.

\begin{theorem}~\label{thm:subgraphTRP}
    Let $\Lambda'$ be a connected subgraph of a connected Legendrian graph $\Lambda$ in a closed contact $3$-manifold.  If $\Lambda'$ has the TRP, then $\Lambda$ does too.
\end{theorem}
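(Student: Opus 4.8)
The plan is to re-run the open-book argument used for Theorem~\ref{thm:main}, but applied to a single open book that simultaneously displays both $\Lambda$ and $\Lambda'$ as non-isolating graphs in one page. The one genuinely new input is the elementary observation that the non-isolating condition passes to connected subgraphs: if $\Lambda$ is non-isolating in a compact surface $S$ with boundary and $\Lambda'\subseteq\Lambda$ is a connected subgraph, then every component of $S\setminus\Lambda'$ is a union of components of $S\setminus\Lambda$ glued along pieces of $\Lambda\setminus\Lambda'$, so it contains a component of $S\setminus\Lambda$ and hence meets $\bdry S$; thus $\Lambda'$ is non-isolating in $S$ as well.

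So first I would apply Lemma~\ref{lem:LegRealizationGraphs} to produce a supporting open book $(S,\phi)$ of the ambient closed contact $3$-manifold in which $\Lambda$ is a non-isolating Legendrian graph in a page $S$. Since $\Lambda'\subseteq\Lambda$ lies in $S$ it is automatically Legendrian, and by the observation above it is also non-isolating in $S$. Next I would fix regular neighborhoods $N(\Lambda')\subseteq\Int N(\Lambda)\subseteq S$ of the two graphs in the page, nested because $\Lambda'\subseteq\Lambda$. Then, exactly as in the proof of Theorem~\ref{thm:main}, the exterior of $\Lambda$ is supported by a partial open book $(S,P,\phi_P)$ and the exterior of $\Lambda'$ by a partial open book $(S,P',\phi_{P'})$, with $P=\overline{S\setminus N(\Lambda)}$ and $P'=\overline{S\setminus N(\Lambda')}$, both extending their partial monodromies to $\phi$ on $S$. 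Since the neighborhoods are nested, $S\setminus P'\subseteq S\setminus P$.

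Now I would invoke the hypothesis. Because $\Lambda'$ has the TRP, reading the TRP through the partial open book $(S,P',\phi_{P'})$ exactly as in the proof of Theorem~\ref{thm:main} yields a diffeomorphism $\psi\colon S\to S$ with support in $S\setminus P'$ such that the open book $(S,\phi\psi)$ supports a tight contact structure. Since $S\setminus P'\subseteq S\setminus P$, this same $\psi$ is supported in $S\setminus P$; in particular $\psi$ is the identity near $\bdry R(\Lambda)$, so it realizes a reattachment along $\Lambda$, and the resulting contact manifold is $(M_{S,\phi\psi},\xi_{S,\phi\psi})$, which is tight. Hence $\Lambda$ has the TRP.

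I expect the only delicate step to be the middle one: verifying that the partial open book of the exterior of $\Lambda'$ obtained from $(S,\phi)$ has its flexible region $S\setminus P'$ contained in the flexible region $S\setminus P$ for $\Lambda$, and that a tight reattachment of $\Lambda'$ is indeed recorded by a diffeomorphism of $S$ supported in $S\setminus P'$ making $(S,\phi\psi)$ tight. Both points rest on choosing the two regular neighborhoods nested within a common page, together with the dictionary between reattachments and monodromy modifications already established in the proof of Theorem~\ref{thm:main}; once that is in place, the witness $\psi$ for $\Lambda'$ is tautologically a witness for $\Lambda$. (A more hands-on alternative---converting a tight reattachment $M_{\Lambda',\psi'}$ directly into one for $\Lambda$ by modifying along the extra edges of $\Lambda\setminus\Lambda'$---looks messier, since a reattachment of $\Lambda$ folds $\bdry N(\Lambda)$ onto an interior copy of $R(\Lambda)$, not $R(\Lambda')$.)
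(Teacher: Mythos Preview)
Your argument is correct, but it is more elaborate than necessary and, amusingly, the paper takes precisely the ``hands-on alternative'' you set aside at the end. The paper works directly from the definition of the TRP without passing through open books at all: since $\Lambda'\subseteq\Lambda$, one may nest regular neighborhoods $N(\Lambda')\subseteq N(\Lambda)$ so that $\bdry_\pm N(\Lambda')$ are naturally identified with subsurfaces of $\bdry_\pm N(\Lambda)$. A tight reattaching map $\psi'\colon\bdry_-N(\Lambda')\to\bdry_+N(\Lambda')$ then extends by the identity to $\psi\colon\bdry_-N(\Lambda)\to\bdry_+N(\Lambda)$, and because the extension is trivial outside $N(\Lambda')$ the reattached manifold $(M_{\Lambda,\psi},\xi_{\Lambda,\psi})$ is literally the same contact manifold as $(M_{\Lambda',\psi'},\xi_{\Lambda',\psi'})$, hence tight. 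That is the whole proof.

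Your open-book route reaches the same endpoint --- the diffeomorphism $\psi$ supported in $N(\Lambda')\subseteq N(\Lambda)$ in the page is exactly the extension-by-identity above, viewed through the monodromy dictionary --- but it front-loads machinery (Lemma~\ref{lem:LegRealizationGraphs}, partial open books, the reattachment/monodromy correspondence from the proof of Theorem~\ref{thm:main}) that the direct argument avoids entirely. The concern you raise in your parenthetical, that reattaching along $\Lambda$ ``folds $\bdry N(\Lambda)$ onto an interior copy of $R(\Lambda)$, not $R(\Lambda')$,'' dissolves once you see that extending $\psi'$ by the identity simply reglues $N(\Lambda)\setminus N(\Lambda')$ back the way it came; nothing happens there. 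So the direct approach is both shorter and conceptually cleaner here, while your route has the virtue of making the connection to the partial-open-book language explicit.
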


\begin{proof}
    Suppose $\Lambda$ is a connected Legendrian graph in $(M,\xi)$.
    Since $\Lambda'$ is a subgraph of $\Lambda$, the components $\bdry_\pm N(\Lambda')$ are naturally identified with subsurfaces of $\bdry_\pm N(\Lambda)$.
    Since $\Lambda'$ has the TRP, choose reattaching map $\psi' \colon \bdry_-N(\Lambda') \to \bdry_+N(\Lambda')$ so that the resulting attachment is a tight manifold $(M_{\Lambda',\psi'})$.  With the natural identifications, $\psi'$ extends by the identity to an attaching map $\psi \colon \bdry_-N(\Lambda) \to \bdry_+N(\Lambda)$.  Since $\psi$ is an extension of $\psi'$ by the identity, the reattached manifold $(M_{\Lambda, \psi}, \xi_{\Lambda, \psi})$ is just the tight manifold $(M_{\Lambda', \psi'}, \xi_{\Lambda', \psi'})$.
\end{proof}

\subsection{Comultiplication and the LOSS invariant}
\label{sec:LOSS}

Theorem~\ref{thm:tightfactorization} developed from an adaptation of Baldwin's comultiplication of the Heegaard Floer contact invariant \cite{baldwin-comultiplicity} to a sort of comultiplication for the LOSS invariant \cite{LOSS} given in Theorem~\ref{thm:LOSScomultiplication} below.  One may view Proposition~\ref{prop:ctctinvtreattachment} as an algebraic version of Theorem~\ref{thm:main} where a transverse knot has non-trivial LOSS invariant if a reattachment along a Seifert surface yields a manifold with non-trivial contact invariant.  Since this may be of independent interest, we keep it here.

\bigskip

Let $(Y_{S,h}, \xi_{S,h})$ be the contact $3$-manifold determined by the open book $(S,h)$.  A choice of basis of arcs for $S$ determines a chain complex $\widehat{CF}(-Y_{S,h})$ and an special element ${\bf x}_{S,h} \in \widehat{CF}(-Y_{S,h})$.  The homology class of this element is the Ozsvath-Szabo contact invariant $c(S,h) \in \widehat{HF}(-Y_{S,h})$  of this contact manifold \cite{OzSz-HFctct}.  Notably, if $(Y_{S,h}, \xi_{S,h})$ contains an overtwisted disk, then $c(S,h)=0$. Hence $c(S,h) \neq 0$ implies $(Y_{S,h}, \xi_{S,h})$ is tight.

A non-separating oriented simple closed curve $K$ in the page $S$ determines an oriented Legendrian knot $K_h$ in $(Y_{S,h}, \xi_{S,h})$. If the basis of arcs for $S$ is chosen to intersect $K$ exactly once, the chain complex $\widehat{CF}(-Y_{S,h})$ is adapted to $K_h$. 
When $K_h$ is null-homologous in $Y_{S,h}$, it provides a filtration of $\widehat{CF}(-Y_{S,h})$ yielding a filtered complex $\widehat{CFK}(-Y_{S,h},K_h)$ and associated homology $\widehat{HFK}(-Y_{S,h},K_h)$.
The LOSS invariant of $K_h$ may be regarded as the homology class $\widehat{\mathcal{L}}(S,h,K) \in \widehat{HFK}(-Y_{S,h},K_h)$ of the special element ${\bf x}_{S,h}$ \cite{LOSS}. Notably, if the complement of $K_h$ contains an overtwisted disk, then $\widehat{\mathcal{L}}(S,h,K)=0$.  So if $\widehat{\mathcal{L}}(S,h,K)\neq 0$, then the complement of $K_h$ is tight.  Accordingly if $c(S,h) \neq 0$, then it follows that $\widehat{\mathcal{L}}(S,h,K) \neq 0$.  

(More accurately, the LOSS invariant is an equivalence class $\widehat{\mathcal{L}}(K_h)$ of pairs $(\widehat{HFK}(-Y_{S,h},K_h),[{\bf x}_{S,h}])$ up to isomorphism.  However, as our interest is in determining tightness, here we only concern ourselves with whether or not the invariant is trivial.)

\medskip

Given an open book $(S,hg)$ whose monodromy is expressed as a composition,
Baldwin \cite{baldwin-comultiplicity} defines a comultiplication map on the contact invariant
\[ \widetilde{\mu} \colon \widehat{HF}(-Y_{S,hg}) \to \widehat{HF}(-Y_{S,h}) \otimes_{\Z_2} \widehat{HF}(-Y_{S,g})\]
that sends $c(S,hg)$ to $c(S,h) \otimes_{\Z_2} c(S,g)$.  
So if $c(S,h)\neq 0$ and $c(S,g)\neq 0$ then $c(S,hg)\neq 0$. 

Here we extend this to a kind of comultiplicity for the LOSS invariant.

\begin{theorem}\label{thm:LOSScomultiplication}
    Suppose $K$ is a non-separating oriented simple close curve in the surface $S$. If the knots $K_{hg}$ and $K_g$ are null-homologous in their manifolds $Y_{S,hg}$ and $Y_{S,g}$, then there is a comultiplication map
    \[ \widetilde{\mu} \colon \widehat{HFK}(-Y_{S,hg},K_{hg}) \to \widehat{HF}(-Y_{S,h}) \otimes_{\Z_2} \widehat{HFK}(-Y_{S,g},K_g)\]
    that sends $\widehat{\mathcal{L}}(S,hg,K)$ to $c(S,h) \otimes_{\Z_2} \widehat{\mathcal{L}}(S,g,K)$.

    In particular, if $c(S,h) \neq 0$ and $ \widehat{\mathcal{L}}(S,g,K) \neq 0$ then $\widehat{\mathcal{L}}(S,hg,K) \neq 0$.
\end{theorem}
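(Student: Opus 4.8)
The plan is to promote Baldwin's comultiplication $\widetilde\mu$ \cite{baldwin-comultiplicity} from the $\widehat{HF}$-setting to the knot Floer setting by carrying along the extra basepoint that records the curve $K$. Recall that Baldwin's $\widetilde\mu$ is induced by a chain-level triangle map built from a Heegaard diagram for $-Y_{S,hg}$ adapted to the factorization $hg$, assembled from a basis of properly embedded arcs for the page $S$; by construction the special element $\mathbf{x}_{S,hg}$ representing $c(S,hg)$ is carried to $\mathbf{x}_{S,h}\otimes_{\Z_2}\mathbf{x}_{S,g}$. First I would choose the basis of arcs so that it meets $K$ transversally in a single point, isotope $K$ onto the Heegaard surface $\Sigma$ (into the page $S_{1/2}$), and place a second basepoint $z$ on the side of $K$ opposite the original basepoint $w$. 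Because $K_{hg}$ and $K_g$ are null-homologous by hypothesis, the resulting doubly pointed diagrams are Heegaard diagrams for $K_{hg}\subset -Y_{S,hg}$ and $K_g\subset -Y_{S,g}$, with $z$ inducing the knot filtrations whose associated graded homologies are $\widehat{HFK}(-Y_{S,hg},K_{hg})$ and $\widehat{HFK}(-Y_{S,g},K_g)$; moreover $\mathbf{x}_{S,hg}$ and $\mathbf{x}_{S,g}$ now represent the LOSS classes $\widehat{\mathcal{L}}(S,hg,K)$ and $\widehat{\mathcal{L}}(S,g,K)$ \cite{LOSS}. On the $-Y_{S,h}$ factor we simply forget $z$ and retain the unfiltered complex $\widehat{CF}(-Y_{S,h})$; this is exactly why we never need $K_h$ to be null-homologous.

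The key step is to check that Baldwin's triangle map, counted using only the basepoint $w$, is a filtered chain map once the $z$-filtrations are taken into account: assigning the $\widehat{CF}(-Y_{S,h})$ factor the trivial filtration, the $K$-filtration level of the component landing in the $-Y_{S,g}$ factor should be bounded above by that of the input in the $-Y_{S,hg}$ factor. This is the standard basepoint bookkeeping for triangle maps and knot filtrations: each contributing holomorphic triangle $\psi$ has $n_w(\psi)=0$, and the difference of relative Alexander gradings between input and output generators is governed by $n_z(\psi)\ge 0$, so the map is non-increasing in the $K$-filtration. One also checks that $z$ can be positioned away from the stabilization/connected-sum region of Baldwin's diagram, so it interacts only with the $-Y_{S,g}$ factor of the target (and not at all with the trivial $\#^k(S^1\times S^2)$ piece); here one uses that $K$ meets exactly one arc, once, so can be kept disjoint from that region. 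Passing to associated graded homology then yields the asserted map $\widetilde\mu\colon \widehat{HFK}(-Y_{S,hg},K_{hg})\to \widehat{HF}(-Y_{S,h})\otimes_{\Z_2}\widehat{HFK}(-Y_{S,g},K_g)$. I expect this filteredness verification to be the main obstacle, as it requires re-examining the domains of the triangles in Baldwin's triple and confirming that the normalizations of the three knot filtrations are mutually consistent.

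Finally, to identify the image of $\widehat{\mathcal{L}}(S,hg,K)=[\mathbf{x}_{S,hg}]$ I would invoke Baldwin's computation: the distinguished small embedded triangle realizing $\mathbf{x}_{S,hg}\mapsto \mathbf{x}_{S,h}\otimes\mathbf{x}_{S,g}$ has support disjoint from $z$, so it has $n_z=0$ and preserves the $K$-filtration level, while any other contribution to the image has $n_z>0$ and hence strictly lower filtration level. Therefore on associated graded homology $\widehat{\mathcal{L}}(S,hg,K)\mapsto c(S,h)\otimes_{\Z_2}\widehat{\mathcal{L}}(S,g,K)$. For the last assertion, since $\Z_2$ is a field the tensor product of two nonzero classes is nonzero, so $c(S,h)\ne 0$ and $\widehat{\mathcal{L}}(S,g,K)\ne 0$ force $\widehat{\mathcal{L}}(S,hg,K)\ne 0$. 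Everything except the filteredness step is routine given Baldwin's construction and the definition of the LOSS invariant recalled above.
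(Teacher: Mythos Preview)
Your proposal is correct and follows essentially the same approach as the paper: extend Baldwin's chain-level comultiplication by adding a second basepoint recording $K$ (with the basis of arcs chosen so that $K$ meets exactly one arc once), observe that this basepoint induces a trivial filtration on the $-Y_{S,h}$ factor, and conclude that the induced map on associated graded homology sends the LOSS class to the claimed tensor product. The paper phrases the triviality on the $h$-factor geometrically---with respect to the $\mathbf{b},\mathbf{c}$ arcs the second basepoint lands in the same region as the first, so $K_h$ is a local unknot---which is exactly your ``forget the second basepoint on the $-Y_{S,h}$ factor''; conversely, you spell out the filteredness of the triangle map via $n_z(\psi)\ge 0$, a point the paper leaves implicit.
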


\begin{proof}
    Baldwin shows that his map $\widetilde{\mu}$ is induced from a map
    \[\mu  \colon \widehat{CF}(-Y_{S,hg}) \to \widehat{CF}(-Y_{S,h}) \otimes_{\Z_2} \widehat{CF}(-Y_{S,g})\]
    in which 
    \[\mu({\bf x}_{S,hg}) = {\bf x}_{S,h} \otimes_{\Z_2} {\bf x}_{S,g}.\]
    Along with the basepoint $z$ used to define these chain complexes and map, our inclusion of the oriented curve $K$ in $S$ and the corresponding knots adds another basepoint $w$ which induces a filtration.  

    From a basis of arcs ${\bf a}=\{a_1, \dots, a_n\}$ in $S$, a perturbation to a basis of arcs ${\bf b} = \{b_1, \dots, b_n\}$ where each endpoint of $\bdry b_i$ moves a bit in the direction of $\bdry S$ and so that $b_i$ intersects $a_i$ exactly once.      Similarly a basis of arcs ${\bf c} = \{c_1, \dots, c_n\}$ is obtained by perturbing ${\bf b}$.  Together $a_i, b_i, c_i$ appear as in Figure~\ref{fig:arcsabc} in a strip neighborhood of $a_i$. The basepoint $z$ is placed outside these neighborhoods in $S$ as also shown. 
    
    \begin{figure}
    \centering
    \includegraphics[width=.15\textwidth]{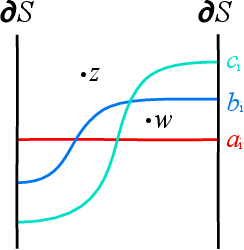}
    \caption{The placement of the triple of arcs $ai,b_i,c_i$ and the basepoints $z$ and $w$.}
    \label{fig:arcsabc}
\end{figure}

    For the LOSS invariant, the arcs ${\bf a}$ are also chosen so that $K$ is disjoint from all $a_i$ except $a_1$ which it transversally intersects exactly once. Then, along with $z$ as above, the basepoint $w$ is chosen disjoint from ${\bf a} \cup {\bf b} \cup {\bf c}$ in the strip neighborhood of $a_1$ so that it is 
    \begin{itemize}
        \item between $a_1$ and $b_1$ to define $K_g \subset Y_{S,g}$ in the page of $(S, g)$ with respect to ${\bf a}$ and ${\bf b}$, and also
        \item between $a_1$ and $c_1$ to define $K_{hg} \subset Y_{S,hg}$ in the page of $(S,hg)$ with respect to ${\bf a}$ and ${\bf c}$.   
    \end{itemize} 
    This however forces $w$ to be in the same region as $z$ making in a trivial curve in the page of $(S,h)$ with respect to ${\bf b}$ and ${\bf c}$ so that the induced knot $K_h \subset Y_h$ is a local, trivial unknot.  

    For each $f \in \{h,g,hg\}$, these basepoints filter the complex $\widehat{CF}(-Y_{S,f})$ to produce   the filtered complex $\widehat{CFK}(-Y_{S,f}, K_{f})$ whereupon the homology class of ${\bf x}_f$ in $\widehat{HFK}(-Y_{S,f}, K_{f})$ becomes the LOSS invariant of $K_f$.  Of course, in the case of $K_h$, the basepoint $w$ provides no extra filtering, so we have that $\widehat{CFK}(-Y_{S,h}, K_{h}) \cong \widehat{CF}(-Y_{S,h})$ and the LOSS invariant of $K_h$ is equivalent to $c(S,h)$.

    The arguments of Baldwin extend to incorporate the filtration induced by the extra basepoint, allowing $\mu$ to become a map
    \[\mu  \colon \widehat{CFK}(-Y_{S,hg},K_{hg}) \to \widehat{CF}(-Y_{S,h},) \otimes_{\Z_2} \widehat{CFK}(-Y_{S,g},K_g)\]
    that still takes ${\bf x}_{S,hg}$ to ${\bf x}_{S,h} \otimes_{\Z_2} {\bf x}_{S,g}$.
    Hence $\widetilde{\mu}$ becomes a map 
    \[\widetilde{\mu}  \colon \widehat{HFK}(-Y_{S,hg},K_{hg}) \to \widehat{HF}(-Y_{S,h}) \otimes_{\Z_2} \widehat{HFK}(-Y_{S,g},K_g)\]
    that takes $\widehat{\mathcal{L}}(S,hg,K)$ to $c(S,h) \otimes_{\Z_2} \widehat{\mathcal{L}}(S,g,K)$.
\end{proof}

\begin{prop}\label{prop:ctctinvtreattachment}

Let $K$ be an oriented simple closed curve in the surface $S$ that bounds a subsurface $S_K$ with positive genus.  Suppose that $g$ and $h$ are diffeomorphisms of $S$ which restrict to the identity on $\bdry S$, that $c(S,h) \neq 0$, and that $g$ is supported in $S_K$.

Let $(S', hg\eta)$ be the boundary connected sum of $(S,hg)$ with a positive Hopf band $(H, \eta)$.  Let $K'$ be the result of a single handle-slide of $K$ over the core of the Hopf band so that $K'$ is non-separating in $S'$.
Then $\widehat{\mathcal{L}}(S',hg\eta, K') \neq 0$.  

Furthermore, we may view $S_K$ as the ribbon $R(\Lambda)$ of a Legendrian graph $\Lambda$ so that $K = \bdry R(\Lambda)$ is a transverse knot.  Consequently, the complement of $K$ is tight.
\end{prop}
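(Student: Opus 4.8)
The plan is to apply Theorem~\ref{thm:LOSScomultiplication}. Set $(Y,\xi)=(Y_{S,hg},\xi_{S,hg})$. First, using Lemma~\ref{lem:legspinesofribbons} I would Legendrian realize a spine of $S_K$ inside a page of an open book for $(Y,\xi)$ so that $S_K$ becomes the ribbon $R(\Lambda)$ of a connected Legendrian graph $\Lambda$ with $K=\bdry R(\Lambda)$; after relabeling I keep $(S,hg)$, $K=\bdry S_K$, and $g$ still supported in $S_K$. Next, perform the stabilization $(S',hg\eta)$ by plumbing the positive Hopf band $(H,\eta)$ onto $(S,hg)$ in a region disjoint from $S_K$ and so that the core $c$ of $H$ is non-separating in $S'$ (this is what makes $K'$, whose class is $[K]+[c]=[c]\neq 0$ in $H_1(S')$, non-separating). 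Since this is a positive stabilization, $(S',hg\eta)$ supports the same $(Y,\xi)$, and $c$ bounds the co-core disk of the stabilizing handle, which is disjoint from $K$; hence the handleslide producing $K'$ is realized by an ambient contact isotopy of $(Y,\xi)$, so $K'$ is isotopic to $K=\bdry R(\Lambda)$. Therefore, granting $\widehat{\mathcal{L}}(S',hg\eta,K')\neq 0$, the complement of $K'$ is tight by the properties recalled in Section~\ref{sec:LOSS}, hence so is the contactomorphic complement of $K$; this yields the last sentence.

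To obtain $\widehat{\mathcal{L}}(S',hg\eta,K')\neq 0$, I would first reduce to the case $h=\mathrm{id}$. Factor the monodromy on $S'$ as $hg\eta=h\cdot(g\eta)$, extending $h$ and $g$ by the identity across $H$. Since $K'$ is isotopic to $K=\bdry S_K$, which bounds $S_K$, the knots $K'_{hg\eta}$ and $K'_{g\eta}$ are null-homologous in their respective manifolds, so the hypotheses of Theorem~\ref{thm:LOSScomultiplication} are met for the factors $h$ and $g\eta$. The open book $(S',h)$ (with $h$ the identity on $H$) is the plumbing of $(S,h)$ with $(H,\mathrm{id})$ and therefore supports the contact connected sum $(Y_{S,h},\xi_{S,h})\,\#\,(S^1\times S^2,\xi_{std})$; by the K\"unneth formula for $\widehat{HF}$ over $\Z_2$ and multiplicativity of the contact invariant under connected sum, $c(S',h)=c(S,h)\otimes_{\Z_2}c(S^1\times S^2,\xi_{std})\neq 0$, using $c(S,h)\neq 0$ and fillability of $\xi_{std}$ on $S^1\times S^2$. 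Theorem~\ref{thm:LOSScomultiplication} then gives $\widehat{\mathcal{L}}(S',hg\eta,K')\neq 0$ provided $\widehat{\mathcal{L}}(S',g\eta,K')\neq 0$, and this last inequality is exactly the statement of the Proposition with $h$ replaced by $\mathrm{id}$.

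It remains to treat $h=\mathrm{id}$, i.e.\ to show $\widehat{\mathcal{L}}(S',g\eta,K')\neq 0$. Here $(S',g\eta)$ is a positive stabilization of $(S,g)$, so $\widehat{\mathcal{L}}(S',g\eta,K')$ is the LOSS invariant of the Legendrian realization $K_g$ of the page curve $K=\bdry S_K$ in $Y_{S,g}$, the stabilization and handleslide only serving to present $K_g$ by a non-separating page curve. If $c(S,g)\neq 0$ this is immediate, since then $c(S',g\eta)=c(S,g)\neq 0$ forces $\widehat{\mathcal{L}}(S',g\eta,K')\neq 0$ as in Section~\ref{sec:LOSS}. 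In general, I would argue $\widehat{\mathcal{L}}(K_g)\neq 0$ by mirroring the base case of the proof of Theorem~\ref{thm:tightfactorization}: because $g$ is supported in $S_K$, the exterior of $K_g$ is assembled from the complement of the binding of the sub-open-book $(S_K,g|_{S_K})$ — which is strongly non-loose (tight, no Giroux torsion), cf.\ the use of \cite[Lemma~3.1]{EtnyreVelaVick} in that proof — together with the product region over $S_K^c$, and since $\mathrm{sl}(\bdry R(\Lambda),[R(\Lambda)])=-\chi(R(\Lambda))$ the transverse knot $K_g$ realizes the Bennequin bound; one then wants the transverse invariant $\widehat{\mathcal{L}}(K_g)$ to be pulled back, non-trivially, from that of the binding of $(S_K,g|_{S_K})$ through the (trivial) Murasugi and connected sums building $Y_{S,g}$.

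The main obstacle is precisely this last step: promoting the tightness of the complement of a binding to the non-vanishing of its LOSS/transverse invariant, and tracking that non-vanishing through the sums that build $Y_{S,g}$ from $(S_K,g|_{S_K})$ — this is the Floer-theoretic counterpart of the geometric bookkeeping carried out in the proof of Theorem~\ref{thm:tightfactorization}. Everything else — the reduction to $h=\mathrm{id}$, the connected-sum computation of $c(S',h)$, and the stabilization bookkeeping (non-separating-ness of $c$ and $K'$, and the isotopy $K'\simeq K$) — is routine given Theorem~\ref{thm:LOSScomultiplication}.
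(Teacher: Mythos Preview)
Your overall strategy matches the paper's: factor $hg\eta = h\cdot(g\eta)$ on $S'$, apply Theorem~\ref{thm:LOSScomultiplication} to reduce to $\widehat{\mathcal{L}}(S',g\eta,K')\neq 0$, and then argue this by relating $K'$ to the binding of the sub-open-book $(S_K,g\vert_{S_K})$. Your treatment of $c(S',h)$ via the connected sum with $(S^1\times S^2,\xi_{std})$ is in fact more explicit than the paper's.

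The gap is in your ``main obstacle.'' You frame the missing step as \emph{promoting tightness of the binding complement to non-vanishing of its transverse invariant}, but this is not how the paper proceeds, and trying to go that direction would indeed be hard. The key input is a known theorem: the transverse invariant of a connected binding is always non-zero (Vela-Vick). The paper simply cites this fact for the binding $K$ of $(S_K,g)$.

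What remains is the bookkeeping you allude to but do not carry out. The paper chooses an arc $\delta$ in $S'$ disjoint from $P\cup S_K$ (where $P$ is the pair of pants containing the Hopf core and bounded by $K$, $K'$, and a component of $\bdry S'$) that separates off a subsurface $S_{NPK}=N\cup P\cup S_K$ with $N$ a neighborhood of $K'\cup a_1$. This exhibits $(S',g\eta)$ as a boundary connected sum $(S_0,\mathrm{id})\natural(S_{NPK},g\eta)$, and the connected-sum formula reduces to $\widehat{\mathcal{L}}(S_{NPK},g\eta,K')\neq 0$. In $(S_{NPK},g\eta)$ the curve $K'$ sits as a Legendrian approximation of the transverse binding $K$ of $(S_K,g)$, so its LOSS invariant agrees with that binding's transverse invariant, which is non-zero. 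Once you replace your attempted tightness-to-invariant implication with this direct citation and supply the arc decomposition, your argument becomes the paper's.
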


\begin{proof}
    By Theorem~\ref{thm:LOSScomultiplication}, $\widehat{\mathcal{L}}(S',hg\eta, K') \neq 0$ will follow if $\widehat{\mathcal{L}}(S',g\eta, K') \neq 0$ since $c(S,h)\neq 0$ by assumption.

    The curve $K'$ in $S'$  bounds a subsurface $P \cup S_K$ where $P$ is a pair of pants whose boundary is the two curves $K$ and $K'$ and a component of $\bdry S'$ so that $P$ contains the support of $\eta$.  Recall that $a_1$ is a properly embedded arc in $S'$  intersecting $K'$ exactly once.  Let $N$ be a closed small regular neighborhood of $K' \cup a_1$ (that also contains $b_1$ and $c_1$).  Then in $\bdry N$ is an properly embedded separating arc $\delta$ in $S'$ that is disjoint from $P \cup S_K$.
    Cutting $S'$ along $\delta$ into a surface $S_0$ and the surface $S_{NPK} = N \cup P \cup S_K$ (where $N \cut (P \cup S_K)$ is just an annulus).  Conversely, $S'$ is a boundary connected sum of $S_0$ and $S_{NPK}$.  Hence, as the support of $g\eta$ is contained in $P \cup S_K$, we may view the open book $(S', g\eta)$ containing the curves $K$ and $K'$ as the boundary connected sum of the open book $(S_0, id)$ and the open book $(S_{NPK}, g\eta)$ containing the curves $K$ and $K'$.
    In particular, since the contact connected sum induces an isomorphism 
    \[ \widehat{CF}(-Y_{S', g\eta}) \cong \widehat{CF}(-Y_{S_0,id}) \otimes_{\Z_2} \widehat{CF}(-Y_{S_{NPK},g\eta})\]
    in which ${\bf x}_{S', g\eta}$ is identified with ${\bf x}_{S_0, id} \otimes_{\Z_2} {\bf x}_{S_{NPK},g\eta}$.
    Thus this passes to a map involving the filtered complexes
    \[ \widehat{CFK}(-Y_{S', g\eta},K'_{g\eta}) \to \widehat{CF}(-Y_{S_0,id}) \otimes_{\Z_2} \widehat{CFK}(-Y_{S_{NPK},g\eta},K'_{g\eta})\]
    so that, in homology, $\widehat{\mathcal{L}}(S',g\eta,K')$ is sent to $c(S_0,id) \otimes_{\Z_2} \widehat{\mathcal{L}}(S_{NPK},g\eta,K')$.

    Since $c(S_0,id) \neq 0$, we have that $\widehat{\mathcal{L}}(S',g\eta,K') \neq 0$ if $\widehat{\mathcal{L}}(S_{NPK},g\eta,K')\neq 0$.   To show this, observe that $K$ is the connected binding of the open book $(S_K,g)$, and the open book $(S_{NPK},g\eta)$ contains $K'$ as a Legendrain approximation of this transverse knot $K$.  As the binding of $(S_K,g)$, the transverse LOSS invariant of $K$ is non-zero.  Hence the LOSS invariant of its Legendrian approximation $K'$ is also non-zero. That is, $\widehat{\mathcal{L}}(S_{NPK},g\eta,K')\neq0$ as desired.

    Now having that $\widehat{\mathcal{L}}(S',hg\eta, K') \neq 0$, $K'_{hg\eta}$ is a Legendrian knot in $Y_{S',hg\eta} \cong Y_{S,hg}$ with tight complement.   By construction, it is a Legendrian approximation of the transverse knot $K=\bdry R(\Lambda)$.  Thus $K$ has tight complement as well.
\end{proof}

\section{Examples}
Here we demonstrate basic applications of Theorem~\ref{thm:main} and Theorem~\ref{thm:murasugisums}.  For this, let us establish notation for the contact structures on $S^3$.  Of course we have the unique tight contact structure which we denote $\xi_{std}$.
Up to isotopy, there is an integer family of overtwisted contact structures $\xi_d$ on $S^3$, which are distinguished by their $d_3$-invariant $d_3(\xi_d) = d \in \mathbb{Z} + \frac{1}{2}$; see \cite{DGS} for a general definition.  For one's bearings, the open book of the negative Hopf band supports the contact structure $\xi_{1/2}$ while the open book of the positive Hopf band supports $\xi_{std}$ which has $d_3(\xi_{std}) = -1/2$.

\subsection{Legendrian unknots with the TRP}

\begin{lemma}\label{lem:unknotTRP}
    Every non-loose Legendrian unknot has the TRP.
\end{lemma}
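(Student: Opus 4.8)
The plan is to reduce the statement to the classification of non-loose Legendrian unknots and then exhibit, for each one, an explicit reattachment yielding a tight manifold. Recall that a non-loose Legendrian unknot $\Lambda$ lives in some contact structure $\xi_d$ on $S^3$; by the Eliashberg--Fraser classification (cited in the excerpt as Theorem~\ref{thm:unknotclassification}), such unknots exist precisely for certain $d_3$-values and are classified by their Thurston--Bennequin and rotation numbers, with the underlying overtwisted contact structure determined by these. Since $\Lambda$ is a knot, the TRP is equivalent (as noted right before Remark~\ref{rem:massotex}) to the existence of a finite sequence of $(\pm 1)$-Legendrian surgeries on $\Lambda$ that produces a tight contact $3$-manifold. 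So the whole task becomes: \emph{for each non-loose Legendrian unknot, find a sequence of Legendrian surgeries yielding a tight manifold.}

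First I would set up the surgery picture. A Legendrian unknot has a standard front, and a non-loose one is obtained by stabilizing the maximal-$\tb$ unknot in $\xi_{std}$ in both directions relative to an overtwisted framing — concretely it sits on a convex sphere in $\xi_d$ with a prescribed dividing set, and $\Lambda$ bounds a disk whose framing differs from the Legendrian framing. The key computation is to track the surgered manifold: a single $(-1)$-surgery (Legendrian surgery) on a Legendrian unknot with Thurston--Bennequin invariant $\tb$ produces a lens space (or $S^3$, or $S^1\times S^2$) with contact framing $\tb-1$; iterating $(\pm1)$-surgeries along further Legendrian unknots, one can arrange the resulting smooth manifold to be $S^3$ again, or a lens space known to carry a unique tight structure. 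Then I would verify tightness of the output either by recognizing it as Stein-fillable (when all surgeries can be taken to be $(-1)$, i.e.\ Legendrian surgeries on a Legendrian link in $(S^3,\xi_{std})$ after the first surgery has ``corrected'' the contact structure) or by appealing to the classification of tight contact structures on lens spaces. The cleanest route is probably: perform one $(-1)$-surgery to land in a Stein-fillable situation, or note that a single suitable surgery already produces $(S^3,\xi_{std})$, whose tightness is classical.

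The main obstacle I anticipate is handling the full range of non-loose Legendrian unknots uniformly — there is a two-parameter family (indexed by $\tb$ and $\rot$, subject to the non-looseness constraint from \cite{EliFr}), and different members live in different $\xi_d$. I expect that the argument naturally splits: after one Legendrian surgery the contact structure's $d_3$-invariant changes in a controlled way, and one wants to show that for the \emph{right} choice of sign and of an auxiliary stabilized Legendrian unknot to surger along, the $d_3$-invariant and the smooth topology simultaneously hit a tight target (most likely $(S^3,\xi_{std})$, possibly after several steps). Verifying this bookkeeping — that the surgery diagram can always be completed to something recognizably tight — is where the real work lies; the contact-geometric input (TRP $\Leftrightarrow$ surgery to tight, for knots) and Theorem~\ref{thm:main} then do the rest. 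An alternative, perhaps slicker, approach would be to work directly on the convex sphere: realize $\bdry R(\Lambda)$ and show the reattachment glues the two sides of a convex sphere in $\xi_d$ back into a convex sphere bounding a tight ball, using Eliashberg's uniqueness of tight structures on $B^3$ with fixed boundary characteristic foliation. I would try the direct convex-surface argument first, falling back on the surgery description if the gluing map is awkward to write down explicitly.
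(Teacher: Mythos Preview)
Your overall strategy---reduce to the classification of non-loose Legendrian unknots and then exhibit a Legendrian surgery on $\Lambda$ yielding a tight manifold---is exactly the paper's approach. However, you carry a factual error about the classification that would derail the execution, and you are missing the key simplification that makes the argument short.

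First, the classification: non-loose Legendrian unknots in an overtwisted $S^3$ all live in the \emph{single} contact structure $\xi_{1/2}$, with $(\tb,\rot)\in\{(n,\pm(n-1)):n\in\mathbb{N}\}$. There is no two-parameter family spread across various $\xi_d$, so the $d_3$-bookkeeping you anticipate as ``the main obstacle'' does not arise. You also omit the case of a non-loose unknot in a tight manifold, where the TRP is immediate since the trivial reattachment is already tight; this case needs to be stated, as ``non-loose'' in this paper includes subsets of tight manifolds.

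Second, and more to the point, the paper bypasses any surgery calculus by exploiting the explicit Legendrian surgery \emph{presentation} of these non-loose unknots furnished by the classification: each is drawn as a Legendrian unknot $L$ in $(S^3,\xi_{std})$ accompanied by a $(+1)$-surgery on a parallel push-off of $L$ (and, for $\tb\geq 2$, additional $(-1)$-surgery curves). A single $(-1)$-Legendrian surgery on $L$ then cancels that $(+1)$-surgery on the push-off. What remains is either a lone $(+1)$-surgery on a $\tb=-1$ unknot---giving the tight $(S^1\times S^2,\xi_{std})$---or a diagram consisting only of $(-1)$-surgeries, hence Stein fillable and tight. This cancellation is the entire content of the proof; your proposal circles it (``perform one $(-1)$-surgery to land in a Stein-fillable situation'') without identifying why one surgery suffices or what it cancels. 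The alternative convex-sphere approach you sketch is not what the paper does and would require separate work to make precise.
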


\begin{proof}
    If a Legendrian unknot is non-loose,  it is an unknot either in a tight contact manifold or in an overtwisted $S^3$.  Legendrian unknots in tight manifolds trivially have the TRP.
    The classification of Legendrian unknots in overtwisted $S^3$'s given in Theorem~\ref{thm:unknotclassification} below lead to Legendrian surgery descriptions of these knots shown in Figure~\ref{fig:legunknots} demonstrating the TRP.  In particular, a $(-1)$ Legendrian surgery on the Legendrian unknot will cancel the $(+1)$ surgery on the (purple) parallel push-off.  For Figure~\ref{fig:legunknots}(a), this leaves a single $tb=-1$ unknot with a $(+1)$ surgery that yields the tight contact structure on $S^1 \times S^2$. For the Legendrian unknots of Figure~\ref{fig:legunknots}(b) and (c), this leaves a surgery diagram with no $(+1)$ Legendrian surgeries; hence the resulting manifold is tight.
\end{proof}

The classification of Legendrian unknots up to Legendrian isotopy in $(S^3, \xi_{std})$ is due to Eliashberg and Fraser \cite{EliFr}. Non-loose Legendrian unknots in overtwisted  $S^3$’s were classified up to coarse equivalence by Eliashberg and Fraser \cite{EliFr}; see \cite{Etnyre-knotsinOTctctstr} and \cite{GeiOn} for alternative proofs. We say that two Legendrian knots $L_1$ and $L_2$ are {\em coarsely equivalent} if there is a contactomorphism of the ambient manifold carrying  $L_1$ to $L_2$.

\begin{theorem}[Classification of Legendrian unknots, \cite{EliFr}]
\label{thm:unknotclassification} 
\phantom{xxxxxxxx}\\
\begin{itemize}
  \item[(i)] Let $L$ be an oriented Legendrian unknot in $(S^3, \xi_{std})$. Then the classical invariants $(tb, rot)$ determine $L$ up to Legendrian isotopy. All Legendrian unknots are stabilizations of the unique Legendrian unknot with $\tb= -1$ and $\rot=0$.  For each negative integer $n \leq -1$, we have $|n|$ distinct Legendrian unknots with $\tb=n$. The stabilization of $L$ is obtained by replacing the box in  Figure~\ref{fig:legunknots} with a sequence of $n-1$ stabilizations, each of type $s$ or $z$ shown on the bottom of  Figure~\ref{fig:legunknots}. If the clockwise orientation is chosen for $L$, the type $s$ stabilization gives a negative stabilization, and the type $z$ stabilization gives a positive stabilization of $L$. (The type $s$ and $z$ stabilizations commute, so the order of the sequence of stabilizations does not matter.)
\begin{figure}[h!]
    \centering
    \includegraphics{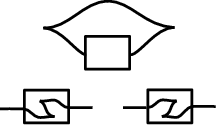}
    \caption{If the clockwise orientation is chosen for $L$, the type $s$ stabilization (left) gives a negative stabilization, and the type $z$ stabilization (right) gives a positive stabilization of $L$.
    }
    \label{fig:legunknots}
\end{figure}  
    \item[(ii)] 
    Let $L$ be an oriented non-loose Legendrian unknot in an overtwisted contact structure $\xi$ on $S^3$. Then $\xi$ is the contact structure $\xi_{1/2}$ and the invariants $(\tb(L), \rot(L)) \in \{(n,\pm(n - 1)) : n \in \mathbb{N}\}$ determine $L$ up to coarse equivalence. 
    These are illustrated in Figure~\ref{fig:nonlooseunknots-surgerydescription}. 
    
 \begin{figure}[h!]
    \centering
    \includegraphics{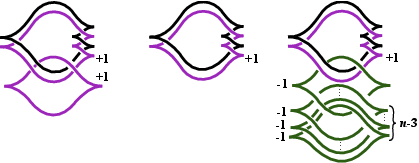}
   \caption{Legendrian surgery descriptions of non-loose Legendrian unknots.  The left knot $L$ has $(\tb, \rot) = (1, 0)$.  Depending on a choice of orientation of $L$, the middle knot $L$ has $(\tb, \rot) = (2, \pm1)$ and the right one has $(\tb, \rot) = (n, \pm(n - 1)), n \geq 3$.}
    \label{fig:nonlooseunknots-surgerydescription}
 \end{figure}
\end{itemize}
\end{theorem}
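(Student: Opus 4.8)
The plan is to derive both classifications from convex surface theory, but via different surfaces: for part (i) I would work with a convex Seifert disk for the unknot, while for part (ii) — where the Bennequin bound fails and no Seifert disk can be made convex with $L$ lying on it — I would instead analyze the complementary solid torus.

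For part (i), let $L\subset(S^3,\xi_{std})$ be a Legendrian unknot bounding an embedded disk $D$, which I would perturb by a $C^0$-small isotopy rel boundary to be convex with dividing set $\Gamma_D$. Tightness of $\xi_{std}$ and Giroux's criterion rule out closed components of $\Gamma_D$ (a closed dividing curve on a disk produces an overtwisted disk), so $\Gamma_D$ is a union of $k\ge 1$ boundary-parallel arcs; then $tb(L)=-k$ is half the number of points of $L\cap\Gamma_D$, and $rot(L)=\chi(D_+)-\chi(D_-)$ is fixed by the cyclic sequence of signs of the $2k$ arc endpoints along $L$. This gives $tb\le -1$ and, for each $n\le -1$, exactly $|n|$ possible rotation numbers. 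Using the Legendrian Realization Principle and edge-rounding, I would then show that every admissible $\Gamma_D$ is obtained from the single-arc picture ($tb=-1$, $rot=0$) by successive $\pm$-stabilizations, one extra boundary-parallel arc per stabilization with its sign recording type $s$ versus type $z$; this yields the stabilization statement and the count. Finally, to promote ``equal $(tb,rot)$'' to genuine Legendrian isotopy, I would note that $S^3\setminus N(L)$ is a $3$-ball, invoke Eliashberg's uniqueness of the tight contact structure on $B^3$ together with uniqueness of the contact germ on the standard neighborhood $N(L)$ prescribed by its convex boundary, and then apply isotopy extension (with triviality of $\pi_0$ of the contactomorphism group of $B^3$ rel boundary) to get an ambient contact isotopy carrying $L$ to the model.

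For part (ii), let $L$ be a non-loose Legendrian unknot in an overtwisted $(S^3,\xi)$, so the complement $C=S^3\setminus N(L)$ is a tight solid torus with convex boundary. I would fix coordinates on $T^2=\partial N(L)=\partial C$ using the meridian of $N(L)$ and the Seifert longitude of $L$ (which is the meridian of $C$), so that the dividing slope on $T^2$ records $tb(L)$. Next I would invoke Honda's classification of tight contact structures on $S^1\times D^2$ to determine for which boundary data $C$ admits a tight contact structure and to enumerate those; for each, capping off by the unique standard tight $N(L)$ reconstructs a contact structure on $S^3$ and a value of $rot(L)$ from the relative Euler class. The resulting bookkeeping should show that the realizable pairs are exactly $(tb(L),rot(L))=(n,\pm(n-1))$ for $n\in\mathbb{N}$, that the complementary tight structure is unique for each, and — via a $d_3$ computation from the surgery descriptions of Figure~\ref{fig:nonlooseunknots-surgerydescription} — that the glued-up ambient structure is $\xi_{1/2}$ in every case. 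Honda's uniqueness for fixed boundary data, combined with the ball-gluing from part (i), then produces a contactomorphism of $(S^3,\xi_{1/2})$ carrying any such $L$ to the model, giving coarse equivalence; this is the best one can expect, since the sign of $rot$ is only well defined up to a contactomorphism.

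I expect the main obstacle to be part (ii): correctly identifying which tight contact structures on the complementary solid torus occur, reading off both the arithmetic constraint $rot=\pm(tb-1)$ and the identification $\xi=\xi_{1/2}$, and establishing uniqueness up to coarse equivalence. This step hinges on Honda's classification of tight contact structures on the solid torus and on careful tracking of bypass attachments along $\partial N(L)$ and of the relative Euler class, whereas the convex-disk analysis in part (i) is comparatively routine.
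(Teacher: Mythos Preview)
The paper does not supply its own proof of this theorem: it is quoted with attribution to Eliashberg--Fraser, and the paragraph preceding it points to Etnyre and to Geiges--Onaran for alternative arguments in the overtwisted case. So there is no in-paper proof to compare your proposal against.

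That said, your outline tracks the standard literature proofs closely: the convex Seifert disk argument for (i) is essentially Eliashberg--Fraser's original method, and analyzing the complementary tight solid torus via Honda's classification for (ii) is the Geiges--Onaran approach. One slip worth flagging: in part (i) you write that ``$S^3\setminus N(L)$ is a $3$-ball,'' but the complement of a tubular neighborhood of an unknot is a solid torus, not a ball. The correct move is to take a neighborhood of the convex Seifert disk $D$ (equivalently $N(L\cup D)$); its complement in $S^3$ \emph{is} a $3$-ball, and then Eliashberg's uniqueness theorem for the tight $B^3$ applies as you intend. With that correction your plan for (i) is sound, and your plan for (ii) is the right strategy, with exactly the bookkeeping caveats you already identify.
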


\begin{cor}[Classification of unoriented Legendrian unknots]
\label{cor:unknotclassificationunoriented} 
\phantom{xxxxxxxx}\\
\begin{itemize}
  \item[(i)] Let $L$ be an unoriented Legendrian unknot in $(S^3, \xi_{std})$. All Legendrian unknots are stabilizations of the unique Legendrian unknot with $\tb= -1$. Up to coarse equivalence, for each positive integer $k \geq 1$, we have $k$ distinct Legendrian unknots with even $\tb=- 2k$, and we have $k$ distinct Legendrian unknots with odd $\tb= 1 - 2k$.
 
    \item[(ii)] Let $L$ be an unoriented non-loose Legendrian unknot in $(S^3, \xi_{1/2})$. Up to coarse equivalence, for each positive integer $k \geq 1$, we have a unique non-loose Legendrian unknot with $\tb = k$.
\end{itemize}
\end{cor}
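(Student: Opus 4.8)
The plan is to deduce the unoriented classification directly from the oriented classification in Theorem~\ref{thm:unknotclassification} by bookkeeping the effect of forgetting orientations, which sends the pair $(\tb,\rot)$ to $(\tb,-\rot)$, together with two standard facts valid in any contact $3$--manifold: $\tb$ is a coarse-equivalence invariant, and $\rot$ is a coarse-equivalence invariant up to sign, so that $|\rot|$ is a well-defined invariant of an \emph{unoriented} Legendrian knot under contactomorphism.

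For part (i), I would first recall from Theorem~\ref{thm:unknotclassification}(i) that an oriented Legendrian unknot in $(S^3,\xi_{std})$ is obtained from the $\tb=-1$, $\rot=0$ unknot by $m\ge 0$ stabilizations, has $\tb=-1-m$, realizes exactly the values of $\rot$ with $|\rot|\le m$ and $\rot\equiv m\pmod 2$, and is determined up to Legendrian isotopy by $(\tb,\rot)$. Forgetting the orientation of the Legendrian unknot with invariants $(n,r)$ yields the same unoriented Legendrian knot as the one with invariants $(n,-r)$: this is immediate, since reversing the orientation negates $\rot$ and, by Theorem~\ref{thm:unknotclassification}(i), every oriented Legendrian unknot is Legendrian isotopic to its standard representative. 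Hence for fixed $\tb$ the unoriented Legendrian unknots are indexed by the attainable values of $|\rot|$, and two of them are coarsely equivalent precisely when $|\rot|$ agrees (the ``only if'' by the invariance facts above; the ``if'' because equal $(\tb,|\rot|)$ forces Legendrian isotopy once one chooses compatible orientations). It then remains to count: when $\tb=-2k$ we have $m=2k-1$ odd, so $|\rot|$ runs over $\{1,3,\dots,2k-1\}$, giving $k$ classes; when $\tb=1-2k$ we have $m=2k-2$ even, so $|\rot|$ runs over $\{0,2,\dots,2k-2\}$, again $k$ classes. The assertion that all are stabilizations of the unique $\tb=-1$ unknot is the unoriented shadow of the corresponding statement in Theorem~\ref{thm:unknotclassification}(i).

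For part (ii), Theorem~\ref{thm:unknotclassification}(ii) says every non-loose oriented Legendrian unknot lies in $(S^3,\xi_{1/2})$ and is coarsely equivalent to one with $(\tb,\rot)=(n,\pm(n-1))$ for some $n\in\N$; forgetting the orientation therefore gives a non-loose unoriented Legendrian unknot with $\tb=n$, and conversely every such unoriented knot arises this way after choosing an orientation. For fixed $n$, the two oriented knots with invariants $(n,n-1)$ and $(n,-(n-1))$ become coarsely equivalent as unoriented knots: one is the orientation reversal of the other, and by Theorem~\ref{thm:unknotclassification}(ii) each is coarsely equivalent to its standard representative, so the two underlying unoriented knots are related by a contactomorphism. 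Thus there is at most one unoriented class for each $\tb=k\ge 1$ and none otherwise, while distinctness for different $k$ is immediate from invariance of $\tb$; this produces exactly one non-loose unoriented Legendrian unknot with $\tb=k$ for each $k\ge 1$.

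The only point needing genuine care — and the one I would be most deliberate about in a full write-up — is keeping the coarse-equivalence bookkeeping honest: verifying that $|\rot|$ really is preserved under contactomorphism for unoriented Legendrian unknots (so that the counts are lower bounds, not merely upper bounds), and, in the overtwisted case, that one is entitled to merge the two oriented representatives $(n,\pm(n-1))$ into a single unoriented class using the coarse equivalences supplied by Theorem~\ref{thm:unknotclassification}(ii) rather than only Legendrian isotopies. Everything else is a direct translation of the oriented classification.
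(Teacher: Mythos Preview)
Your proposal is correct and is precisely the natural way to derive the corollary from Theorem~\ref{thm:unknotclassification}. The paper itself gives no proof of this corollary, treating it as an immediate consequence of the oriented classification; your write-up simply makes explicit the bookkeeping (orientation reversal negates $\rot$, so unoriented classes are indexed by $(\tb,|\rot|)$, and then one counts) that the paper leaves to the reader.
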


\subsection{Generalized Hopf Links}

For $n\in\Z$, let $H_n$ be the annulus whose boundary is the anti-parallel $(2, -2n)$-torus link as shown in Figure~\ref{fig:HopfandHopfPlumbing}(Left).  So $H_1$ is the positive Hopf band, $H_{-1}$ is the negative Hopf band, and $H_0$ is the planar annulus. We say $H_n$ is a {\em generalized Hopf band} and its boundary $\bdry H_n$ is a {\em generalized Hopf link}.

\begin{figure}
    \centering
    \includegraphics[width=.3\textwidth]{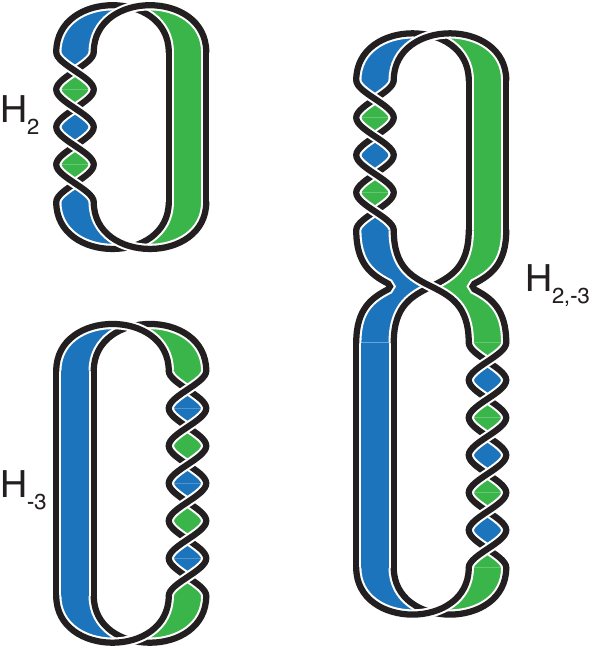}
    \caption{(Left) The generalized Hopf bands $H_2$ and $H_{-3}$.  (Right) The plumbing $H_{2,-3}$ of $H_2$ with $H_{-3}$.}
    \label{fig:HopfandHopfPlumbing}
\end{figure}

\begin{theorem}\label{thm:genhopf}
For $n\neq 0$,  let $\Lambda_n$ be a non-loose Legendrian unknot with $tb=-n$ in $(S^3,\xi)$ 
where $\xi = \xi_{std}$ if $n>0$ and $\xi=\xi_{1/2}$ if $n<0$.
Then $R(\Lambda_n)$ is topologically the generalized Hopf band $H_n$, and the transverse generalized Hopf link $\bdry R(\Lambda_n)$ is non-loose.  

Conversely, suppose for $n\neq 0$ that $\Lambda$ is a Legendrian graph in $(S^3, \xi)$ such that $R(\Lambda)$ is topologically the generalized Hopf band $H_n$ and the transverse generalized Hopf link $\bdry R(\Lambda)$ is non-loose.
Then $\Lambda$ deformation retracts to a non-loose Legendrian unknot with $tb=-n$ and $\xi = \xi_{std}$ if $n>0$ and $\xi=\xi_{1/2}$ if $n<0$.
\end{theorem}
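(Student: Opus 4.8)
The plan is to handle the two implications separately. The forward one follows quickly from Theorem~\ref{thm:main}; the converse requires extracting the core of the ribbon annulus and applying the classification of non-loose Legendrian unknots.

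\emph{Forward direction.} I would first note that the ribbon $R(\Lambda_n)$ of a Legendrian knot is an annular neighborhood of $\Lambda_n$ inside a surface tangent to $\xi$ along $\Lambda_n$, so the framing it induces on $\Lambda_n$ is the Legendrian framing $\tb(\Lambda_n)=-n$. Since an embedded annulus on the unknot with framing $-n$ has boundary the anti-parallel $(2,-2n)$--torus link, and an annulus in $S^3$ is determined up to isotopy by its (unknotted) core together with its framing, $R(\Lambda_n)$ is topologically $H_n$. For the non-looseness of $\bdry R(\Lambda_n)$, Lemma~\ref{lem:unknotTRP} gives that the non-loose Legendrian unknot $\Lambda_n$ has the TRP, and then Theorem~\ref{thm:main}, applied to the knot $\Lambda_n$ regarded as a connected Legendrian graph, yields that $\bdry R(\Lambda_n)$ is non-loose.

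\emph{Converse.} Suppose $R(\Lambda)\cong H_n$ and $\bdry R(\Lambda)$ is non-loose. Since $R(\Lambda)$ is an annulus and deformation retracts onto $\Lambda$, the connected graph $\Lambda$ has first Betti number $1$, so it is a single embedded cycle $C$ with trees attached and deformation retracts onto $C$; as a subgraph of $\Lambda$, $C$ is a Legendrian knot. Because $R(\Lambda)$ deformation retracts onto $C$, the circle $C$ is a core of the annulus $R(\Lambda)\cong H_n$, hence a topological unknot, and its Legendrian framing---computed from the sub-ribbon $R(C)\subseteq R(\Lambda)$, which carries the framing of $H_n$---is $-n$, so $\tb(C)=-n$. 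Next I would show $C$ is non-loose: choosing $N(C)\subseteq N(\Lambda)$ so that $N(\Lambda)\setminus\Int N(C)$ is an $I$--invariant neighborhood of the convex surface $\bdry N(\Lambda)$ identifies the exteriors $(M_C,\xi_C)$ and $(M_\Lambda,\xi_\Lambda)$ up to contactomorphism, and Lemma~\ref{lem:loosenessofribbonsandspines} guarantees that $\bdry R(\Lambda)$ non-loose forces $\Lambda$, hence $R(\Lambda)$, non-loose; so $C$ is non-loose. Finally I would feed $\tb(C)=-n$ (with $n\neq 0$) into Theorem~\ref{thm:unknotclassification}: when $n>0$ we have $\tb(C)<0$, and since a non-loose Legendrian unknot in an overtwisted $S^3$ has $\tb\geq 1$, the structure $\xi$ must be $\xi_{std}$; when $n<0$ we have $\tb(C)>0$, which is impossible in a tight $S^3$ by the Bennequin inequality, and the only overtwisted $S^3$ carrying a non-loose Legendrian unknot is $(S^3,\xi_{1/2})$, so $\xi=\xi_{1/2}$. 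This realizes $\Lambda$ as deformation retracting onto the non-loose Legendrian unknot $C$ with $\tb=-n$ in the asserted $\xi$.

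The main obstacle I anticipate is the step in the converse where non-looseness is transferred from $\Lambda$ to its core $C$, i.e.\ the assertion that collapsing the trees of $\Lambda$ and thinning the ribbon annulus does not alter the contactomorphism type of the exterior. Making this precise requires arranging the nested regular neighborhoods so that $N(\Lambda)\setminus \Int N(C)$ is a genuine $I$--invariant collar of the convex boundary $\bdry N(\Lambda)$---so that any overtwisted disk in $M_C$ can be pushed off the collar into $M_\Lambda$---while keeping $\bdry N(C)$ convex with dividing set the transverse push-offs of $C$. The remaining steps (identifying $R(\Lambda_n)$ with $H_n$, the framing computation, and splitting the $n>0$ and $n<0$ cases) are routine, with the hypothesis $n\neq 0$ used precisely to keep the two cases $\tb(C)<0$ and $\tb(C)>0$ disjoint and exhaustive.
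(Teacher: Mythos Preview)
Your proposal is correct and follows essentially the same architecture as the paper: the forward direction via Lemma~\ref{lem:unknotTRP} plus Theorem~\ref{thm:main}, and the converse by reducing to a Legendrian unknot core and invoking the Eliashberg--Fraser classification (Theorem~\ref{thm:unknotclassification}).

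The one place you diverge is in how you produce the Legendrian unknot in the converse. You take the unique cycle $C$ as a \emph{subgraph} of $\Lambda$ and then argue that $N(\Lambda)\setminus\Int N(C)$ is an $I$--invariant collar, so the exteriors agree. The paper instead invokes Lemma~\ref{lem:legspinesofribbons}: any spine of a ribbon can be Legendrian realized with isotopic ribbon rel-$\bdry$, so one may simply \emph{replace} $\Lambda$ by a leafless Legendrian spine of $R(\Lambda)$---necessarily a Legendrian unknot with $\tb=-n$---whose ribbon is literally $R(\Lambda)$. That sidesteps the collar argument entirely: since the new unknot has the same ribbon boundary $\bdry R(\Lambda)$, non-looseness of $\bdry R(\Lambda)$ feeds directly into Lemma~\ref{lem:loosenessofribbonsandspines}. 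Your route is fine, but the obstacle you flag (making the collar genuinely $I$--invariant while keeping $\bdry N(C)$ convex with the right dividing set) is exactly what Lemma~\ref{lem:legspinesofribbons} packages; citing it would let you drop that paragraph.
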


\begin{proof}
Since $\tb(\Lambda_n) = -n$, a positive push-off of $\Lambda_n$ has linking number $-n$ with $\Lambda_n$.  Thus, if they were oriented coherently, the components of $\bdry R(\Lambda_n)$ would be the $(2,-2n)$ torus link.  Hence  $R(\Lambda_n)$ is the generalized Hopf band $H_n$.
The tightness of the complement of the transverse link $\bdry R(\Lambda_n)$ follows from Theorem~\ref{thm:main} since $\Lambda_n$ has the TRP as discussed in Lemma~\ref{lem:unknotTRP}.

For the converse, suppose for some Legendrian graph $\Lambda$ in $(S^3, \xi)$ that $\bdry R(\Lambda)$ is non-loose and $R(\Lambda)$ is isotopic to the generalized Hopf band $H_n$.  As the deformation retract of $\Lambda$ is also a spine for $R(\Lambda)$, Lemma~\ref{lem:LegRealizationGraphs} shows we may assume $\Lambda$ has no leaves.  Thus, as a spine of $H_n$, we may assume $\Lambda$ is a Legendrian unknot with $tb=-n$.  Since $\bdry R(\Lambda)$ is non-loose, Lemma~\ref{lem:loosenessofribbonsandspines} implies that $\Lambda$ is non-loose.  The result now follows from the classification of non-loose Legendrian unknots, Theorem~\ref{thm:unknotclassification}.
\end{proof}

\begin{remark}
    When $n\geq 3$, we have multiple transverse generalized Hopf links representing the same topological generalized Hopf link $\bdry H_n$.  These correspond to having multiple Legendrian isotopy classes of unoriented Legendrian unknots, see Corollary~\ref{cor:unknotclassificationunoriented}.  They can be distinguished by looking at the self-linking of their individual components.
    
    When $n \leq -1$, there is a unique (up to contactomorphism)  unoriented non-loose $\Lambda_n$ and hence unique (up to contactomorphism)  non-loose transverse generalized Hopf link representing $\bdry H_n$.
\end{remark}

\subsection{Double twist knots and further Murasugi sums}
 
Define $H_{m,n}$ to be the  once-punctured torus embedded in $S^3$ that is the plumbing of the generalized Hopf bands $H_m$ and $H_n$ as in Figure~\ref{fig:HopfandHopfPlumbing}(Right).
The  knots $K_{m,n}=\bdry H_{m,n}$ are known as {\em double twist knots} for $m,n\neq 0$.
If either $m=0$ or $n=0$, then  $H_{m,n}$ compresses and $K_{m,n}$ is the unknot.  Otherwise $K_{m,n}$ is a genus $1$ knot and $H_{m,n}$ is an incompressible Seifert surface.  This is the unique incompressible Seifert surface if $m=\pm1$ or $n=\pm1$. When $|m|>1$ and $|n|>1$, there are two non-isotopic ones related by changing the plumbing disk.  (Since the double twist knots are two-bridge knots, this classification of incomporessible Seifert surfaces follows from \cite{HT-2bridge} for example.)

For $n\neq 0$, let $\Lambda_n$ be a non-loose Legendrian unknot with $tb=-n$ in $(S^3, \xi)$.  
Then for $m\neq0$ and $n\neq0$, we define $\Lambda_{m,n}$ to be a Legendrian graph $\Lambda_m * \Lambda_n$ that is a fusion of $\Lambda_m$ and $\Lambda_n$ of order $2$.  Examples of such fusions are depicted via Legendrian surgery description in Figure~\ref{fig:Legendrianplumbingspine}.  Topologically, $H_{m,n} = R(\Lambda_{m,n})$.  

\begin{figure}
    \centering
    \includegraphics[width=\textwidth]{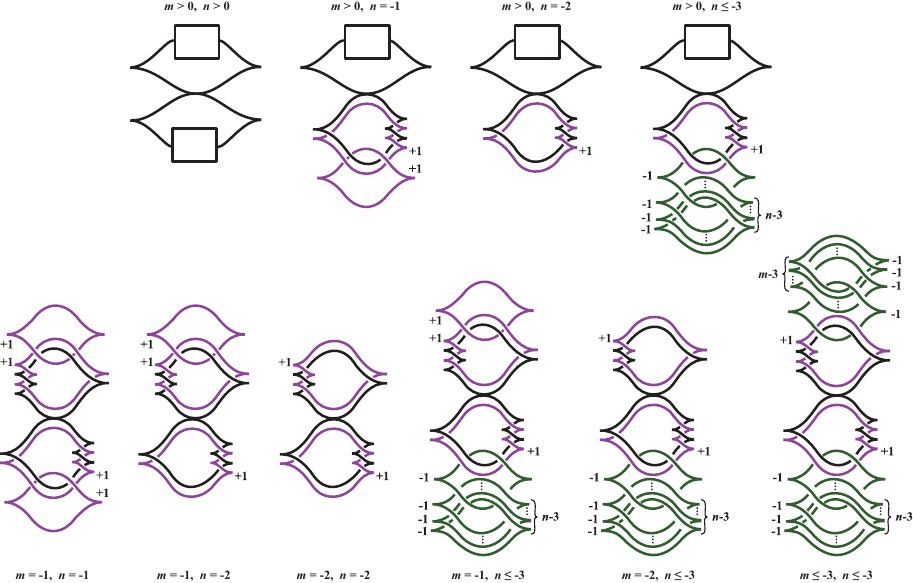}
    \caption{Legendrian surgery descriptions of examples of Legendrian graphs $\Lambda_{m,n}$}
    \label{fig:Legendrianplumbingspine}
\end{figure}

\begin{theorem}\label{thm:doubletwistknots}
A Legendrian graph $\Lambda_{m,n}$ for non-zero integers $m, n$
 is a non-loose Legendrian graph in $(S^3, \xi)$ for 
 \begin{itemize}
     \item $\xi = \xi_{std}$ if $m, n>0$,
    \item $\xi=\xi_{1/2}$ if $m>0>n$ or $n>0>m$, and
    \item $\xi= \xi_{3/2}$ if $0>m, n$.
 \end{itemize}

Furthermore, the transverse double twist knot $K_{m,n} = \bdry R(\Lambda_{m,n})$ is non-loose. 
\end{theorem}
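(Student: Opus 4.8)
The plan is to assemble the theorem from the TRP machinery together with a $d_3$-invariant computation. Recall first that, by construction, $\Lambda_{m,n} = \Lambda_m \ast \Lambda_n$ is a Legendrian fusion of order $2$ of the non-loose Legendrian unknots $\Lambda_m$ and $\Lambda_n$ (each marked with a valence~$2$ vertex at the fusion point), and that, as noted just before the statement, $R(\Lambda_{m,n})$ is topologically the plumbing $H_{m,n}$: indeed Lemma~\ref{lem:ribbonmurasugi} identifies $R(\Lambda_{m,n})$ as a Murasugi sum of $R(\Lambda_m)$ and $R(\Lambda_n)$, which are the generalized Hopf bands $H_m$ and $H_n$ by Theorem~\ref{thm:genhopf}. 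In particular $\bdry R(\Lambda_{m,n})$ is a transverse representative of the double twist knot $K_{m,n}$.

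First I would establish the TRP for $\Lambda_{m,n}$. Each of $\Lambda_m$ and $\Lambda_n$ is a non-loose Legendrian unknot, hence has the TRP by Lemma~\ref{lem:unknotTRP}; so by Theorem~\ref{thm:murasugisums} the fusion $\Lambda_{m,n}$ has the TRP. Since a Legendrian graph with the TRP has tight exterior (its exterior embeds in a tight manifold, as observed right after the definition of the TRP), $\Lambda_{m,n}$ is non-loose. Applying Theorem~\ref{thm:main} to $\Lambda_{m,n}$ then shows that the transverse knot $\bdry R(\Lambda_{m,n}) = K_{m,n}$ is non-loose as well.

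It remains to pin down the ambient contact structure. By the definition of Legendrian fusion, $\Lambda_{m,n}$ lives in the contact connected sum of the two ambient manifolds, i.e.\ in $(S^3 \# S^3, \xi_a \# \xi_b) \cong (S^3, \xi_a \# \xi_b)$, where by Theorem~\ref{thm:genhopf} the summand corresponding to $\Lambda_m$ is $(S^3,\xi_{std})$ with $d_3 = -\tfrac12$ when $m>0$ and $(S^3,\xi_{1/2})$ with $d_3 = \tfrac12$ when $m<0$, and similarly for $\Lambda_n$. Using additivity of the $d_3$-invariant under connected sum, $d_3(\xi_1 \# \xi_2) = d_3(\xi_1) + d_3(\xi_2) + \tfrac12$ (the normalizing constant $+\tfrac12$ being fixed by $\xi_{std}\#\xi_{std} = \xi_{std}$), one gets $d_3 = -\tfrac12$ when $m,n>0$, $d_3 = \tfrac12$ when $m>0>n$ or $n>0>m$, and $d_3 = \tfrac32$ when $0>m,n$. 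Since $\xi_{std}$ is the unique tight contact structure on $S^3$ and the overtwisted ones are classified by $d_3$, these values identify $\xi$ as $\xi_{std}$, $\xi_{1/2}$, and $\xi_{3/2}$ respectively; alternatively one may read off the same $d_3$ values directly from the $(\pm1)$-Legendrian surgery diagrams of Figure~\ref{fig:Legendrianplumbingspine}.

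The argument is essentially a direct assembly of earlier results, so there is no single difficult step; the only place requiring care is the $d_3$ bookkeeping — keeping straight which connected-sum summand carries which value of $d_3$ (and, on the surgery-diagram route, correctly evaluating the $d_3$ formula for the $(\pm1)$-surgery presentations). One should also note explicitly that the fusion summands here are copies of $S^3$, so that $S^3 \# S^3 \cong S^3$ and the conclusion really is a statement about contact structures on $S^3$.
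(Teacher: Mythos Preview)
Your proof is correct and follows essentially the same route as the paper: invoke Lemma~\ref{lem:unknotTRP} to get the TRP for $\Lambda_m$ and $\Lambda_n$, apply Theorem~\ref{thm:murasugisums} to obtain the TRP for the fusion $\Lambda_{m,n}$, and then use Theorem~\ref{thm:main} for the non-looseness of $\bdry R(\Lambda_{m,n})$, while identifying the ambient contact structure via the connected-sum description. Your write-up is in fact a bit more explicit than the paper's in two places --- you spell out the $d_3$-additivity computation rather than simply asserting the resulting contact structure, and you separately note that the TRP makes $\Lambda_{m,n}$ itself non-loose --- but these are elaborations of the same argument, not a different one.
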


\begin{proof}
By Theorem~\ref{thm:unknotclassification}, $\Lambda_n$ is a Legendrian unknot in $(S^3, \xi)$ where $\xi=\xi_{std})$ if and only if $n>0$ while $\xi=\xi_{1/2}$ if and only if $n<0$.  As $\Lambda_{m,n}$ is in the connected sum of the contact manifolds containing $\Lambda_m$ an $\Lambda_n$, we obtain the classification of contact manifolds containing $\Lambda_{m,n}$ as stated.

Since $\Lambda_m$ and $\Lambda_n$ have the TRP by Lemma~\ref{lem:unknotTRP},
 Theorem~\ref{thm:murasugisums} implies that $\Lambda_{m,n}$ also has the TRP.   Hence Theorem~\ref{thm:main} shows that $\bdry R(\Lambda_{m,n})$ is non-loose.  
\end{proof}

More generally, let $\mathcal{H}$ be the minimal set of Seifert surfaces in $S^3$ that contains the generalized Hopf bands $H_n$ for $n\neq 0$ and is closed under Murasugi sum. Thus, any $H\in \mathcal{H}$ Murasugi de-sums into a collection of generalized Hopf bands.  Let $\nu(H)$ be the number of generalized Hopf bands $H_n$ in this collection with $n<0$.
\begin{theorem}
    If $H \in \mathcal{H}$ with $\nu = \nu(H)$, then there is a Legendrian graph $\Lambda$ with the TRP in $(S^3, \xi)$ 
    where $\xi = \xi_{std}$ if $\nu=0$ and $\xi=\xi_{\nu-1/2}$ if $\nu >0$
    such that $R(\Lambda)$ is topologically isotopic to $H$ and the transverse link $\bdry R(\Lambda)$ is non-loose.
\end{theorem}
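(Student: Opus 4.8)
The plan is to induct on the number $k$ of generalized Hopf band summands occurring in a Murasugi de-sum of $H$. Because a Murasugi sum drops Euler characteristic by $1$ and $\chi(H_n)=0$, we have $k=1-\chi(H)$, so $k$ is an invariant of $H$ with $k\geq 1$, and in any factorization $H=H'\ast H''$ with $H',H''\in\mathcal H$ both factors have strictly smaller $k$. For the base case $k=1$ we have $H=H_n$ for some $n\neq 0$; Theorem~\ref{thm:genhopf} produces a non-loose Legendrian unknot $\Lambda_n$ with $\tb=-n$ in $(S^3,\xi_{std})$ if $n>0$ and in $(S^3,\xi_{1/2})$ if $n<0$, with $R(\Lambda_n)$ topologically isotopic to $H_n$ and $\bdry R(\Lambda_n)$ non-loose, and $\Lambda_n$ has the TRP by Lemma~\ref{lem:unknotTRP}. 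Since $\nu(H_n)$ equals $0$ when $n>0$ and $1$ when $n<0$, the ambient contact structure is $\xi_{std}$ in the first case and $\xi_{1/2}=\xi_{\nu-1/2}$ in the second, as required.

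For the inductive step write $H=H'\ast H''$ with $H',H''\in\mathcal H$, realized as a Murasugi sum along a $2r$-gon. By induction there are connected Legendrian ribbons $R(\Lambda')$ and $R(\Lambda'')$ with the TRP, topologically isotopic to $H'$ and $H''$, in $(S^3,\xi')$ and $(S^3,\xi'')$ respectively, with $\bdry R(\Lambda')$ and $\bdry R(\Lambda'')$ non-loose. Using Lemma~\ref{lem:legspinesofribbons}, choose the spines of these ribbons so that each Murasugi disk is a regular neighborhood of a valence-$r$ vertex; then the topological Murasugi sum $H'\ast H''$ is realized as the ribbon of the Legendrian fusion $\Lambda=\Lambda'\ast\Lambda''$ of order $r$, exactly as in Lemma~\ref{lem:ribbonmurasugi} and the Corollary following Theorem~\ref{thm:murasugisums}. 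Thus $R(\Lambda)$ is topologically isotopic to $H$, the graph $\Lambda$ is connected, it has the TRP by Theorem~\ref{thm:murasugisums}, and by Theorem~\ref{thm:main} the transverse link $\bdry R(\Lambda)$ is non-loose.

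It remains to pin down the contact structure. The fusion $\Lambda$ lives in the contact connected sum $(S^3\# S^3,\xi'\#\xi'')=(S^3,\xi'\#\xi'')$, and the chosen de-sum of $H$ restricts to de-sums of $H'$ and $H''$, so $\nu(H)=\nu(H')+\nu(H'')$. If $\nu(H)=0$ then $\nu(H')=\nu(H'')=0$, so $\xi'$ and $\xi''$ are tight by induction and $\xi'\#\xi''$ is the unique tight contact structure $\xi_{std}$ on $S^3$. If $\nu(H)>0$ then at least one summand is overtwisted, hence $\xi'\#\xi''$ is overtwisted, and using additivity of the $d_3$-invariant, $d_3(\xi'\#\xi'')=d_3(\xi')+d_3(\xi'')+\tfrac{1}{2}$, together with the inductive identity $d_3(\xi')=\nu(H')-\tfrac{1}{2}$ (which holds whether $\xi'$ is tight or overtwisted), we get $d_3(\xi'\#\xi'')=\nu(H')+\nu(H'')-\tfrac{1}{2}=\nu(H)-\tfrac{1}{2}$; since overtwisted contact structures on $S^3$ are classified by $d_3$, this forces $\xi'\#\xi''=\xi_{\nu(H)-1/2}$. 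This closes the induction.

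The main obstacle is the realization step in the inductive case: promoting the given abstract topological Murasugi sum to an honest Legendrian fusion of the inductively built ribbons. Concretely one must produce spines of $R(\Lambda')$ and $R(\Lambda'')$ carrying a vertex of the correct valence inside the respective Murasugi disk and then apply Legendrian realization (Lemma~\ref{lem:legspinesofribbons}); this is precisely what Lemma~\ref{lem:ribbonmurasugi} and the Corollary to Theorem~\ref{thm:murasugisums} package, so once those are granted the remainder is bookkeeping with connected sums and the $d_3$-invariant. One should also check that $\nu(H)$ genuinely depends only on $H$ and not on the de-sum chosen; this follows because Murasugi summing block-sums Seifert forms, so $\nu(H)$ is determined by $k=1-\chi(H)$ together with the signature of the Seifert form of $H$.
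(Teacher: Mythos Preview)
Your proof is correct and follows essentially the same route as the paper's: induct on the Murasugi de-sum, invoke Theorem~\ref{thm:genhopf} and Lemma~\ref{lem:unknotTRP} for the base case, use Lemma~\ref{lem:legspinesofribbons} to realize the summing disk at a vertex so that the Murasugi sum becomes a Legendrian fusion, then apply Theorem~\ref{thm:murasugisums} and Theorem~\ref{thm:main}, and finish with the $d_3$ bookkeeping on the connected sum. You are somewhat more careful than the paper in two places: you make the induction parameter $k=1-\chi(H)$ explicit (the paper simply inducts on a de-sum), and you supply an argument for the well-definedness of $\nu(H)$ via additivity of the Seifert form under Murasugi sum, which the paper leaves implicit.
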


\begin{proof}
If $H$ is just a generalized Hopf band, then this follows from Theorem~\ref{thm:genhopf}.  So now by induction, assume that $H$ is a Murasugi sum $H_1 \ast H_2$ of two surfaces $H_1$ and $H_2$ in $\mathcal{H}$ where $\nu(H_i) = \nu_i$.  Then $\nu(H)=\nu=\nu_1+\nu_2$.

Furthermore, by induction, $H_i$ is isotopic to ribbon $R_i$ of a Legendrian graph with the TRP in $(S^3, \xi_i)$ for each $i=1,2$.  Since any spine for $R_i$ may be Legendrian realized by Lemma~\ref{lem:legspinesofribbons}, let $\Lambda_i$ be one (for each $i=1,2$) so that the Murasugi sum $H_1 \ast H_2$ is realized as the ribbon of a Legendrian fusion $\Lambda_1 \ast \Lambda_2$.  Thus $H$ is isotopic to the ribbon $R(\Lambda_1 \ast \Lambda_2)$ in $(S^3, \xi_1 \# \xi_2)$.  As $\Lambda_i$ has the same exterior as $R_i$, it has the TRP too. Then by Theorem~\ref{thm:murasugisums} $\Lambda_1 \ast \Lambda_2$ has the TRP, and by Theorem~\ref{thm:main} $\bdry R(\Lambda_1 \ast \Lambda_2)$ is non-loose.

Finally, observe that $\nu = 0$ if and only if $\nu_1=\nu_2=0$ so 
$\xi_1 \# \xi_2 = \xi_{std}$ if $\nu=0$.  Otherwise, since at least one of $\xi_i$ is overtwisted, $\xi_1 \# \xi_2$ is overtwisted and the calculation $\xi_1 \# \xi_2=\xi_{\nu_1+\nu_2-1/2}=\xi_{\nu-1/2}$ follows.
\end{proof}

\begin{remark}
    Though a ribbon $R$ of a Legendrian graph $\Lambda$ may be topologically expressed as a Murasugi sum, it is not clear that some spine of $R$ may be Legendrian realized as a Legendrian fusion.  Furthermore, even if $\Lambda = \Lambda_1 \ast \Lambda_2$ is non-loose, it may be that $\Lambda_1$ or $\Lambda_2$ is loose.
\end{remark}

\section*{Acknowledgements}

 We thank Lev Tovstopyat-Nelip for helpful discussions, sharing the work in progress \cite{HTN}, and pointing out Remark~\ref{rem:trivialthat}.
SO also thanks both the Department of Mathematics and the Institute of the Mathematical Sciences of the Americas (IMSA) at the University of Miami for their hospitality while this work was developed and written.

KLB was partially supported by the Simons Foundation grant \#523883 and gift \#962034.
SO was partially supported by the Turkish Fulbright Commission, IMSA Visitor Program, and T\"UB\.ITAK 2219.

\bibliographystyle{alpha}
\bibliography{biblio}
\end{document}